\newtheorem{thm}{Theorem}[section]
\newtheorem{prop}[thm]{Proposition}
\newtheorem{lem}[thm]{Lemma}
\newtheorem{cor}[thm]{Corollary}
\theoremstyle{definition}
\newtheorem{definition}[thm]{Definition}
\theoremstyle{remark}
\newtheorem{notation}[thm]{Notation}
\newtheorem{remark}[thm]{Remark}
\numberwithin{equation}{section}
\newcommand{\id}{\mathrm{id}}
\newcommand{\Cmc}{\mathcal{C}} 
\newcommand{\Dmc}{\mathcal{D}} 
\newcommand{\Emc}{\mathcal{E}} 
\newcommand{\Fmc}{\mathcal{F}} 
\newcommand{\Gmc}{\mathcal{G}} 
\newcommand{\Hmc}{\mathcal{H}} 
\newcommand{\Imc}{\mathcal{I}} 
\newcommand{\Lmc}{\mathcal{L}} 
\newcommand{\Mmc}{\mathcal{M}} 
\newcommand{\Pmc}{\mathcal{P}} 
\newcommand{\Qmc}{\mathcal{Q}} 
\newcommand{\Tmc}{\mathcal{T}}
\newcommand{\Pbbb}{\mathbb{P}} 
\newcommand{\Rbbb}{\mathbb{R}} 
\newcommand{\Zbbb}{\mathbb{Z}} 
\newcommand{\PGL}{\mathrm{PGL}} 
\newcommand{\PSL}{\mathrm{PSL}} 
\newcommand{\SL}{\mathrm{SL}}
\begin{document}

\title{Deforming convex real projective structures}

%    author one information
\author{Anna Wienhard}
\address{Ruprecht-Karls Universit\"at Heidelberg, Mathematisches Institut, Im Neuenheimer Feld~288, 69120 Heidelberg, Germany
\newline HITS gGmbH, Heidelberg Institute for Theoretical Studies, Schloss-Wolfs\-brunnen\-weg 35, 69118 Heidelberg, Germany }
\email{wienhard@uni-heidelberg.de}
\thanks{AW was partially supported by the National Science Foundation under agreements DMS-1065919 and 0846408, by the Sloan Foundation, by the Deutsche Forschungsgemeinschaft, by the European Research Council under ERC-Consolidator grant 614733, and by the Klaus Tschira Foundation.}

%    author two information
\author{Tengren Zhang}
\address{Mathematics Department, California Institute of Technology, 1200 East California Boulevard, Mail Code 253-37, Pasadena, CA 91125}
\email{tengren@caltech.edu}
\thanks{}

%
%
%\author{Anna Wienhard, Tengren Zhang}
%
\maketitle

%%%%%%%%%%%%%%%%%%%%%%%%%%%%%%%%%%%%%%%%%%%%%%%%%%%%%%%%%%%%%%%%%%%%%%%%%%%%%%%%%%%%%%%%%%%%%%%%%%%%
\section{Introduction} 
%%%%%%%%%%%%%%%%%%%%%%%%%%%%%%%%%%%%%%%%%%%%%%%%%%%%%%%%%%%%%%%%%%%%%%%%%%%%%%%%%%%%%%%%%%%%%%%%%%%%
Let $S$ be a closed connected orientable surface of genus $g\geq2$. A convex real projective structure on $S$ is a locally homogeneous $\Rbbb\Pbbb^2$-structure on $S$ which induces a diffeomorphism of $S$ with a manifold $M = \Omega/\Gamma$, where $\Omega \subset \Rbbb\Pbbb^2$ is a convex domain, and $\Gamma$ is a discrete group of projective transformations which preserve $\Omega$. 
Any hyperbolic structure on $S$ gives rise to a convex real projective structure on $S$, by taking $\Omega$ to be the Klein-Beltrami model of the hyperbolic plane in $\Rbbb\Pbbb^2$. The deformation space $\mathcal{C}(S)$ of convex real projective structures thus contains the Fricke-Teichm\"uller space $\mathrm{Hyp}(S)$ of hyperbolic structures on $S$. 
The study of convex real projective structures on $S$ has been pioneered by Bill Goldman, who rigorously defined the deformation space of convex real projective structures on $S$ and showed that it is a cell of dimension  $16g-16$ \cite{Goldman_convex}. He proved this result by giving a precise parametrization of the space of convex real projective structures associated to a pair of pants decomposition of the surface, involving two length and two twist parameters for each curve in the pants decomposition and two internal parameters for each pair of pants. 
The holonomy map provides an  embedding $\mathrm{hol} : \mathcal{C}(S) \rightarrow \mathrm{Hom}(\pi_1(S), \PGL(3,\Rbbb))/\PGL(3,\Rbbb)$. Choi-Goldman \cite{ChoiGoldman} showed that the image of $\Cmc(S)$ is the Hitchin component $\mathrm{Hit}_3(S) \subset \mathrm{Hom}(\pi_1(S), \PGL(3,\Rbbb))/\PGL(3,\Rbbb)$, which was introduced by Hitchin in \cite{Hitchin}. The space of convex real projective structures thus provides the first example of a higher Teichm\"uller space, and the study of convex real projective structures has often been a model case for the study of more general Hitchin components \cite{Goldman_convex, Li, Labourie_convex, Loftin_convex, FockGoncharov_convex, Zhang_convex}.

A simple closed curve on $S$ gives rise to a twist flow on $\mathrm{Hyp}(S)$. More generally, given a pair of pants decomposition of $S$, the twist flows along the $3g-3$ curves in the pants decomposition give rise to $3g-3$ pairwise commuting flows on $\mathrm{Hyp}(S)$ where $g$ is the genus of $S$.
Generalizations of these twist flows have been defined by Goldman \cite{Goldman_twist} in for representations of $\pi_1(S)$ into general reductive Lie groups, and described  very explicitly as shear and bulging deformations for convex real projective structures \cite{Goldman_bulging}. In the Fricke-Teichm\"uller space the $3g-3$ pairwise commuting twist flows fill out a half dimensional space ($\mathrm{dim}{\mathrm Hyp}(S) = 6g-6$). For convex real projective structures however, the shear and bulging flows associated to the $3g-3$ pants curves  give only rise to $6g -6$ pairwise commuting flows in the the $16g-16$-dimensional space of convex real projective structures. Moreover it is easy to see that the twist flows do not change the $4g-4$ internal parameters associated to the pants in the pair of pants decomposition. 

In this article we introduce two new flows on the space of convex real projective structures, which are described by explicit deformations of the internal parameters associated to each pair of pants in a pants decomposition. 
We call these flows the eruption flow and the internal bulging flow associated to the pants. The eruption flows associated to the $2g-2$ pairs of pants commute with the $6g-6$ generalized twist flows associated to the curves in the pants decomposition. Thus we obtain the following 
\begin{thm}\label{mainthm}
Let $S$ be a closed oriented surface of genus $g\geq 2$, and  $\Cmc(S)$ the space of convex real projective structures on $S$. Let $\Pmc$ be a pair of pants decomposition of $S$ and $\Tmc$ an ideal triangulation of $S$ adapted to $\Pmc$. Then the $6g-6$ shearing and bulging flows associated to the $3g-3$ pants curves and the $2g-2$ eruption flows associated to the $2g-2$ pairs of pants give rise to a half-dimensional family of commuting flows on $\Cmc(S)$. 
\end{thm}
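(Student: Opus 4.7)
\emph{Plan.} The dimension count checks out: $(3g-3)+(3g-3)+(2g-2)=8g-8=\tfrac{1}{2}\dim\Cmc(S)$. So two things remain to be verified, namely that the $8g-8$ infinitesimal generators are linearly independent at every point of $\Cmc(S)$, and that they pairwise commute. My plan is to set up Fock--Goncharov-type coordinates on $\Cmc(S)$ adapted to the triangulation $\Tmc$, attaching two edge invariants to each edge of $\Tmc$ and one triple-ratio invariant to each of the $4g-4$ triangles of $\Tmc$, for a total of $16g-16$ coordinates, and to describe each flow as an explicit motion in these coordinates.

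\emph{Flow descriptions and easy cases.} In these coordinates, the shearing and bulging flows along a pants curve $C$ should act by translation in the two edge-invariant directions attached to $C$, leaving all other coordinates fixed; the eruption flow at a pair of pants $P$ should act by a coordinated translation in the triangle-invariant directions of the two triangles of $\Tmc$ inside $P$, while fixing every edge invariant of $\Tmc$ (in particular preserving the holonomy conjugacy classes of the three boundary components of $P$). Granted such descriptions, linear independence of the $8g-8$ infinitesimal generators is automatic, since each flow moves an independent set of coordinates. Commutation is also automatic in three of the four interaction types: shearing/bulging along disjoint curves commute by Goldman's general framework of twist flows along disjoint simple closed curves; eruptions at distinct pants commute because distinct pants of $\Pmc$ contain disjoint triangles of $\Tmc$; and eruption at $P$ commutes with shearing/bulging along any curve $C\not\subset\partial P$ for the same reason.

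\emph{The main obstacle.} What remains is the commutation of the eruption flow at $P$ with the shearing or bulging flow along a curve $C\subset\partial P$, and here I expect the main technical effort. By the coordinate descriptions above, this commutation reduces to verifying (i) that the eruption flow at $P$ preserves the edge invariants of $\Tmc$ along $\partial P$, and (ii) that the shearing and bulging flows along $C$ preserve the two triple ratios of the triangles inside $P$. Claim (ii) should follow from the very construction of the shearing and bulging flows, which modify only the gluing data along $C$ and leave the projective structure on each component of $S\setminus C$ intact. Claim (i) is the core of the argument. I would establish it by lifting the convex structure to the universal cover, realizing the two triangles of $\Tmc$ inside $P$ as an explicit configuration of four flags in $\Rbbb\Pbbb^2$, expressing the eruption deformation as an explicit one-parameter subgroup of $\PGL(3,\Rbbb)$ acting on this configuration, and checking directly that this action preserves the three peripheral holonomies of $\pi_1(P)$ as well as the edge invariants on $\partial P$. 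A parallel, more conceptual route is to identify the eruption flow with the Hamiltonian flow, relative to the Goldman symplectic form, of some conjugation-invariant function of the holonomy restricted to $\pi_1(P)$; commutativity with shearing and bulging along $C\subset\partial P$ would then follow from standard Poisson-commutation arguments, since the Hamiltonians generating the latter depend only on the holonomy around $C$.
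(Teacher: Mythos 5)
Your dimension count and the broad strategy (describe each flow in Fock--Goncharov/Bonahon--Dreyer coordinates and read off independence and commutation) are in the spirit of what the paper does, but there are two genuine gaps. The most serious is that you take the eruption flow as given and jump straight to its coordinate description. Defining $(E_i)_t$ on $\Cmc(S)$ at all is the main technical content of the theorem: the elementary eruption of Section \ref{deforming a triple of flags} must be applied simultaneously and $\pi_1(S)$-equivariantly to the \emph{infinitely many} lifts of the two triangles of $\Tmc$ inside $P_i$, and one must prove that this infinite composition converges to a new strictly convex domain with equivariant boundary map. A single one-parameter subgroup of $\PGL(3,\Rbbb)$ acting on one configuration of flags, as you propose, cannot implement this equivariantly. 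The convergence is delicate --- it is the content of Lemma \ref{technical lemma}, where the limiting flag at an endpoint of a closed edge is obtained from an infinite product $\prod_{k\geq 0}r^ku_tr^{-k}$ of conjugated unipotents, and it only works because the two triangles in each pair of pants are erupted with \emph{opposite} signs (otherwise the closed leaf equalities would be violated in the limit and no limit in $\Cmc(S)$ could exist). Your proposal contains no substitute for this step.

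Second, your key structural claim --- that the eruption flow at $P$ fixes \emph{every} edge invariant --- is false. It fixes the invariants of the non-closed edges and preserves the eigenvalues of the boundary holonomies (because $\tau_{\Delta_i}+\tau_{\Delta_i'}$ is constant along the flow), but the two shear invariants $\sigma_{e,x},\sigma_{e,y}$ attached to each closed edge $e\subset\partial P$ genuinely change: in the proof of Theorem \ref{main theorem} they move to the limits $\lim_{j\to\infty}\sigma_{\xi_j,\xi_j^*}(x,z,z',y)$, which in general differ from their initial values by an amount depending on the point of $\Cmc(S)$. Consequently your reduction of the hard commutation case (eruption at $P$ versus shearing/bulging along $C\subset\partial P$) fails at claim (i), and commutativity is not automatic from ``translations in disjoint sets of coordinates.'' The paper instead obtains commutativity at the elementary level: elementary eruptions, shears and bulges along non-crossing triangles and edges commute in $\PGL(3,\Rbbb)\backslash\Dmc$ (Propositions \ref{cross ratio facts} and \ref{triple ratio facts}), and this is passed to the infinite products. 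Your linear-independence argument survives only after repair: project first onto the $4g-4$ triangle invariants (where only the eruptions act, by $\pm 1$ on disjoint pairs) and then onto the closed-edge shear invariants. A related slip: two invariants per edge plus one per triangle gives $22g-22$ coordinates, not $16g-16$; they satisfy the $6g-6$ closed leaf equalities, and checking that each flow preserves these relations is exactly what forces the $(+t,-t)$ normalization of the eruption flow.
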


In a forthcoming paper, joint with Zhe Sun, we will extend Theorem~\ref{mainthm} to Hitchin components for $\PGL(n,\Rbbb)$, and discuss the structure of this family of flows with respect to the symplectic structure on $\mathrm{Hom}(\pi_1(S), \PGL(n,\Rbbb))/\PGL(n,\Rbbb)$. 

Twist flows along simple closed curves have been generalized by Thurston to earthquakes along measured laminations, leading to Thurston's celebrated earthquake theorem \cite{Kerckhoff, Thurston_earthquake}, that given any two points $X, X' \in \mathrm{Hyp}(S)$ there exists a unique measured lamination on $X$ such that $X'$ can be obtained from $X$ by a left earthquake along this lamination. 
It is a very interesting and challenging question whether there exists any generalization of the earthquake theorem in the context of higher Teichm\"uller spaces. This is wide open even for the space of convex real projective structures. Trying to develop an understanding of how a generalization of the earthquake theorem in the space of convex real projective structures could look like was our initial motivation to define the eruption flows we introduce in this article. 

Twist flows on $\mathrm{Hyp}(S)$ are intimitately linked with the cross ratio of four points in the boundary $\Rbbb\Pbbb^1$ of the hyperbolic plane. In the same way, the generalized twist flows on $\mathcal{C}(S)$ are closely related to the generalized cross ratios of quadruples of flags in $\Rbbb^3$, and the new eruption flows we defined are closely related to the triple ratio, a projective invariant of a triple of flags in $\Rbbb^3$. These invariants play an important role in the work of Fock-Goncharov \cite{FockGoncharov}, as well as in recent work of Bonahon-Dreyer \cite{BonahonDreyer1, BonahonDreyer2} and Zhang \cite{Zhang_convex, Zhang_internal} parametrizing the Hitchin components for $\PGL(n,\Rbbb)$. 

This paper is organized as follows. In Section 2, we recall properties of some classical projective invariants, namely the cross ratio and triple ratio. Then in Section 3, we recall the work of Fock-Goncharov, who used these projective invariants to give a parameterization of the space of $n$-tuples of positive flags in $\Rbbb^3$, and describe the relationship between such $n$-tuples of flags and suitably nested $n$-gons. Using this, we describe the eruption, shearing and bulging flows in the context of $n$-tuples of positive flags. We then extend these flows to the setting of marked strictly convex domains with $C^1$ boundary in Section 4. In Section 5, we recall some basic facts about convex $\Rbbb\Pbbb^2$ structures on surfaces. Finally in Section 6, we use the elementary eruption, shearing and bulging flows from Section 4 to define eruption, shearing and internal bulging flows on $\Cmc(S)$.
%the deformation space of convex $\Rbbb\Pbbb^2$ structures on $S$.

%%%%%%%%%%%%%%%%%%%%%%%%%%%%%%%%%%%%%%%%%%%%%%%%%%%%%%%%%%%%%%%%%%%%%%%%%%%%%%%%%%%%%%%%%%%%%%%%%%%%
\section{Projective invariants in $\Rbbb\Pbbb^2$} 
%%%%%%%%%%%%%%%%%%%%%%%%%%%%%%%%%%%%%%%%%%%%%%%%%%%%%%%%%%%%%%%%%%%%%%%%%%%%%%%%%%%%%%%%%%%%%%%%%%%%
In this section, we give some background on projective geometry, and recall the definition of some projective invariants, which play an important role throughout the article: the cross ratio and the triple ratio. 

We begin by defining a notion of genericity of points in $\Rbbb\Pbbb^2$.
\begin{definition}
An $n$-tuple of points $p_1,\dots,p_n\in\Rbbb\Pbbb^2$ is generic if no three of the points lie in a projective line. Denote the set of generic $n$-tuples of points in $\Rbbb\Pbbb^2$ by $\Pmc_n$.
\end{definition}

One can easily verify that $\PGL(3,\Rbbb)$ acts simply transitively on $\Pmc_4$. However, if we consider quadruples of (non-generic) pairwise distinct points $p_1, p_2, p_3, p_4$, which lie on a projective line, the projective group $\PGL(3,\Rbbb)$ does not act transitively, and the orbits are given by the projective cross-ratio. 

\subsection{Cross ratios} \label{sec:crossratio}
We describe the cross-ratio in the dual picture and consider first the following collection of projective lines in $\Rbbb\Pbbb^2$.

\begin{definition}
Let $\Lmc_n$ denote the set of pairwise distinct $n$-tuples $l_1,\dots,l_n\in(\Rbbb\Pbbb^2)^*$ for which there exist some $p\in\Rbbb\Pbbb^2$ so that $l_i(p)=0$ for all $i=1,\dots,n$.
\end{definition}

Each $l\in(\Rbbb\Pbbb^2)^*$ is a projective class of linear functionals in $\Rbbb^3$. By taking the kernels of these linear functionals, we obtain a natural identification between $(\Rbbb\Pbbb^2)^*$ and the space of projective lines in $\Rbbb\Pbbb^2$. With this identification, the condition $l(p)=0$ means that the point $p$ lies in the projective line $l$. Hence $\Lmc_4$ identifies with the set of quadruples of lines which intersect in a point. 
We will blur the distinction between projective classes of linear functionals on $\Rbbb^3$ and projective lines in $\Rbbb\Pbbb^2$; it should be clear from the context which we are referring to. 

Observe that $\PGL(3,\Rbbb)$ acts on $\Lmc_4$. but  not transitively. The $\PGL(3,\Rbbb)$-orbits in $\Lmc_4$ can be characterized by the following function, called the cross ratio.

\begin{definition}
The \emph{cross ratio} is the function $C:\Lmc_4\to\Rbbb\setminus\{0,1\}$ defined by
\[C:(l_1,l_2,l_3,l_4)\mapsto\frac{l_1(p_3)\cdot l_4(p_2)}{l_1(p_2)\cdot l_4(p_3)},\]
where $p_2,p_3\in\Rbbb\Pbbb^2\setminus\{p\}$ are points that lie on $l_2$ and $l_3$ respectively, and $p$ is the common intersection point of $l_1,\dots,l_4$.
\end{definition}

In the above formula for $C$, we choose covectors $\alpha_1$, $\alpha_4$ and vectors $v_2,v_3$ in the projective classes of $l_1$, $l_4$ and $p_2$, $p_3$ respectively to evaluate the pairings $l_i(p_j):=\alpha_i(v_j)$. It is easy to check that $C(l_1,l_2,l_3,l_4)$ does not depend on these choices of vectors and covectors, nor does it depend on the choice of $p_2$ or $p_3$. For our purposes, we will also use the notation 
\[C(l_1,p_2,p_3,l_4):=C(l_1,l_2,l_3,l_4)\] 
for points $p_2\in l_2$ and $p_3\in l_3$. The next proposition states some properties of the cross ratio, which are well-known and easily verified.

\begin{prop} Let $(l_1,l_2,l_3,l_4)\in\Lmc_4$. The following statements hold.
\begin{enumerate}
\item $\displaystyle C(l_1,l_2,l_3,l_4)=\frac{1}{C(l_1,l_3,l_2,l_4)}=1-C(l_2,l_1,l_3,l_4)=C(l_4,l_3,l_2,l_1)$.
\item For any $g\in\PGL(3,\Rbbb)$, $\displaystyle C(l_1,l_2,l_3,l_4)=C(g\cdot l_1,g\cdot l_2,g\cdot l_3,g\cdot l_4)$.
\item $C$ is surjective, and its level sets are the $\PGL(3,\Rbbb)$-orbits in $\Lmc_4$, each of which is isomorphic to $\PGL(3,\Rbbb)$ as $\PGL(3,\Rbbb)$-sets.
\end{enumerate}
\end{prop}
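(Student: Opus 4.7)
My plan is to tackle the three items in order, with (2) serving as a tool for the others and each reduction relying on a single normalization computation.

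First I would prove (2), which is the workhorse. Lifting $g \in \PGL(3,\Rbbb)$ to $\tilde g \in \GL(3,\Rbbb)$, the induced action on $(\Rbbb\Pbbb^2)^*$ is $\alpha \mapsto \alpha \circ \tilde g^{-1}$ while the action on $\Rbbb\Pbbb^2$ is $v \mapsto \tilde g v$. Every pairing in the defining formula therefore transforms as $(\alpha_i \circ \tilde g^{-1})(\tilde g v_j) = \alpha_i(v_j)$, so each factor is individually unchanged and invariance is a one-line verification.

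For (1), I would combine (2) with the elementary fact that $\PGL(3,\Rbbb)$ acts transitively on ordered triples $(l_1,l_2,l_3)$ of pairwise distinct concurrent lines (by duality this reduces to the triple transitivity of $\PGL(2,\Rbbb)$ on $\Rbbb\Pbbb^1$). I normalize so that the common point is $p=[0:0:1]$, the covectors representing $l_1,l_2,l_3$ are $(1,0,0), (0,1,0), (1,1,0)$, and necessarily $l_4=(1,t,0)$ for some $t\in\Rbbb\setminus\{0,1\}$. Choosing $p_2=[1:0:0]\in l_2$ and $p_3=[1:-1:0]\in l_3$, the formula gives $C(l_1,l_2,l_3,l_4)=1/(1-t)$. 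Each of the three identities is then a short algebraic check: swapping $l_2$ and $l_3$ inverts $C$ (and corresponds to the substitution $t \mapsto t/(t-1)$); swapping $l_1$ and $l_2$ produces $1-C$ by the same computation with the roles interchanged; and reversing the quadruple simultaneously exchanges $l_1 \leftrightarrow l_4$ and $p_2 \leftrightarrow p_3$ in the formula, which manifestly leaves the quotient unchanged.

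For (3), surjectivity is immediate from the parametrization: $t \mapsto 1/(1-t)$ is a bijection of $\Rbbb\setminus\{0,1\}$ onto itself. That the level sets are precisely the $\PGL(3,\Rbbb)$-orbits follows by combining (2) with the normalization: any two quadruples with the same cross ratio can both be placed in the standard form above, after which the common value of $C$ forces the same parameter $t$ and hence the same $l_4$, so the quadruples lie in one orbit. The step I expect to require the most care is the final assertion that each orbit is isomorphic to $\PGL(3,\Rbbb)$ as a $\PGL(3,\Rbbb)$-set. This is equivalent to computing the stabilizer of a quadruple in $\Lmc_4$: a stabilizing element must fix $p$ (the unique common intersection of any two of the lines), and the induced action on the pencil of lines through $p$ (via $\PGL(2,\Rbbb)$) must fix four distinct directions. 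I would identify the stabilizer explicitly in the standard coordinates and verify that the orbit map $\PGL(3,\Rbbb) \to \text{orbit}$ is a bijection of $\PGL(3,\Rbbb)$-sets; this rigidity check is where the real work lies, in contrast to the essentially algebraic verifications for parts (1) and (2).
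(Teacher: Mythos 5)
The paper gives no argument for this proposition (it is introduced as ``well-known and easily verified''), so there is no official proof to compare against; your normalization strategy is the natural one, and parts (1) and (2), the surjectivity claim in (3), and the identification of the level sets of $C$ with the $\PGL(3,\Rbbb)$-orbits are all handled correctly.

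The step that fails is exactly the one you flagged as needing the most care. Fixing $p$ and acting trivially on the pencil of lines through $p$ does \emph{not} force an element of $\PGL(3,\Rbbb)$ to be the identity: the homomorphism $\mathrm{Stab}(p)\to\PGL(2,\Rbbb)$ given by the action on the pencil has a nontrivial kernel. Concretely, in your coordinates ($p=[0:0:1]$, $l_i=(a_i,b_i,0)$) a matrix $A$ stabilizes all four lines iff $(a_i,b_i,0)A^{-1}\propto(a_i,b_i,0)$ for each $i$; the vanishing of the third coordinate forces the $(1,3)$ and $(2,3)$ entries of $A^{-1}$ to be zero, and the four distinct classes $[a_i:b_i]\in\Rbbb\Pbbb^1$ force the upper-left $2\times 2$ block to be scalar, so the stabilizer is
\[H=\left\{\begin{pmatrix}1&0&0\\0&1&0\\c&d&e\end{pmatrix}:c,d\in\Rbbb,\ e\neq0\right\},\]
a $3$-dimensional group. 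A dimension count confirms this: $\Lmc_4$ is $6$-dimensional, so a level set of $C$ is $5$-dimensional and cannot be in $\PGL(3,\Rbbb)$-equivariant bijection with the $8$-dimensional group itself. Hence the final clause of part (3) is false as stated, and your proposed verification that the orbit map is a bijection cannot be completed. (The analogous clause in Proposition~\ref{triple ratio level set} is correct, because a pairwise transverse triple of flags really does have trivial stabilizer; the clause appears to have been carried over from that setting.) Your method --- compute the stabilizer explicitly --- is the right one; carried out honestly it shows each orbit is $\PGL(3,\Rbbb)/H$ rather than $\PGL(3,\Rbbb)$, and your write-up should record that corrected statement or omit the clause.
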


The following proposition, whose proof is an elementary computation, allows to define the cross ratio of four points in $\Rbbb\Pbbb^2$ that lie in a projective line. 

\begin{prop}
Let $p_1,p_2,p_3,p_4\in\Rbbb\Pbbb^2$ be four pairwise distinct points in a projective line $l$. Let $q,q'$ be a pair of distinct points that do not lie in $l$, and for all $i=1,\dots,4$, let $l_i$ (resp. $l_i'$) be the projective lines through $q$ and $p_i$ (resp. $q'$ and $p_i$) respectively. Then
\[C(l_1,l_2,l_3,l_4)=C(l_1',l_2',l_3',l_4').\]
\end{prop}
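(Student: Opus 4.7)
The plan is to show that in the formula for the cross ratio, the linear functionals representing $l_i$ and $l_i'$ agree up to a scalar when restricted to the $2$-plane lifting the line $l$, so that the scaling ambiguity cancels in the ratio. Let $V\subset\Rbbb^3$ be the $2$-dimensional subspace whose projectivization is $l$, and fix lifts $\widetilde{p_i}\in V$ of each $p_i$, together with lifts $\widetilde{q},\widetilde{q}'\in\Rbbb^3$ of $q$ and $q'$. Let $\alpha_i\in(\Rbbb^3)^*$ represent $l_i$; by definition $\alpha_i$ vanishes on $\widetilde{q}$ and $\widetilde{p_i}$. Similarly let $\alpha_i'\in(\Rbbb^3)^*$ represent $l_i'$, vanishing on $\widetilde{q}'$ and $\widetilde{p_i}$.

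The key observation is that $\alpha_i|_V$ is a linear functional on the $2$-plane $V$ whose kernel contains $\widetilde{p_i}$, so it is determined by $\widetilde{p_i}$ up to a nonzero scalar. The same reasoning applies to $\alpha_i'|_V$, and therefore there exist constants $c_i\in\Rbbb^*$ with $\alpha_i|_V = c_i\,\alpha_i'|_V$ for each $i=1,\dots,4$. Since $p_2,p_3\in l$, the lifts $\widetilde{p_2},\widetilde{p_3}$ belong to $V$, so this relation applies when we evaluate each functional on them.

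Choosing $P_2=\widetilde{p_2}$ and $P_3=\widetilde{p_3}$ (permissible because $p_2,p_3\neq q,q'$, as these points lie on $l$ while $q,q'$ do not), the formula gives
\[
C(l_1,l_2,l_3,l_4)=\frac{\alpha_1(\widetilde{p_3})\,\alpha_4(\widetilde{p_2})}{\alpha_1(\widetilde{p_2})\,\alpha_4(\widetilde{p_3})}=\frac{c_1c_4\,\alpha_1'(\widetilde{p_3})\,\alpha_4'(\widetilde{p_2})}{c_1c_4\,\alpha_1'(\widetilde{p_2})\,\alpha_4'(\widetilde{p_3})}=C(l_1',l_2',l_3',l_4'),
\]
the scalars $c_1,c_4$ cancelling. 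One must check the denominators are nonzero, i.e.\ that $\alpha_1,\alpha_4$ do not vanish on $\widetilde{p_2}$ or $\widetilde{p_3}$; but $\alpha_1(\widetilde{p_2})=0$ would force $p_2\in l_1$, hence $q\in\overline{p_1p_2}=l$, contradicting $q\notin l$, and the other three cases are analogous.

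I do not expect a genuinely hard step here: the entire content is the observation that the restriction-to-$V$ trick packages the projective invariance cleanly. An alternative would be a direct coordinate computation, fixing a basis of $\Rbbb^3$ with $\widetilde{p_1},\widetilde{p_4}\in V$ and writing everything out, but it would obscure the geometric reason (both constructions are two views of the same pencil cut by the transversal $l$) that makes the scalars drop.
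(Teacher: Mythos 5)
Your proof is correct, and the paper gives no argument of its own here (it simply calls the proof ``an elementary computation''), so there is nothing to contrast with. Your restriction-to-$V$ argument is a clean, complete version of that computation: the only point worth making explicit is that $\alpha_i|_V\neq 0$ (which holds because $l_i\neq l$, since $q\in l_i$ but $q\notin l$), and your nonvanishing check at the end already supplies this for the two functionals you actually use.
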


We set 
\[C(p_1,p_2,p_3,p_4)=C(l_1,l_2,l_3,l_4).\]

Recall that a \emph{properly convex domain} $\Omega\subset\Rbbb\Pbbb^2$ is an open subset such that  for any pair of points $p,q\in\Omega$, there is a projective line segment between $p$ and $q$ that lies in $\Omega$, and the closure of $\Omega$ in $\Rbbb\Pbbb^2$ does not contain any projective lines. A properly convex domain $\Omega$ is \emph{strictly convex} if $\partial\Omega$ does not contain any line segments. The cross ratio of four points in a projective line allows us to define the Hilbert-metric on properly convex domains: 

\begin{definition}\label{def:Hilbert}
Let $\Omega\subset\Rbbb\Pbbb^2$ be a properly convex domain. For any pair of points $p,q\in\Omega$, let $a,b\in\partial\Omega$ be the points so that $a,p,q,b$ lie on a projective line in $\Rbbb\Pbbb^2$ in that order. The \emph{Hilbert metric} is the function $d_\Omega:\Omega^2\to\Rbbb$ given by 
\[d_\Omega(p,q)=\log|C(a,p,q,b)|.\]
\end{definition} 

It is easy to verify that $d_\Omega$ defines a metric on $\Omega$, and the projective invariance of the cross ratio implies that $d_\Omega$ is invariant under any projective transformation that leaves $\Omega$ invariant. Also, if $\Omega$ is strictly convex, then $d_\Omega$ is uniquely geodesic, i.e. there is a unique geodesic of $d_\Omega$ between any two points in $\Omega$, which is the projective line segment between them. Furthermore, if $\Omega$ is strictly convex, then $(\Omega, d_\Omega)$ is a $\delta$-hyperbolic metric space for some $\delta$. 

\subsection{Triple ratios} In this section we describe the triple ratio, which is a  projective invariant of  triples of pairwise transverse flags.

\begin{definition}
A \emph{flag} is a pair $(p,l)\in\Rbbb\Pbbb^2\times(\Rbbb\Pbbb^2)^*$ so that $l(p)=0$. Two flags $(p_1,l_1)$, $(p_2,l_2)$ are \emph{transverse} if $l_1(p_2)\neq 0\neq l_2(p_1)$. Let $\Fmc_n$ denote the set of ordered, pairwise transverse $n$-tuple of flags. 
\end{definition}

Under the identification of $(\Rbbb\Pbbb^2)^*$ with the space of projective lines in $\Rbbb\Pbbb^2$, $(p_1,l_1)$ and $(p_2,l_2)$ are transverse if and only if $p_1$ does not lie in $l_2$ and $p_2$ does not lie in $l_1$. 

\begin{definition}
The \emph{triple ratio} is the function $T:\Fmc_3\to\Rbbb\setminus\{0\}$ given by
\[T:\big((p_1,l_1),(p_2,l_2),(p_3,l_3)\big)\mapsto\frac{l_1(p_2)\cdot l_2(p_3)\cdot l_3(p_1)}{l_1(p_3)\cdot l_3(p_2)\cdot l_2(p_1)}.\]
\end{definition}

Just as we did in the case of cross ratios, we choose covectors $\alpha_1,\alpha_2,\alpha_3$ and vectors $v_1,v_2,v_3$ in the projective classes $l_1,l_2,l_3$ and $p_1,p_2,p_3$ respectively to evaluate $l_i(p_j):=\alpha_i(v_j)$. It is easy to verify that the triple ratio does not depend on any of these choices. 
The $\PGL(3,\Rbbb)$-orbits in $\Fmc_3$ can be described as the level sets of the triple ratio. The next proposition states some easily verified properties of the triple ratio.

\begin{prop}\label{triple ratio level set}
Let $(F_1,F_2,F_3)\in\Fmc_3$. The following statements hold.
\begin{enumerate}
\item $\displaystyle T(F_1,F_2,F_3)=\frac{1}{T(F_1,F_3,F_2)}$.
\item For any $g\in\PGL(3,\Rbbb)$, $T(F_1,F_2,F_3)=T(g\cdot F_1,g\cdot F_2,g\cdot F_3)$.
\item The triple ratio is surjective, and its level sets are the $\PGL(3,\Rbbb)$-orbits in $\Fmc_3$, each of which is isomorphic to $\PGL(3,\Rbbb)$ as $\PGL(3,\Rbbb)$-sets.
\end{enumerate}
\end{prop}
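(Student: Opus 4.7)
The plan is to verify parts (1) and (2) by direct algebraic computation and to treat (3) by the standard normalization strategy. For (1), interchanging the indices $2$ and $3$ in the defining formula swaps each factor of the numerator with the corresponding factor of the denominator, yielding the reciprocal. For (2), I lift $g\in\PGL(3,\Rbbb)$ to $\widetilde g\in\mathrm{GL}(3,\Rbbb)$; then $g\cdot p_j$ is represented by $\widetilde g v_j$ and $g\cdot l_i$ by the contragredient action $\alpha_i\circ\widetilde g^{-1}$, so $(g\cdot l_i)(g\cdot p_j)=\alpha_i(v_j)=l_i(p_j)$ and every factor in the triple ratio formula is preserved. The scalar ambiguities in the lift and in the choice of representative vectors and covectors cancel in the ratio, since each $l_i$ and each $p_j$ appears exactly once in the numerator and once in the denominator.

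For the substance of (3), the strategy is to use the $\PGL(3,\Rbbb)$-action to put the triple into a normal form, compute $T$ there, and analyze the residual symmetries. Assuming $p_1,p_2,p_3$ are in general position, I apply a projective transformation sending them to the standard basis points $e_1,e_2,e_3\in\Rbbb\Pbbb^2$. Pairwise transversality then forces $l_i$ to be a line through $p_i$ avoiding the other two, so the flags can be uniquely written as
\[l_1=[0:1:\gamma_1],\quad l_2=[\gamma_2:0:1],\quad l_3=[1:\gamma_3:0]\]
for scalars $\gamma_i\in\Rbbb\setminus\{0\}$. A short substitution into the definition gives $T(F_1,F_2,F_3)=(\gamma_1\gamma_2\gamma_3)^{-1}$, which is manifestly surjective onto $\Rbbb\setminus\{0\}$.

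The residual stabilizer of $(p_1,p_2,p_3)$ in $\PGL(3,\Rbbb)$ is the two-torus of diagonal matrices modulo scalars, and $\mathrm{diag}(t_1,t_2,t_3)$ sends $(\gamma_1,\gamma_2,\gamma_3)$ to $((t_2/t_3)\gamma_1,(t_3/t_1)\gamma_2,(t_1/t_2)\gamma_3)$. This action preserves the product $\gamma_1\gamma_2\gamma_3$, is transitive on each level set of the product, and has trivial isotropy. Combined with transitivity of $\PGL(3,\Rbbb)$ on generic point triples, this simultaneously shows that triples with equal triple ratio lie in a common $\PGL(3,\Rbbb)$-orbit and that each such orbit is a $\PGL(3,\Rbbb)$-torsor. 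The main subtlety will be the degenerate case of collinear $p_i$: a separate normalization (e.g.\ placing them at $[1:0:0]$, $[0:1:0]$, $[1:1:0]$) gives $T\equiv -1$ on this locus regardless of the $l_i$, so the statement of (3) is to be read with the implicit restriction that the underlying points are in general position, which is automatic for the positive flag configurations of interest later in the paper.
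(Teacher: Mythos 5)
Your proof is correct. The paper offers no argument for this proposition (it is introduced as a list of ``easily verified properties''), so there is nothing to compare against except the standard verification, which is exactly what you carry out: (1) and (2) by inspection of the formula and cancellation of the scalar ambiguities, and (3) by normalizing the points to $e_1,e_2,e_3$, reading off $T=(\gamma_1\gamma_2\gamma_3)^{-1}$, and checking that the residual diagonal torus acts simply transitively on each level set of the product. Your computation of the torus action on $(\gamma_1,\gamma_2,\gamma_3)$ and of the trivial isotropy is right, and combining it with invariance from (2) does give that level sets are orbits and that each orbit is a $\PGL(3,\Rbbb)$-torsor on the stratum of non-collinear point triples. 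You are also right to flag the degenerate locus: $\Fmc_3$ as defined in the paper does contain pairwise transverse triples with collinear underlying points, on which $T\equiv -1$, so the level set $T^{-1}(-1)$ is literally a union of more than one orbit and (3) must be read with the general-position caveat (harmless for the positive configurations used later, since those have $T>0$ and correspond to nested triangles). One small observation you could add for completeness: in your normal form the three lines are concurrent precisely when $1+\gamma_1\gamma_2\gamma_3=0$, i.e.\ again exactly at $T=-1$, so the dual degeneration is automatically confined to the same exceptional level set and your torus argument already covers it; no separate treatment is needed there.
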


\begin{figure}[ht]
\centering
\includegraphics[scale=0.6]{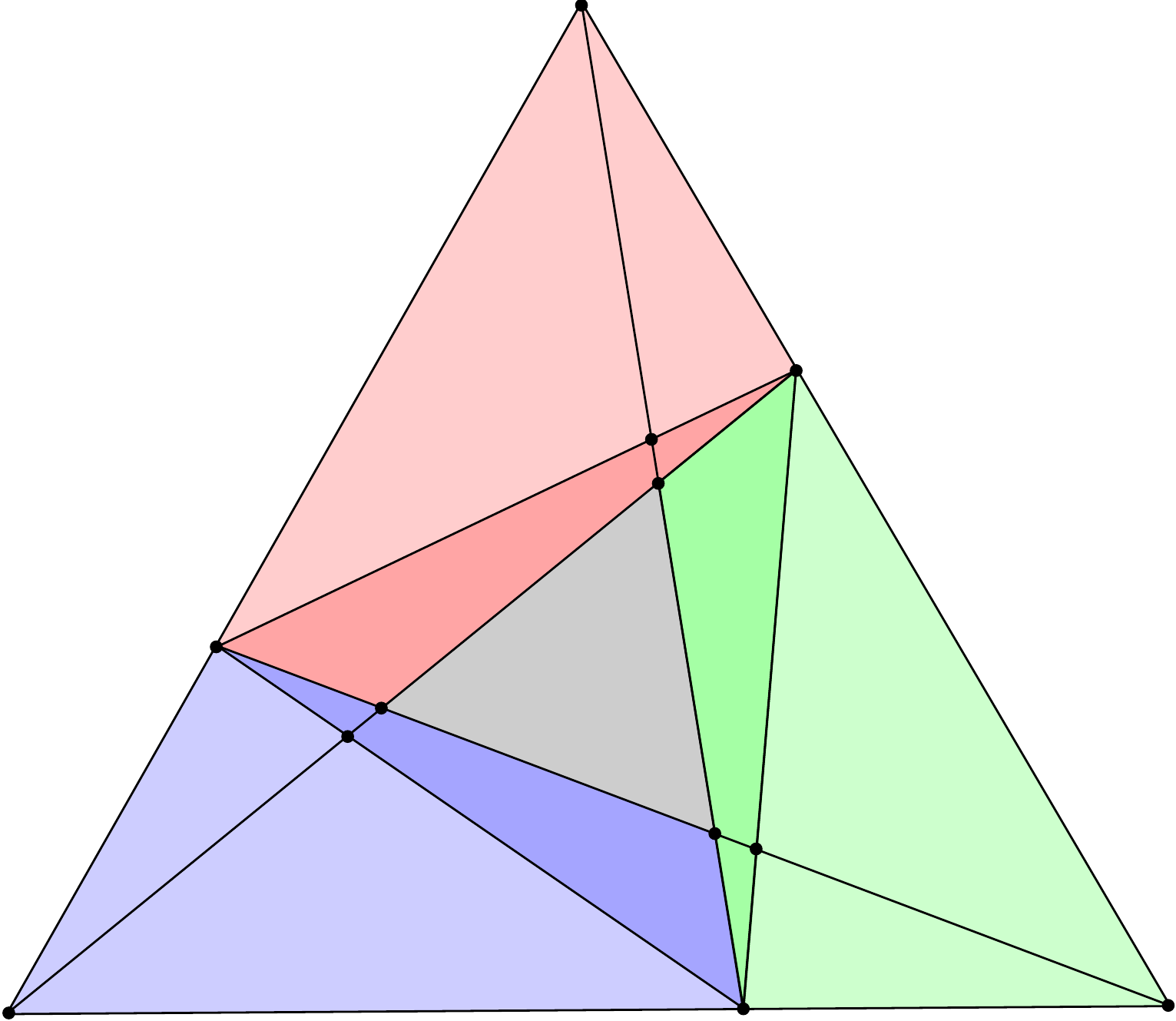}
\small
\put (-226, 85){$p_1$}
\put (-83, 147){$p_2$}
\put (-100, -3){$p_3$}
\put (-190, 145){$l_1$}
\put (-45, 80){$l_2$}
\put (-150, -5){$l_3$}
\put (-164, 36){$m_2$}
\put (-159, 120){$m_3$}
\put (-90, 90){$m_1$}
\put (0, 0){$q_1$}
\put (-273, -1){$q_2$}
\put (-140, 233){$q_3$}
\put (-67, 29){$w_1$}
\put (-215, 36){$w_2$}
\put (-137, 175){$w_3$}
\put (-93, 40){$r_1$}
\put (-190, 57){$r_2$}
\put (-129, 133){$r_3$}
\put (-115, 118){$u_1$}
\put (-115, 40){$u_2$}
\put (-185, 75){$u_3$}
\put (-75, 60){${\bf Q_1}$}
\put (-190, 20){${\bf Q_2}$}
\put (-160, 150){${\bf Q_3}$}
\put (-108, 100){${\bf T_1}$}
\put (-145, 42){${\bf T_2}$}
\put (-175, 91){${\bf T_3}$}
\put (-135, 75){${\bf T}$}
\caption{Decomposition of two nested triangles into elementary pieces}\label{notation picture}
\end{figure}

We will now describe a relationship between triple ratios and cross ratios, which allows us to give an new geometric interpretation of the triple ratio. 
To do so, we set up some notation. 

\begin{notation}\label{triple notation}
Let $\big((p_1,l_1),(p_2,l_2),(p_3,l_3)\big)\in\Fmc_3$. For $i,j,k=1,2,3$ so that $i,j,k$ are pairwise distinct, let $q_k:=l_i\cap l_j$ and let $m_k$ be the projective line through $p_i$ and $p_j$. Also, let $w_k$ be the line through $p_k$ and $q_k$, let $t_k:=l_k\cap m_k$, and let $r_k:=w_k\cap m_k$. Finally, let $u_k:=w_i\cap w_j$. (See Figure \ref{notation picture}.) Note that for the rest of this article, arithmetic in the subscripts used in this notation are done modulo $3$. 
\end{notation}

Observe that for all $i=1,2,3$, $p_i$, $u_{i-1}$, $u_{i+1}$, $q_i$ lie on a common projective line.

\begin{prop}\label{cross ratio and triple ratio}
Let $(F_1,F_2,F_3)=\big((p_1,l_1),(p_2,l_2),(p_3,l_3)\big)\in\Fmc_3$. For all $i=1,2,3$, we have
\[C(p_i,u_{i-1},u_{i+1},q_i)=T(F_1,F_3,F_2),\]
where $u_{i-1}$, $u_{i+1}$ and $q_i$ are as defined in the paragraph above.
\end{prop}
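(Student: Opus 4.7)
The plan is to verify the identity by explicit computation in normalized projective coordinates. Both sides of the claimed identity are $\PGL(3,\Rbbb)$-invariant (by Proposition~\ref{triple ratio level set} and the earlier properties of the cross ratio), and the whole configuration $(q_k,w_k,u_k)$ is cyclically symmetric in the indices, so it suffices to prove the identity for $i=1$. Using the $\PGL(3,\Rbbb)$-action, I would place $p_1=[\epsilon_1]$, $p_2=[\epsilon_2]$, $p_3=[\epsilon_3]$ for the standard basis of $\Rbbb^3$. The condition $l_i(p_i)=0$ forces the covectors representing the $l_i$ to have the form
\[\alpha_1 = a\,\epsilon_2^* + b\,\epsilon_3^*, \qquad \alpha_2 = c\,\epsilon_1^* + d\,\epsilon_3^*, \qquad \alpha_3 = e\,\epsilon_1^* + f\,\epsilon_2^*,\]
with the six coefficients $a,\dots,f$ all nonzero because the flags are pairwise transverse, and evaluating the definition of $T$ in these coordinates gives $T(F_1,F_3,F_2) = bcf/(ade)$.

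Solving the appropriate $2\times 2$ linear systems then produces homogeneous coordinates for the auxiliary objects: $q_1 = [-df : ed : cf]$, so that the line $w_1$ through $p_1$ and $q_1$ is represented by the covector $(0,cf,-ed)$, and similarly $w_2 = (ae,0,-bf)$ and $w_3 = (bc,-ad,0)$; intersecting pairs then gives $u_3 = w_1 \cap w_2 = [bcf^2 : ade^2 : acef]$ and $u_2 = w_1 \cap w_3 = [ad^2e : bcde : bc^2f]$. Parametrizing $w_1$ as $(x,\,ed\,t,\,cf\,t)$ and using the affine coordinate $\xi = x/t$ on the chart $t \neq 0$, the four collinear points $p_1,u_3,u_2,q_1$ acquire affine coordinates $\infty,\ bcf^2/(ae),\ ad^2e/(bc),\ -df$. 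With the first entry at infinity the paper's cross-ratio convention collapses to
\[C(p_1,u_3,u_2,q_1) = \frac{\xi(u_3)-\xi(q_1)}{\xi(u_2)-\xi(q_1)} = \frac{bcf^2/(ae) + df}{ad^2e/(bc) + df}.\]
Multiplying numerator and denominator by $ae\cdot bc$ shows that the numerator equals $bcf(bcf+ade)$ and the denominator equals $ade(ade+bcf)$; the common factor $bcf+ade$ cancels, leaving $bcf/(ade) = T(F_1,F_3,F_2)$.

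The only substantive step is this final cancellation of the factor $bcf+ade$; everything else is the bookkeeping of carrying six coefficients through a handful of linear systems. The main obstacle in the write-up is therefore choosing a convenient affine chart on $w_1$ (the one sending $p_1$ to infinity) so that the cancellation becomes transparent, together with care about sign conventions for the cross ratio.
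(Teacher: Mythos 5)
Your computation is correct and follows essentially the same route as the paper's proof: a direct verification in normalized coordinates (you normalize the points $p_i$ to the standard basis vectors, the paper dually normalizes the lines $l_i$ to the coordinate covectors), with the reduction to $i=1$ by cyclic symmetry being legitimate since the configuration $(q_k,w_k,u_k)$ and the quantity $T(F_1,F_3,F_2)$ are both invariant under the cyclic relabeling. The only caveat --- shared with the paper's own argument --- is that the cancellation of the factor $ade+bcf$ silently assumes $T(F_1,F_2,F_3)\neq -1$; when the triple ratio equals $-1$ the points $u_2,u_3,q_1$ coincide and the cross ratio is undefined, a degeneracy that never occurs for the positive triples used later.
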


\begin{proof}
Choose coordinates so that $l_1=[1:0:0]$, $l_2=[0:1:0]$, $l_3=[0:0:1]$, $p_1=[0:b_1,c_1]^T$, $p_2=[a_2:0:c_2]^T$, $p_3=[a_3:b_3:0]^T$. Then $q_1=[1:0:0]^T$, $q_2=[0:1:0]^T$ and $q_3=[0:0:1]^T$. Also, one can compute that $w_1=[0:c_1:-b_1]$, $w_2=[-c_2:0:a_2]$ and $w_3=[b_3:-a_3:0]$, which implies that $u_1=[1:\frac{b_3}{a_3}:\frac{c_2}{a_2}]^T$, $u_2=[\frac{a_3}{b_3}:1:\frac{c_1}{b_1}]^T$ and $u_3=[\frac{a_2}{c_2}:\frac{b_1}{c_1}:1]^T$. 

With this, it is an easy computation to show that for all $i=1,2,3$, 
\[C(p_i,u_{i-1},u_{i+1},q_i)=\frac{b_1c_2a_3}{a_2b_3c_1}.\]
On the other hand, one can also compute that
\[T(F_1,F_3,F_2)=\frac{b_1c_2a_3}{a_2b_3c_1}.\]
\end{proof}

Using the sign of the triple ratio, we can pick out a particular subset of $\Fmc_n$, \cite{FockGoncharov}.

\begin{definition}
A $n$-tuple $(F_1,\dots,F_n)\in\Fmc_n$ is \emph{positive} if $T(F_i,F_j,F_k)>0$ for all triples $F_i<F_j<F_k<F_i$ in the cyclic order on $(F_1,\dots,F_n)$. Denote the set of positive triples in $\Fmc_n$ by $\Fmc_n^+$.
\end{definition}

The positivity of a triple $(F_1,F_2,F_3)=\big((p_1,l_1),(p_2,l_2),(p_3,l_3)\big)\in\Fmc_3$ can be interpreted in the following way. For $i,j,k=1,2,3$ that are pairwise distinct, let $t_k$ and $r_k$ be as defined in Notation \ref{triple notation}. The triple $(F_1,F_2,F_3)$ is positive if and only if $r_k$ and $t_k$ lie in distinct connected components of $\Rbbb\Pbbb^2\setminus(l_i\cup l_j)$. Equivalently, the triple ratio of $(F_1,F_2,F_3)=\big((p_1,l_1),(p_2,l_2),(p_3,l_3)\big)\in\Fmc_3$ is positive if there is a triangle $\Delta\subset\Rbbb\Pbbb^2$ with vertices $p_1, p_2, p_3$ and a triangle $\Delta'\subset\Rbbb\Pbbb^2$ with edges $l_1, l_2, l_3$ so that $\Delta\subset\Delta'$. In Figure \ref{notation picture} we have $\Delta = T \cup T_1\cup T_2 \cup T_3$ and $\Delta' = T \cup Q_1\cup Q_2 \cup Q_3$ (see Section~\ref{polygonsection} for a precise description of the correspondence of $\Fmc_n^+$ and suitably nested polygons). 

With this notation Proposition~\ref{cross ratio and triple ratio} implies the following
\begin{cor}\label{cor:tripleratio}
Let $(F_1,F_2,F_3)=\big((p_1,l_1),(p_2,l_2),(p_3,l_3)\big)\in\Fmc_3$. Let $\Delta'$ be the triangle with edges $l_1, l_2, l_3$ and $T$  as in Figure~\ref{notation picture}. 
Then $\log T(F_1,F_3,F_2)$ is the Hilbert length of the side of the triangle $T$ with respect to the proper convex set $\Delta'$. 
In particular $T$ is an equilateral triangle with respect to this Hilbert metric. 
\end{cor}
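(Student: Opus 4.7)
The plan is to deduce the corollary directly from Proposition~\ref{cross ratio and triple ratio} combined with Definition~\ref{def:Hilbert}. First I would identify the combinatorics of the picture: since the cevians $w_1,w_2,w_3$ meet pairwise at $u_1,u_2,u_3$, the inner triangle $T$ in Figure~\ref{notation picture} has vertices $u_1,u_2,u_3$. Consequently the side of $T$ opposite the vertex $u_i$ is the segment from $u_{i-1}$ to $u_{i+1}$, and it lies on the line $w_i$. Since $w_i$ passes through $p_i\in l_i$ and through the vertex $q_i = l_{i-1}\cap l_{i+1}$ of $\Delta'$, the closed chord $w_i\cap\overline{\Delta'}$ is exactly the segment from $p_i$ to $q_i$; these are the two boundary points that enter the Hilbert-distance formula for points along $w_i\cap\Delta'$.

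Next I would use the positivity hypothesis on $(F_1,F_2,F_3)$ — equivalently, the nesting $\Delta\subset\Delta'$ of the two triangles spelled out immediately before the corollary — to show that $u_{i-1}$ and $u_{i+1}$ both lie strictly inside $\Delta'$ and hence strictly between $p_i$ and $q_i$ on $w_i$, so that the four points $p_i,u_{i-1},u_{i+1},q_i$ are collinear in this order (up to reversal, which is irrelevant by the symmetry $C(a,p,q,b)=C(b,q,p,a)$). A clean way to do this is to reuse the explicit normalization from the proof of Proposition~\ref{cross ratio and triple ratio}: in those coordinates the points $p_i,u_{i-1},u_{i+1},q_i$ are given by manifestly collinear representatives, and the positivity of the triple ratio translates into sign conditions on $a_2,a_3,b_1,b_3,c_1,c_2$ that force the linear order.

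Once the ordering is established, Definition~\ref{def:Hilbert} applied to $\Delta'$ gives
\[ d_{\Delta'}(u_{i-1},u_{i+1}) = \log\bigl|C(p_i,u_{i-1},u_{i+1},q_i)\bigr|, \]
and Proposition~\ref{cross ratio and triple ratio} identifies the cross ratio on the right with $T(F_1,F_3,F_2)$. Since $T(F_1,F_2,F_3)>0$ by positivity, Proposition~\ref{triple ratio level set}(1) makes $T(F_1,F_3,F_2) = 1/T(F_1,F_2,F_3)$ positive as well, so the absolute value may be dropped, and the Hilbert length of the side of $T$ opposite $u_i$ equals $\log T(F_1,F_3,F_2)$. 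Because the right-hand side does not depend on $i\in\{1,2,3\}$, the three sides of $T$ have equal Hilbert length, giving the equilateral claim. The main obstacle is really just the ordering verification in the second paragraph: the rest is a one-line chain of equalities from results already established, but one genuinely needs positivity to ensure that the cross ratio appearing in Proposition~\ref{cross ratio and triple ratio} is the same cross ratio appearing in the Hilbert metric.
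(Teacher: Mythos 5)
Your argument is correct and is exactly the route the paper takes: the corollary is stated as an immediate consequence of Proposition~\ref{cross ratio and triple ratio} together with Definition~\ref{def:Hilbert}, using that $p_i,q_i$ are the boundary points of $\Delta'$ on the line $w_i$ carrying the side $[u_{i-1},u_{i+1}]$ of $T$. Your extra care about the collinear ordering is sensible; the only residual imprecision (which the paper's own statement shares) is that swapping the two middle points inverts the cross ratio, so strictly speaking the Hilbert length is $|\log T(F_1,F_3,F_2)|$, the ordering $p_i,u_{i-1},u_{i+1},q_i$ versus $p_i,u_{i+1},u_{i-1},q_i$ being determined by whether $T(F_1,F_3,F_2)$ is greater or less than $1$.
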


\begin{remark}\label{rem:assembly}
\begin{enumerate}
\item Note that if $(F_1,F_2,F_3) = \big((p_1,l_1),(p_2,l_2),(p_3,l_3)\big)\in\Fmc_3$ is a triple of flags which arises by taking three points $p_1, p_2, p_3$  on the boundary of a quadric, and $l_1, l_2, l_3$ the tangent to the quadric through these points, then $\log T(F_1,F_3,F_2) = 0$. 
\item Corollary~\ref{cor:tripleratio} allows us to interpret the triple ratio as instruction to assemble the configuration $\big((p_1,l_1),(p_2,l_2),(p_3,l_3)\big)$ out of the quadrilateral $Q_1, Q_2, Q_3$. Since $\PGL(3,\Rbbb)$ acts transitively on $\Pmc_4$, each of the quadrilaterals $Q_1, Q_2, Q_3$ in Figure \ref{notation picture} is equivalent up to projective transformation, and the triple ratio can be seen as the gluing parameter. This gives an interpretation similar to the interpretation of the hyperbolic cross ratio function as the gluing parameter for assembling an ideal hyperbolic quadrilateral out of two ideal hyperbolic triangles.  
We will make use of this point of view in Section~\ref{deforming a triple of flags}.
\end{enumerate}
\end{remark}

%%%%%%%%%%%%%%%%%%%%%%%%%%%%%%%%%%%%%%%%%%%%%%%%%%%%%%%%%%%%%%%%%%%%%%%%%%%%%%%%%%%%%%%%%%%%%%%%%%
\section{Flows on the space of nested polygons} 
%%%%%%%%%%%%%%%%%%%%%%%%%%%%%%%%%%%%%%%%%%%%%%%%%%%%%%%%%%%%%%%%%%%%%%%%%%%%%%%%%%%%%%%%%%%%%%%%%%

In this section, we define some flows on $\Fmc_n^+$ which we call the shearing flow, the bulging flow and the eruption flow. 

The eruption flow is naturally defined on $\Fmc_3^+$ and continuously changes the triple ratio. It is based on viewing the triple ratio as gluing parameters to assemble the triple of flags $(F_1, F_2, F_3) $ out of three projective quadilaterals $Q_1, Q_2, Q_3$ as described in Remark~\ref{rem:assembly}. The shearing and bulging flows are naturally defined on $\Fmc_4^+$. They are based on considering the parameters to glue a quadruple $(F_1, F_2, F_3, F_4)$ out of two triples $(F_1, F_2, F_3)$ and $(F_1, F_3, F_4)$. Decomposing an $n$-tuple of flags in $\Fmc_n^+$ into successive triples allows to extend the shearing, bulging and eruption flows to $\Fmc_n^+$. 
In order to describe these flows it is useful to identify points  in $\Fmc_n^+$ with nested polygons. 

%%%%%%%%%%%%%%%%%%%%%%%%%%%%%%%%%%%%%%%%%%%%%%%%%%
\subsection{Suitably nested, labelled polygons}\label{polygonsection}
%%%%%%%%%%%%%%%%%%%%%%%%%%%%%%%%%%%%%%%%%%%%%%%%%%
A \emph{polygon} in $\Rbbb\Pbbb^2$, is a simply connected, properly convex, compact set in $\Rbbb\Pbbb^2$ whose boundary is a union of finitely many projective line segments. These projective line segments are the \emph{edges} of the polygon, and the endpoints of these edges are the \emph{vertices} of the polygon. 

\begin{definition}\
\begin{enumerate}
\item A \emph{labelled polygon} is a polygon equipped with an ordering on its vertices, so that the successor of any vertex $v$ in this ordering is connected to $v$ by an edge. For any labelled polygon $N$, let $p_1(N),\dots,p_n(N)$ denote the vertices of $N$, enumerated in according to the order on the vertices. Also, let $e_1(N),\dots,e_n(N)$ denote the edges of $N$, enumerated so that the endpoints of $e_i(N)$ are $p_i(N)$ and $p_{i+1}(N)$ for all $i=1,\dots,n$. (Here, $p_{n+1}(N):=p_1(N)$.)
\item A pair $(N,N')$ of labelled $n$-gons are \emph{suitably nested} if $N\subset N'$, and $p_i(N)$ lies in the interior of $e_i(N')$ for all $i=1,\dots,n$.
\end{enumerate}
\end{definition}

The next proposition is well-known, and relates $n$-tuples of positive flags in $\Rbbb\Pbbb^2$ to suitably nested, labelled $n$-gons. (See Figure \ref{polygonflags}.)

\begin{prop}\cite[Theorem~2.2]{FockGoncharov_convex}\label{polygon}
Let $\big((p_1,l_1),\dots,(p_n,l_n)\big)=F\in\Fmc_n$. $F\in\Fmc_n^+$ if and only if there is a (necessarily unique) pair of suitably nested, labelled $n$-gons $(N,N')$ so that $p_i(N)=p_i$ and $e_i(N')\subset l_i$.
\end{prop}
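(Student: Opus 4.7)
The plan is to prove the statement by induction on $n\geq 3$. For the base case $n=3$, the equivalence is essentially already recorded in the paragraph preceding Corollary \ref{cor:tripleratio}: positivity of the single triple ratio $T(F_1,F_2,F_3)$ is equivalent to the existence of a triangle $\Delta$ with vertices $p_1,p_2,p_3$ contained in a triangle $\Delta'$ with edges on $l_1,l_2,l_3$, and the labelling of edges and vertices of the two resulting $3$-gons is forced by the cyclic order of the flags. Uniqueness in this case is immediate.

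For the inductive step $n-1\to n$ in the forward direction, I would fix $F=(F_1,\dots,F_n)\in\Fmc_n^+$ and apply the inductive hypothesis to the sub-tuple $F'=(F_1,\dots,F_{n-1})$, which remains positive since positivity is hereditary under passage to sub-tuples with the induced cyclic order. This yields a unique suitably nested pair $(M,M')$ of labelled $(n-1)$-gons. I would then work in an affine chart in which $M'$ is a bounded convex polygon, define $N$ to be the convex hull of $p_1,\dots,p_n$, and define $N'$ to be the intersection of the $n$ closed half-planes bounded by $l_1,\dots,l_n$ lying on the side that contains $N$. The verifications needed are: that $N'$ is a genuine convex $n$-gon whose edges lie on $l_1,\dots,l_n$ in the prescribed cyclic order; that $p_n$ lies in the interior of the edge on $l_n$ and that $p_{n-1}$ and $p_1$ remain in the interiors of the edges on $l_{n-1}$ and $l_1$ after truncation by $l_n$; and that $N\subseteq N'$. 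The first two reduce to applying the $n=3$ case to the triple $(F_{n-1},F_n,F_1)$, while the containment $N\subseteq N'$ reduces to checking that $p_n$ lies on the correct side of every $l_i$ with $i\neq n$ and that each previous $p_j$ lies on the correct side of $l_n$; each such statement follows from the $n=3$ case applied to a triple of flags containing $F_n$. Uniqueness of $(N,N')$ is immediate since the vertices of $N$ are forced to be the points $p_i$ and the lines supporting the edges of $N'$ are forced to be the $l_i$, in the prescribed cyclic order.

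For the reverse direction, suppose $(N,N')$ is suitably nested for $F$, and let $1\leq i<j<k\leq n$ be any triple of indices. The triangle with vertices $p_i,p_j,p_k$ is contained in $N$ by convexity of $N$, while the three lines $l_i,l_j,l_k$ cut out (on the side determined by $N'$) a triangular region containing $N'$, and hence containing $N$. Since $p_m\in l_m\cap\partial N'$ for $m\in\{i,j,k\}$, each $p_m$ must lie on the edge of this outer triangle contained in $l_m$. This produces a suitably nested triangle pair for the sub-triple $(F_i,F_j,F_k)$, so the $n=3$ case gives $T(F_i,F_j,F_k)>0$; hence $F\in\Fmc_n^+$.

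The main obstacle is the inductive step in the forward direction: one must choose the affine chart carefully so that all of $p_1,\dots,p_n$ together with the relevant intersection points $l_i\cap l_{i+1}$ are simultaneously visible, and then perform the convex-geometric bookkeeping to confirm that $N'$ is a genuine $n$-gon, that the edges are indexed correctly, and that every nesting condition survives. The remaining pieces — heredity of positivity, uniqueness, and the reverse direction — are essentially formal once the $n=3$ case is in hand.
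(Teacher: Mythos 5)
Your base case and your reverse direction are sound (note that the paper itself offers no proof to compare against --- it cites Fock--Goncharov for this statement). The forward inductive step, however, has a gap that cannot be closed from the stated hypotheses. By Proposition \ref{triple ratio level set}(1) and the visible cyclic symmetry of the defining formula, the sign of $T(F_i,F_j,F_k)$ is unchanged under every permutation of the three flags; hence the condition ``all triple ratios positive'' defining $\Fmc_n^+$ is invariant under arbitrary permutations of $(F_1,\dots,F_n)$, while suitable nesting forces $p_1,\dots,p_n$ to be in convex position in exactly the prescribed cyclic order. Concretely: take $q_1,q_2,q_3,q_4$ in order on a conic with their tangent lines, so that every sub-triple has triple ratio $1$ (cf.\ Remark \ref{rem:assembly}); the reordered tuple $(G_1,G_3,G_2,G_4)$ then satisfies every hypothesis you ever invoke, yet no convex quadrilateral has $q_1,q_3,q_2,q_4$ as vertices in that cyclic order. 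In your induction this surfaces at the claim that the cyclic order of the edges of $N'$ ``reduces to applying the $n=3$ case to $(F_{n-1},F_n,F_1)$'': that triple does not involve $l_j$ for $j\neq 1,n-1,n$, so it cannot tell you which vertex of $M'$ the line $l_n$ cuts off; in the example above $l_n$ cuts off the wrong vertex, every interiority check you list still passes, and the resulting quadrilateral has its edges in the cyclic order $l_1,l_2,l_4,l_3$.

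What is missing is the positivity of the edge invariants: Fock--Goncharov's notion of a positive tuple (the one the citation actually refers to, and the one the paper tacitly uses when it asserts $C<0$ so that $\sigma_{i,j}=\log(-C(\cdots))$ is defined in Proposition \ref{finite parameterization}) requires the cross ratios along the edges of a triangulation to be negative in addition to the triple ratios being positive. Those cross ratios are exactly what record on which side of the diagonal $\overline{p_1p_{n-1}}$, and in which angular sector at $p_1$ and $p_{n-1}$, the new flag $F_n$ sits; with them your induction scheme is the standard one and does go through (the affine-chart bookkeeping you flag is genuine but routine, since a posteriori everything lies in a chart containing $\overline{M'}$). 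As written, though, the forward implication cannot be derived from positivity of triple ratios alone, so the inductive step needs these additional invariants as input.
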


\begin{figure}[ht]
\centering
\includegraphics[scale=0.6]{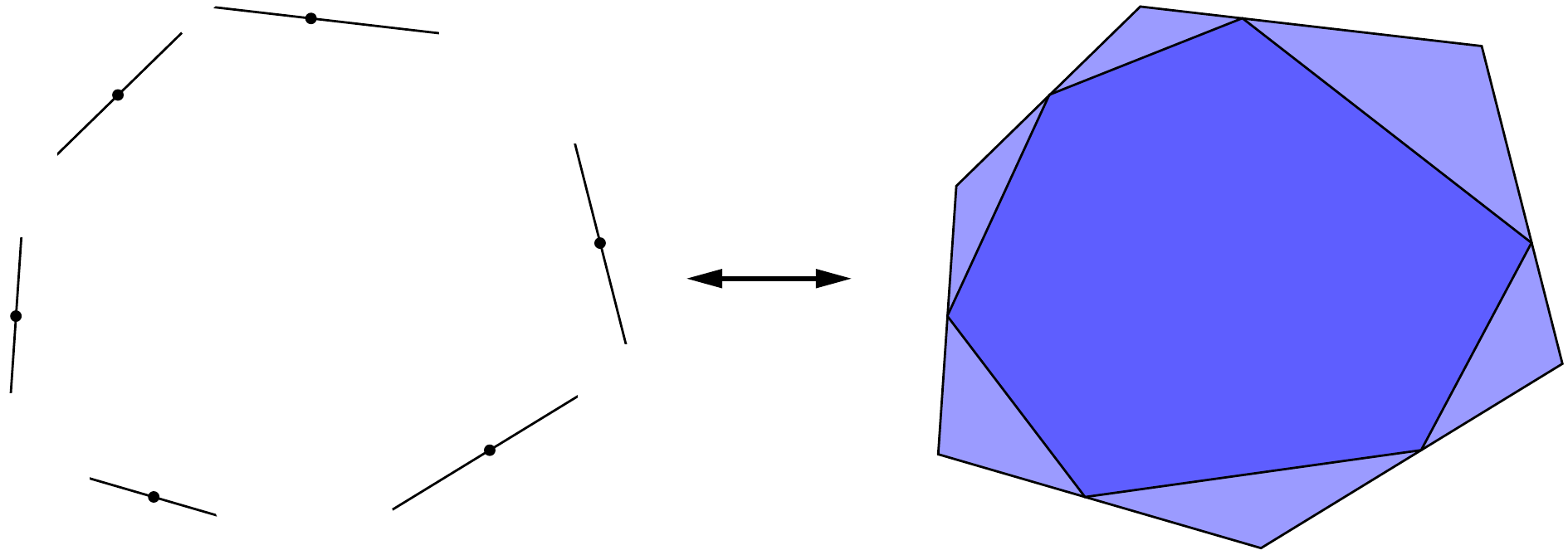}
\small
\put (-200, 65){$(p_1,l_1)$}
\put (-225, 16){$(p_2,l_2)$}
\put (-265, 116){$(p_n,l_n)$}
\put (-5, 65){$p_1(N)$}
\put (-30, 16){$p_2(N)$}
\put (-70, 116){$p_n(N)$}
\put (1, 38){$p_1(N')$}
\put (-65, -5){$p_2(N')$}
\put (-15, 105){$p_n(N')$}
\caption{$n$-tuples of flags in $\Fmc_n$ and suitably nested pairs of $n$-gons}\label{polygonflags}
\end{figure}

Proposition \ref{polygon} gives a canonical homeomorphism between $\Fmc_n^+$ and the space of suitably nested, labelled $n$-gons. As a consequence, we will henceforth blur the distinction between $n$-tuples of positive flags in $\Rbbb\Pbbb^2$ and pairs of suitably nested, labelled $ n$-gons.

We describe an explicit parametrization of $\Fmc_n^+$ in terms of the projective invariants we introduced, cross ratios and triple ratios. For this consider a pair of suitably nested labelled polygons $(N,N')=\big((\overline{p}_1,\overline{l}_1),\dots,(\overline{p}_n,\overline{l}_n)\big)\in\Fmc_n^+$ and choose a triangulation $\Tmc$ of $N$, so that the set of vertices of the triangulation is $\{\overline{p}_1,\dots,\overline{p}_n\}$, i.e. the set of vertices of $N$. Note that this induces a triangulation of the labelled $n$-gon $M$ for every $(M,M')\in\Fmc_n^+$. Let $I_\Tmc$ denote the set of internal edges of $\Tmc$ and $\Theta_\Tmc$ denote the set of triangles of $\Tmc$.

Now, let $i,j\in\{1,\dots,n\}$ so that $\overline{p}_i,\overline{p}_j$ are endpoints of some internal edge $a_{i,j}\in I_\Tmc$. Then let $k,k',\in\{1,\dots,n\}$ so that $i<k<j<k'<i$ in the obvious cyclic ordering on $\{1,\dots,n\}$, and $\overline{p}_i,\overline{p}_j,\overline{p}_k$ and $\overline{p}_i,\overline{p}_j,\overline{p}_{k'}$ are vertices of the two triangles in $\Theta_\Tmc$ that have $e_{i,j}$ as a common edge. For any $F=\big((p_1,l_1),\dots,(p_n,l_n)\big)\in\Fmc_n^+$, define 
\[\sigma_{i,j}(F):=\log\big(-C(l_i,p_k,p_k',\overline{p_ip_j})\big),\]
where $\overline{p_ip_j}$ denotes the projective line through $p_i$ and $p_j$ and $C$ is the cross ratio introduced in Section~\ref{sec:crossratio} . Given Proposition \ref{polygon}, it is easy to check that $\sigma_{i,j}(F)$ is well-defined. This associates to every internal edge $e_{i,j}\in I_\Tmc$ two functions $\sigma_{i,j},\sigma_{j,i}:\Fmc_n^+\to\Rbbb$. Since these functions are projective invariants, they descend to functions $\sigma_{i,j},\sigma_{j,i,}:\PGL(3,\Rbbb)\backslash\Fmc_n^+\to\Rbbb$.

Similarly, let $i,j,k\in\{1,\dots,n\}$ so that $i<j<k<i$, and there is a triangle $T_{i,j,k}\in\Theta_\Tmc$ with vertices $\overline{p}_i$, $\overline{p}_j$ and $\overline{p}_k$. Then define
\[\tau_{i,j,k}(F):=\log T\big((p_i,l_i),(p_j,l_j),(p_k,l_k)\big).\]
This associates to every triangle in $T_{i,j,k}\in\Theta_\Tmc$ the function $\tau_{i,j,k}:\Fmc_n^+\to\Rbbb$, which descends to a function $\tau_{i,j,k}:\PGL(3,\Rbbb)\backslash\Fmc_n^+\to\Rbbb$. The next proposition tells us that the projective invariants $\sigma_{i,j}$ and $\tau_{i,j,k}$ can be used to completely understand $\Fmc_n^+$. 
\begin{prop}\cite[Theorem 2.2]{FockGoncharov_convex}\label{finite parameterization}
The map 
\[\big((\sigma_{i,j},\sigma_{j,i})_{e_{i,j}\in I_\Tmc},(\tau_{i,j,k})_{T_{i,j,k}\in\Theta_\Tmc}\big):\PGL(3,\Rbbb)\backslash\Fmc_n^+\to\Rbbb^{2I_\Tmc+\Theta_\Tmc}\] 
is a homeomorphism.
\end{prop}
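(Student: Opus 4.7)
The plan is to build an explicit continuous inverse by reconstructing the configuration of flags triangle by triangle from the given parameters. A quick dimension check supports this: the flag variety in $\Rbbb\Pbbb^2$ is $3$-dimensional, so $\Fmc_n$ has dimension $3n$; $\PGL(3,\Rbbb)$ has dimension $8$; and any triangulation of an $n$-gon satisfies $|I_\Tmc| = n-3$, $|\Theta_\Tmc| = n-2$, so both sides of the map have dimension $3n-8$. The coordinate map itself is manifestly continuous, since the cross ratios and triple ratios are rational in the flag data with non-vanishing denominators on $\Fmc_n^+$, and the signs are arranged so that the logarithms in the definitions of $\sigma_{i,j}$ and $\tau_{i,j,k}$ make sense.

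To construct the inverse, I would fix a base triangle $T_0 \in \Theta_\Tmc$ with vertices indexed by some $i_0 < j_0 < k_0$. By Proposition~\ref{triple ratio level set} the value of $\tau_{i_0,j_0,k_0}$ determines the triple $(F_{i_0}, F_{j_0}, F_{k_0})$ uniquely up to the $\PGL(3,\Rbbb)$-action, so one picks a continuous section of the orbit map to normalize this triple in a canonical form depending continuously on the parameter (for instance using the explicit coordinates from the proof of Proposition~\ref{cross ratio and triple ratio}). Since $\Tmc$ triangulates a disk, its dual graph is a tree, so the remaining triangles can be enumerated as $T_2, \dots, T_{n-2}$ in such an order that each $T_m$ meets $T_0 \cup T_2 \cup \dots \cup T_{m-1}$ along exactly one edge $e_m$. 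Under this enumeration $T_m$ contributes exactly one new flag $F_{v_m}$ (the vertex opposite $e_m$) and exactly one new internal edge (namely $e_m$ itself), so the newly available parameters are $\sigma_{a,b}$, $\sigma_{b,a}$ with $e_m = e_{a,b}$, together with $\tau_{a,b,v_m}$ — three real numbers matching the three degrees of freedom in $F_{v_m}$.

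The heart of the induction is to show these three parameters uniquely determine $F_{v_m} = (p_{v_m}, l_{v_m})$, given the already-constructed flags $F_a$, $F_b$, and the third vertex $F_c$ of the triangle on the other side of $e_m$. Indeed, $\sigma_{a,b}$ prescribes the cross ratio of a pencil of four lines through $p_a$, three of which ($l_a$, $\overline{p_ap_b}$, $\overline{p_ap_c}$) are already known, and this uniquely determines the fourth line $\overline{p_ap_{v_m}}$. Symmetrically $\sigma_{b,a}$ determines $\overline{p_bp_{v_m}}$, and $p_{v_m}$ emerges as the intersection of these two lines; then $\tau_{a,b,v_m}$ pins down the unique line $l_{v_m}$ through $p_{v_m}$ realizing that triple ratio with $F_a$ and $F_b$, again by Proposition~\ref{triple ratio level set}. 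Every operation here is rational and continuous, so the assembled map $\Rbbb^{3n-8} \to \PGL(3,\Rbbb)\backslash\Fmc_n^+$ is continuous and, by construction, a two-sided inverse to the coordinate map.

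The step I expect to be most subtle is verifying that the construction actually lands in $\Fmc_n^+$ and yields a suitably nested pair of polygons as in Proposition~\ref{polygon}, for every real choice of parameters. Positivity is a global condition on \emph{all} ordered triples of flags — not merely the triangulation triangles — so it does not follow automatically from positivity of the individual $e^{\tau_{i,j,k}}$. The hard part is to handle this by a parallel induction on $m$: after each step one must verify that the updated polygon pair remains suitably nested (using that the finite shearing parameters keep $p_{v_m}$ in the interior of the appropriate outer edge), and then deduce positivity of all non-triangulation triples by repeatedly applying a ``flip'' identity across internal edges to express any such triple ratio as a subtraction-free rational function of the $\sigma$'s and $\tau$'s. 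This positivity analysis is really the essential Fock--Goncharov input.
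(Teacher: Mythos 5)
The paper offers no proof of this proposition; it is quoted verbatim from Fock--Goncharov (\cite[Theorem~2.2]{FockGoncharov_convex}), so there is nothing in the text to compare your argument against. Your reconstruction is the standard proof of this result and is correct in structure: the dimension count, the tree-order enumeration of triangles, the observation that each new triangle makes available exactly the three parameters $(\sigma_{a,b},\sigma_{b,a},\tau_{a,b,v_m})$ needed for the three degrees of freedom of the new flag, and the fact that a cross ratio with three lines of the pencil fixed (resp.\ a triple ratio with two flags and the third point fixed) is a projective coordinate on the remaining pencil, so each parameter determines the missing line uniquely --- all of this is right, and gives injectivity together with a continuous candidate inverse. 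You have also correctly located the only genuinely delicate point: that the configuration so assembled is pairwise transverse, suitably nested, and positive for \emph{every} choice of parameters in $\Rbbb^{3n-8}$, which requires an inductive convexity argument for the nested polygons plus the subtraction-free flip identities for non-triangulation triples. That step is only sketched in your write-up, but it is precisely the content of the cited Fock--Goncharov theorem, so treating it as the imported input is consistent with how the paper itself uses the statement.
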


\begin{figure}[ht]
\centering
\includegraphics[scale=0.6]{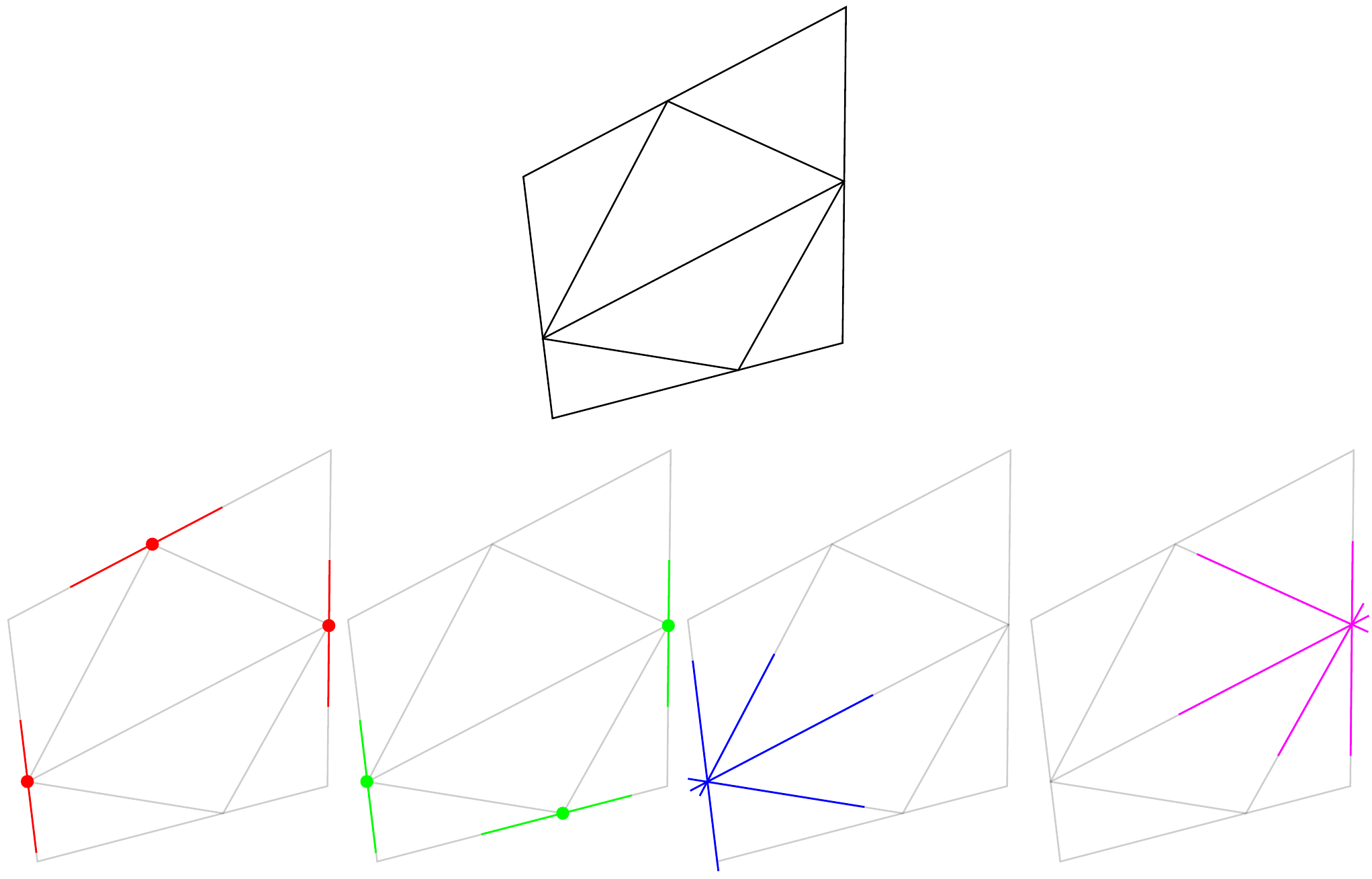}
\small
\put (-243, 137){$(p_1,l_1)$}
\put (-206, 204){$(p_2,l_2)$}
\put (-135, 177){$(p_3,l_3)$}
\put (-165, 123){$(p_4,l_4)$}
\put (-320, -5){$\tau_{1,2,3}$}
\put (-230, -5){$\tau_{3,4,1}$}
\put (-140, -5){$\sigma_{1,3}$}
\put (-50, -5){$\sigma_{3,1}$}
\caption{The parameterization of $\Fmc_4^+$ given by Proposition \ref{finite parameterization}}\label{finiteparameterization}
\end{figure}

We will make use of the following proposition, whose proof we leave to the reader. 
\begin{prop}\label{finite convergence}
Let $\left\{F^{(s)}=\big((p^{(s)}_1,l^{(s)}_1),\dots,(p^{(s)}_n,l^{(s)}_n)\big)\right\}_{s=1}^\infty$ be a sequence in $\Fmc_n^+$ with the following properties:
\begin{itemize}
\item For all $e_{i,j}\in I_\Tmc$, $\sigma_{i,j}(F^{(s)})$ and $\sigma_{j,i}(F^{(s)})$ both converge to positive real numbers as $s\to\infty$,
\item For all $T_{i,j,k}\in\Theta_\Tmc$, $\tau_{i,j,k}(F^{(s)})$ converges to a positive real number as $s\to\infty$,
\item For $i=1,2,3$, the sequences $\{p_i^{(s)}\}_{s=1}^\infty$ converge in $\Rbbb\Pbbb^2$,
\item For $i=1,2$, the sequences $\{l_i^{(s)}\}_{s=1}^\infty$ converge $(\Rbbb\Pbbb^2)^*$,
\item For $i=1,2$, $j=1,2,3$, $i\neq j$, $\lim_{s\to\infty}p_i^{(s)}$ does not lie in $\lim_{s\to\infty}l_j^{(s)}$.
\end{itemize}
Then the sequence $\{F^{(s)}\}_{s=1}^\infty$ converges in $\Fmc_n^+$.
\end{prop}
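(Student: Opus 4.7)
The plan is to combine the parameter convergence from Proposition~\ref{finite parameterization} with the convergence of the concrete flag data $\{p_i^{(s)}\}_{i=1,2,3}$ and $\{l_j^{(s)}\}_{j=1,2}$ to pin down a unique limit in $\Fmc_n^+$. The convergence of $\sigma$ and $\tau$ values gives convergence in the quotient $\PGL(3,\Rbbb)\backslash\Fmc_n^+$, and the extra data fixes the $\PGL(3,\Rbbb)$-orbit representative.

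First, I would apply Proposition~\ref{finite parameterization}: by the first two bulleted hypotheses, the image of $F^{(s)}$ in $\PGL(3,\Rbbb)\backslash\Fmc_n^+$ converges to $[F^*]$ for some $F^*=\big((p_1^*,l_1^*),\dots,(p_n^*,l_n^*)\big)\in\Fmc_n^+$ whose $\sigma$- and $\tau$-coordinates are the limiting values. Since the $\PGL(3,\Rbbb)$-action on $\Fmc_n^+$ is free and proper---any three pairwise transverse flags have trivial common stabilizer in $\PGL(3,\Rbbb)$, and properness is standard---the quotient map is a principal $\PGL(3,\Rbbb)$-bundle which admits continuous local sections. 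A section defined near $[F^*]$ produces elements $g_s\in\PGL(3,\Rbbb)$ for large $s$ with $\widetilde{F}^{(s)}:=g_sF^{(s)}\to F^*$ in $\Fmc_n^+$.

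Next, I would argue that $g_s\to g^*$ in $\PGL(3,\Rbbb)$. Reading off the first three points and first two lines from $\widetilde{F}^{(s)}\to F^*$ yields $g_sp_i^{(s)}\to p_i^*$ for $i=1,2,3$ and $g_sl_j^{(s)}\to l_j^*$ for $j=1,2$. The last three bulleted hypotheses ensure $p_i^{(s)}\to p_i$, $l_j^{(s)}\to l_j$, and that the limit configuration $(p_1,l_1,p_2,l_2,p_3)$ is in general position (two transverse flags together with a point off both lines and not collinear with $p_1,p_2$). The key input here is that $\PGL(3,\Rbbb)$ acts freely on such generic $5$-tuples with open orbits; this follows from a dimension count ($\dim\PGL(3,\Rbbb)=8$ matches the dimension of the configuration space) together with a direct verification in normalized coordinates $p_1=[1{:}0{:}0]$, $l_1=\{y=0\}$, $p_2=[0{:}1{:}0]$, $l_2=\{x=0\}$, and $p_3$ generic, where the candidate $g$ is forced to be diagonal with all entries equal. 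Since the orbit map is then a local homeomorphism at the identity, the simultaneous convergence of $x^{(s)}:=(p_1^{(s)},l_1^{(s)},p_2^{(s)},l_2^{(s)},p_3^{(s)})$ and $g_s x^{(s)}$ forces $g_s\to g^*$, the unique element sending $(p_i,l_j)$ to $(p_i^*,l_j^*)$.

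Finally, $F^{(s)}=g_s^{-1}\widetilde{F}^{(s)}\to (g^*)^{-1}F^*$ in $\Fmc_n^+$, giving the desired convergence. The main obstacle is the convergence of $g_s$ itself: every other step is a formal consequence of Proposition~\ref{finite parameterization} and standard facts about free, proper actions, but turning the quotient-level convergence into ambient convergence requires the general-position hypothesis on the limit $5$-tuple. That hypothesis is exactly what is needed to make the orbit map a local homeomorphism at the limit configuration, and therefore to force $g_s$ to converge in $\PGL(3,\Rbbb)$ rather than merely admit convergent subsequences with possibly different limits.
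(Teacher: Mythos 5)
The paper leaves the proof of Proposition~\ref{finite convergence} to the reader, so there is no official argument to compare against; I am assessing your proposal on its own terms. Your overall strategy is surely the intended one: Proposition~\ref{finite parameterization} converts the first two bullets into convergence of $[F^{(s)}]$ in $\PGL(3,\Rbbb)\backslash\Fmc_n^+$, a local section of the (free, proper) action writes $F^{(s)}=g_s^{-1}\widetilde F^{(s)}$ with $\widetilde F^{(s)}\to F^*$, and the remaining bullets should pin down $g_s$. The first and last steps are fine.

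The gap is in the middle step, exactly at the parenthetical ``not collinear with $p_1,p_2$''. Non-collinearity of $\lim p_1^{(s)},\lim p_2^{(s)},\lim p_3^{(s)}$ is \emph{not} implied by the bullets as you read them (they only forbid incidences $\lim p_i^{(s)}\in\lim l_j^{(s)}$), and it is precisely what your normal-form computation uses: if $p_3\in\overline{p_1p_2}$, then after normalizing $p_1=[1{:}0{:}0]$, $p_2=[0{:}1{:}0]$, $l_1\cap l_2=[0{:}0{:}1]$, the stabilizer of the $5$-tuple is the one-parameter group $\mathrm{diag}(1,1,c)$, the orbit is not open, and the ``local homeomorphism'' conclusion fails. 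This is not a repairable technicality within your reading of the hypotheses, because the statement then fails outright. Take $n=3$ and
\[F^{(s)}=\Big(\big([1{:}0{:}0],[0{:}1{:}0]\big),\big([0{:}1{:}0],[1{:}0{:}0]\big),\big([1{:}1{:}\epsilon_s],[2{:}1{:}-3/\epsilon_s]\big)\Big),\qquad \epsilon_s\to0^+.\]
Each $F^{(s)}$ is a pairwise transverse triple with constant triple ratio $2$, hence lies in $\Fmc_3^+$ with $\tau_{1,2,3}\equiv\log 2$; the points and the first two lines converge, and the limits satisfy $p_1\notin l_2$, $p_2\notin l_1$, $p_3=[1{:}1{:}0]\notin l_1\cup l_2$. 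Yet $l_3^{(s)}=[2\epsilon_s{:}\epsilon_s{:}-3]\to\{z=0\}=\overline{p_1p_2}$, so the limiting flags are not pairwise transverse and $F^{(s)}$ does not converge in $\Fmc_3^+$. (Indeed $F^{(s)}=\mathrm{diag}(1,1,\epsilon_s/\epsilon_1)\cdot F^{(1)}$ is constant in the quotient, and the degenerating gauge is exactly the extra stabilizer above.) So either the proposition carries the additional hypothesis that the three limit points are non-collinear --- which does hold in its one application, Lemma~\ref{convergence of vertices}, where $p_1,p_2,p_3$ are the fixed vertices of an ideal triangle of a strictly convex domain --- or your proof must obtain non-collinearity from somewhere; it cannot be read off the stated bullets. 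A secondary point: even granting genericity of the limit $5$-tuple, the claim that simultaneous convergence of $x^{(s)}$ and $g_sx^{(s)}$ forces $g_s$ to converge is exactly properness of the action on that open stratum, and is where the transversality hypotheses actually do work; it deserves the short compactness argument (normalize matrix representatives $M_s$, pass to a limit $M_\infty$, and show $\det M_\infty=0$ would force one of the excluded incidences, such as the collinearity exploited above).
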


%%%%%%%%%%%%%%%%%%%%%%%%%%%%%%%%%%%%%%%%%%%%%%%%%%
\subsection{Deforming a triple of flags}\label{deforming a triple of flags}
%%%%%%%%%%%%%%%%%%%%%%%%%%%%%%%%%%%%%%%%%%%%%%%%%%
Let $(F_1, F_2, F_3) = \big((p_1,l_1),(p_2,l_2),(p_3,l_3)\big) \in \Fmc^+_3$ and $(\Delta, \Delta')$ the corresponding pair of suitably nested, labelled triangles. 
We will now specify a systematic way to obtain three labelled quadrilaterals from $(\Delta,\Delta')\in\Fmc_3^+$. For $i=1,2,3$, let $q_i$, $r_i$, $u_i$, and $m_i$ be as defined in Notation \ref{triple notation}. For $i=1,2,3$, let $Q_i$ be the labelled quadrilateral in $\Delta'$ with vertices $q_i,p_{i-1},u_i,p_{i+1}$. Note that $\Delta'=Q_1\cup Q_2\cup Q_3\cup T$, where $T$ is the labelled triangle $T$ whose vertices are $u_1$, $u_2$, $u_3$. In each $Q_i$, let $T_i\subset Q_i$ be the labelled triangle with vertices $u_i$, $p_{i+1}$, $p_{i-1}$. Note then that $\Delta=T_1\cup T_2\cup T_3\cup T$. (See Figure \ref{notation picture}.)

Conversely, given three labelled quadrilaterals in $\Pmc_4$, we can use Proposition~\ref{cross ratio and triple ratio} to assemble them together to obtain any pair of suitably nested, labelled triangles $(\Delta,\Delta')\in\Fmc_3^+$. With this we interpret the triple ratio of $(F_1,F_2,F_3)\in\Fmc_3^+$ as instruction to assemble $(\Delta,\Delta')$ from a triple of labelled quadrilaterals in $\Pmc_4$. Since there is a unique labelled quadrilateral in $\Rbbb\Pbbb^2$ up to projective transformations, by deforming the corresponding ``assembly instructions", we obtain a path of deformations of triples of flags that are projectively non-equivalent.

More explicitly, let $v_1$, $v_2$, $v_3$ be non-zero vectors that span $q_1$, $q_2$, $q_3$ respectively. Then let $g_1(t),g_2(t),g_3(t)\in\PGL(3,\Rbbb)$ that have the following matrix representations when written in the basis $\{v_1,v_2,v_3\}$:
\[
g_1(t):=\left[\begin{array}{ccc}
1&0&0\\
0&e^\frac{t}{3}&0\\
0&0&e^{-\frac{t}{3}}
\end{array}\right],\,\,
g_2(t):=\left[\begin{array}{ccc}
e^{-\frac{t}{3}}&0&0\\
0&1&0\\
0&0&e^\frac{t}{3}
\end{array}\right],\,\,
g_3(t):=\left[\begin{array}{ccc}
e^\frac{t}{3}&0&0\\
0&e^{-\frac{t}{3}}&0\\
0&0&1
\end{array}\right].
\]
It is an easy computation to check that for all $i=1,2,3$, $g_{i-1}(t)\cdot p_i=g_{i+1}(t)\cdot p_i$ lies on $l_i$. Furthermore,  $u_i(t) := g_i(t)\cdot u_i$ lies on the line through $q_{i-1}$ and $g_i(t)\cdot p_{i-1}$. These together imply that there is a unique labelled triangle $T(t)$ (with vertices $u_1(t), u_2(t), u_3(t)$) so that $\big(\Delta(t),\Delta'(t)\big)\in\Fmc_3^+$, where
\[\Delta'(t):=\big(g_1(t)\cdot Q_1\big)\cup \big(g_2(t)\cdot Q_2\big)\cup \big(g_3(t)\cdot Q_3\big)\cup T(t)\]
and 
\[\Delta(t):=\big(g_1(t)\cdot T_1\big)\cup \big(g_2(t)\cdot T_2\big)\cup \big(g_3(t)\cdot T_3\big)\cup T(t).\]

\begin{definition}
The \emph{eruption flow on $\Fmc_3^+$} is the flow $\epsilon_t:\Fmc_3^+\to\Fmc_3^+$ defined by $\epsilon_t:(\Delta,\Delta')\mapsto\big(\Delta(t),\Delta'(t)\big)$.
\end{definition}

Let $\big(F_1(t), F_2(t), F_3(t)\big)\in\Fmc_3^+$ be the triple of flags corresponding to $\big(\Delta(t),\Delta'(t)\big)$, then $T\big(F_1(t), F_2(t), F_3(t)\big)=e^t\cdot T(F_1,F_2,F_3)$. In particular, the eruption flow on $\Fmc_3^+$ preserves the foliation of $\Fmc_3^+$ by its $\PGL(3,\Rbbb)$-orbits. This implies that the eruption flow descends to a smooth flow on the $1$-dimensional manifold $\Fmc_3^+/\PGL(3,\Rbbb)$.

\begin{remark}
The name eruption flow arise from imagining the triangle $\Delta$ as a volcano, with $T$ being the opening of the volcano. Thus applying the eruption flow with $t>0$ let's this volcano erupt more. 
\end{remark}

%%%%%%%%%%%%%%%%%%%%%%%%%%%%%%%%%%%%%%%%%%%%%%%%%%
\subsection{Deforming a quadruple of flags} 
%%%%%%%%%%%%%%%%%%%%%%%%%%%%%%%%%%%%%%%%%%%%%%%%%%
Given a quadruple $\big(F_1, F_2, F_3, F_4\big) \in \Fmc_4^+$, let $(N,N')$ be the associated pair of nested quadrilaterals. We obtain two triples $(F_1, F_3, F_2), (F_1, F_3, F_4) \in \Fmc_3^+$ and a decomposition of $N$ into two triangles $N_R, N_L$, which lie to the right, respectively left of the diagonal $a_{1,3}$ in $N$ with backward endpoint $p_1$ and forward endpoint $p_3$. Note that the diagonal $a_{1,3}$ decomposes $N'$ into two quadrilaterals. The shear and bulge flows associated to this decomposition of $N$ into the two triangles are defined as follows. 

Let $v_1, v_3, v_{1,3}\in\Rbbb^3$ be non-zero vectors that span $p_1$, $p_3$, and the intersection point $l_1\cap l_3$ respectively. Define $s(t),b(t)\in\PGL(3,\Rbbb)$ to be the projective transformations that are represented by the matrices

\[
s(t):=\left[\begin{array}{ccc}
e^\frac{t}{2}&0&0\\
0&1&0\\
0&0&e^{-\frac{t}{2}}
\end{array}\right],\,\,
b(t):=\left[\begin{array}{ccc}
e^{-\frac{t}{6}}&0&0\\
0&e^\frac{t}{3}&0\\
0&0&e^{-\frac{t}{6}}
\end{array}\right]\]
in the basis $\{v_1,v_{1,3},v_3\}$.

Let 
\begin{eqnarray*}
N(t):=\big(s(t)\cdot N_L\big)\cup\big(s(-t)\cdot N_R\big),\\
N'(t):=\big(s(t)\cdot N_L'\big)\cup\big(s(-t)\cdot N_R'\big),\\
M(t):=\big(b(t)\cdot N_L\big)\cup\big(b(-t)\cdot N_R\big),\\
M'(t):=\big(b(t)\cdot N_L'\big)\cup\big(b(-t)\cdot N_R'\big).
\end{eqnarray*}
Observe that $s(t)$ and $b(t)$ both fix $p_1$, $p_3$ and stabilize $a_{1,3}$, $l_1$, $l_3$. This implies that the pairs $\big(N(t),N'(t)\big)$ and $\big(M(t),M'(t)\big)$ are suitably nested, labelled quadrilaterals.

\begin{definition}\
\begin{enumerate}
\item The \emph{shearing flow on $\Fmc_4^+$ associated to $a_{1,3}$} is the flow $(\psi)_t:\Fmc_4^+\to\Fmc_4^+$ defined by $(\psi)_t:(N,N')\mapsto\big(N(t),N'(t)\big)$.
\item The \emph{bulging flow on $\Fmc_4^+$ associated to $a_{1,3}$} is the flow $(\beta)_t:\Fmc_4^+\to\Fmc_4^+$ defined by $(\beta)_t:(N,N')\mapsto\big(M(t),M'(t)\big)$.
\end{enumerate}
\end{definition}

If $\big(F_1(t),F_2(t),F_3(t),F_4(t)\big)\in\Fmc_4^+$ is the quadruple of flags corresponding to $\big(N(t),N'(t)\big)$, then one can compute that 
\[C\big(l_1(t),p_2(t),p_4(t),\overline{p_1(t)p_3(t)}\big)=e^{-t}\cdot C\big(l_1,p_2,p_4,\overline{p_1p_3}\big),\]
\[C\big(l_3(t),p_4(t),p_2(t),\overline{p_1(t)p_3(t)}\big)=e^{-t}\cdot C\big(l_3,p_4,p_2,\overline{p_1p_3}\big).\] 
A similar computation also proves that if $\big(F_1(t),F_2(t),F_3(t),F_4(t)\big)\in\Fmc_4^+$ is the quadruple of flags corresponding to $\big(M(t),M'(t)\big)$, then 
\[C\big(l_1(t),p_2(t),p_4(t),\overline{p_1(t)p_3(t)}\big)=e^t\cdot C\big(l_1,p_2,p_4,\overline{p_1p_3}\big),\] 
\[C\big(l_3(t),p_4(t),p_2(t),\overline{p_1(t)p_3(t)}\big)=e^{-t}\cdot C\big(l_3,p_4,p_2,\overline{p_1p_3}\big).\]
Here, recall that for $i=1,\dots,4$, $F_i(t)=\big(p_i(t),l_i(t)\big)$ and $F_i=(p_i,l_i)$. Also, $\overline{p_1(t)p_3(t)}$ is the projective line through $p_1(t)$ and $p_3(t)$, and $\overline{p_1p_3}$ is the projective line through $p_1$ and $p_3$.

\subsection{Deforming an $n$-tuple of flags}
We now extend the eruption, shearing and bulging flows to pairwise commuting flows on $\Fmc_n^+$ and $\Fmc_n^+/\PGL(3,\Rbbb)$ so that the $\Rbbb^m$ action on $\Fmc_n^+/\PGL(3,\Rbbb)$ is transitive.

Let $(N,N')=\big((p'_1,l'_1),\dots,(p'_n,l'_n)\big)\in\Fmc_n^+$. Choose a triangulation of $N$ as we did in Section \ref{polygonsection}, so that the set of vertices of the triangulation is $\{p'_1,\dots,p'_n\}$. Let $a_{i,j}$ be any edge of this triangulation with endpoints $p'_i,p'_j$. The labelling of the vertices of $N$ induces an orientation on $N$, so $a_{i,j}$ cuts both $N$ and $N'$ into two labelled polygons. (These two polygons are always non-empty in the case of $N'$, and they are non-empty in the case of $N$ if and only if $a_{i,j}$ is not a boundary segment of $N$.) Let $N_L$, $N_R$, $N_L'$, $N_R'$ be the labelled polygons, so that $N_L$ and $N_L'$ lie on the left of $a_{i,j}$, $N_R$ and $N_R'$ lie on the right of $a_{i,j}$, $N_L\cup N_R=N$ and $N_L'\cup N_R'=N'$. 

Let $v_i, v_j,v_{i,j}$ be non-zero vectors that span $p'_i$, $p'_j$, $l_i'\cap l'_j$ respectively. Then let $s_{i,j}(t),b_{i,j}(t)\in\PGL(3,\Rbbb)$ be the projective transformations that are represented by the matrices $s(t)$, $b(t)$ as above with respect to the basis $\{v_i,v_{i,j},v_j\}$. Set 
\begin{eqnarray*}
N_{i,j}(t):=\big(s_{i,j}(t)\cdot N_L\big)\cup\big(s_{i,j}(-t)\cdot N_R\big),\\
N_{i,j}'(t):=\big(s_{i,j}(t)\cdot N_L'\big)\cup\big(s_{i,j}(-t)\cdot N_R'\big),\\
M_{i,j}(t):=\big(b_{i,j}(t)\cdot N_L\big)\cup\big(b_{i,j}(-t)\cdot N_R\big),\\
M_{i,j}'(t):=\big(b_{i,j}(t)\cdot N_L'\big)\cup\big(b_{i,j}(-t)\cdot N_R'\big).
\end{eqnarray*}
Then $s_{i,j}(t)$ and $b_{i,j}(t)$ both fix $p'_i$, $p'_j$ and stabilize $a_{i,j}$, $l'_i$, $l'_j$, and the pairs $\big(N_{i,j}(t),N_{i,j}'(t)\big)$ and $\big(M_{i,j}(t),M_{i,j}'(t)\big)$ are suitably nested, labelled $n$-gons.

\begin{definition}\
\begin{enumerate}
\item The \emph{shearing flow on $\Fmc_n^+$ associated to $a_{i,j}$} is the flow $(\psi_{i,j})_t:\Fmc_n^+\to\Fmc_n^+$ defined by $(\psi_{i,j})_t:(N,N')\mapsto\big(N_{i,j}(t),N_{i,j}'(t)\big)$.
\item The \emph{bulging flow on $\Fmc_n^+$ associated to $a_{i,j}$} is the flow $(\beta_{i,j})_t:\Fmc_n^+\to\Fmc_n^+$ defined by $(\beta_{i,j})_t:(N,N')\mapsto\big(M_{i,j}(t),M_{i,j}'(t)\big)$.
\end{enumerate}
\end{definition}

Next, consider $i,j,k=1,\dots,n$ so that $i<j<k$. Let $p_1:=p'_i$, $p_2:=p'_j$, $p_3:=p'_3$, and note that the three edges $a_{i,j}$, $a_{j,k}$ and $a_{k,i}$ cut $N'$ into four polygons, one of which is a triangle $\Delta$ whose vertices are $p_1$, $p_2$, $p_3$. Let $M_1'$, $M_2'$, $M_3'$ be the other three polygons so that $M_1'\cup M_2'\cup M_3'\cup\Delta=N'$, enumerated so that $M_i'$ has $a_{i-1,i+1}$ as an edge. Also, let $u_1,u_2,u_3$ be as defined in Notation \ref{triple notation}. For $i=1,2,3$, let $T_i$ be the triangle in $\Delta$ whose vertices are $p_{i-1}$, $u_{i-1}$ and $p_{i+1}$ (See Figure \ref{notation picture}), and let $N_i':=M_i'\cup T_i$. Note that if $T$ is the triangle in $\Delta$ with vertices $u_1,u_2,u_3$, then $T\cup N_1'\cup N_2'\cup N_3'=N'$.

Similarly, the three edges $a_{i,j}$, $a_{j,k}$ and $a_{k,i}$ cut $N$ into four polygons (three of which might possibly be empty). As before, one of these polygons is $\Delta$. Let $M_1,M_2,M_3$ be the other three polygons so that $M_1\cup M_2\cup M_3\cup\Delta=N$, enumerated so that $M_i$ has $a_{i-1,i+1}$ as an edge. Then for $i=1,2,3$, let $N_i:=M_i\cup T_i$, and note that $T\cup N_1\cup N_2\cup N_3=N$ as well (see Figure \ref{NiMi}).

\begin{figure}[ht]
\centering
\includegraphics[scale=0.8]{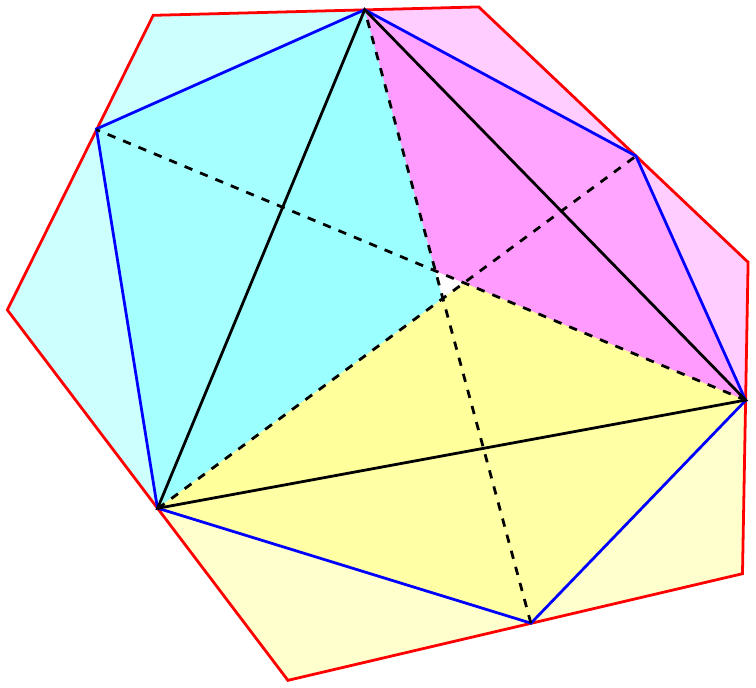}
\small
\put (-147, 37){$p_1$}
\put (-93, 160){$p_2$}
\put (0, 65){$p_3$}
\put (-60, 100){$T_1$}
\put (-72, 70){$T_2$}
\put (-100, 102){$T_3$}
\caption{$N$ is outlined in blue, $N'$ is outlined in red, $\Delta$ is outlined in black.  $N_1$, $N_1'$ are shaded in violet, $N_2$, $N_2'$ are shaded in yellow, $N_3$, $N_3'$ are shaded in turquoise, and $T$ is the white triangle.}\label{NiMi}
\end{figure}

Let $g_1(t), g_2(t), g_3(t)\in\PGL(3,\Rbbb)$ be the three group elements defined in Section \ref{deforming a triple of flags} (with $p_1:=p'_i$, $p_2:=p'_j$, $p_3:=p'_3$). Then define
\begin{eqnarray*}
N_{i,j,k}(t)&:=&\big(g_1(t)\cdot N_1\big)\cup\big(g_2(t)\cdot N_2\big)\cup\big(g_3(t)\cdot N_3\big)\cup T(t),\\
N_{i,j,k}'(t)&:=&\big(g_1(t)\cdot N_1'\big)\cup\big(g_2(t)\cdot N_2'\big)\cup\big(g_3(t)\cdot N_3\big)\cup T(t).
\end{eqnarray*}
As before, note that $\big(N_{i,j,k}(t),N'_{i,j,k}(t)\big)$ is a pair of suitably nested, labelled polygons. 

\begin{definition}
The \emph{eruption flow on $\Fmc_n^+$ associated to $p_i,p_j,p_k$} is the flow $(\epsilon_{i,j,k})_t:\Fmc_n^+\to\Fmc_n^+$ defined by $(\epsilon_{i,j,k})_t:(N,N')\mapsto\big(N_{i,j,k}(t),N_{i,j,k}'(t)\big)$.
\end{definition}

Let $(N,N')=\big((\overline{p}_1,\overline{l}_1),\dots,(\overline{p}_n,\overline{l}_n)\big)\in\Fmc_n^+$ and let $\Tmc$ be a triangulation of $N$ so that the vertices of the triangulation is $\{\overline{p}_1,\dots,\overline{p}_n\}$. Let $\Imc_\Tmc$ denote the set of internal edges of $\Tmc$ and let $\Theta_\Tmc$ denote the set of triangles of $\Tmc$. The eruption flows, shearing flows and bulging flows, associated to the triangulation of $N$ gives us a family of flows in $\Fmc_n^+$, which descend to flows on $\PGL(3,\Rbbb)\backslash\Fmc_n^+$. The descended flows have the following properties. 

\begin{prop}
 Consider the collection of flows on $\PGL(3,\Rbbb)\backslash\Fmc_n^+$
\[\Mmc(\Fmc_n^+):=\{\psi_{i,j}:a_{i+1}\in\Imc_\Tmc\}\cup\{\beta_{i,j}:a_{i+1}\in\Imc_\Tmc\}\cup\{\epsilon_{i,j,k}:\{a_{i,j},a_{j,k},a_{k,i}\}\in\Theta_\Tmc\}.\]
\begin{enumerate}
\item For any $\phi_1,\phi_2\in\Mmc(\Fmc_n^+)$ and any $t_1,t_2\in\Rbbb$, $(\phi_1)_{t_1}\circ(\phi_2)_{t_2}=(\phi_2)_{t_2}\circ(\phi_1)_{t_1}$ as flows on $\PGL(3,\Rbbb)\backslash\Fmc_n^+$.
\item For any pair $F_1,F_2\in\PGL(3,\Rbbb)\backslash\Fmc_n^+$, there is a sequence $\phi_1,\dots,\phi_l\in\Mmc(\Fmc_n^+)$ and a sequence $t_1,\dots,t_l\in\Rbbb$ so that $F_1=(\phi_1)_{t_1}\circ\dots\circ(\phi_l)_{t_l}(F_2)$.
\end{enumerate}
\end{prop}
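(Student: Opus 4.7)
The plan is to reduce both parts to an explicit coordinate computation. Using Proposition~\ref{finite parameterization}, identify $\PGL(3,\Rbbb)\backslash\Fmc_n^+$ with $\Rbbb^{2|\Imc_\Tmc|+|\Theta_\Tmc|}$ via the coordinates $(\sigma_{p,q},\tau_{p,q,r})$. Each flow in $\Mmc(\Fmc_n^+)$ is $\PGL(3,\Rbbb)$-equivariant on $\Fmc_n^+$ (its defining projective transformations depend only on projectively natural data from the flags), hence descends to a well-defined flow on the quotient. I would then compute the explicit action of each flow on these coordinates.

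The key observation is a \emph{locality principle}: each flow is piecewise projective, so any cross or triple ratio whose defining flags all lie in a single piece of the relevant decomposition is unchanged, by projective invariance. For the shearing $\psi_{i,j}$ or the bulging $\beta_{i,j}$ at the internal edge $a_{i,j}$, the decomposition is $(N_L,N_R)$, and every triangulation triangle (and every other internal edge) stays on one side; hence only $\sigma_{i,j}$ and $\sigma_{j,i}$ can change. Extending the 4-flag computations of Section~3 verbatim yields
\[
\psi_{i,j}:(\sigma_{i,j},\sigma_{j,i})\mapsto(\sigma_{i,j}-t,\sigma_{j,i}-t),\qquad \beta_{i,j}:(\sigma_{i,j},\sigma_{j,i})\mapsto(\sigma_{i,j}+t,\sigma_{j,i}-t).
\]
For the eruption $\epsilon_{i,j,k}$ at $T_{i,j,k}$, the decomposition is $(N_1,N_2,N_3,T)$, so the only potentially affected coordinates are $\tau_{i,j,k}$ together with the side cross ratios $\sigma_{i,j},\sigma_{j,i},\sigma_{j,k},\sigma_{k,j},\sigma_{k,i},\sigma_{i,k}$. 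The 3-flag analysis gives $\tau_{i,j,k}\mapsto\tau_{i,j,k}+t$; a direct calculation with the matrices $g_\ell(t)$ of Section~\ref{deforming a triple of flags} in the coordinates of Proposition~\ref{cross ratio and triple ratio} then yields
\[
\epsilon_{i,j,k}:\sigma_{i,j}\mapsto \sigma_{i,j}+\log\frac{e^{\tau_{i,j,k}}+e^{-t}}{e^{\tau_{i,j,k}}+1},
\]
and analogous formulas for $\sigma_{j,k}$ and $\sigma_{k,i}$. Crucially, each of these shifts depends only on the current value of $\tau_{i,j,k}$.

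With these formulas in hand, I would introduce the modified coordinates
\[
\widetilde{\sigma}_{i,j}:=\sigma_{i,j}-\log\bigl(1+e^{-\tau_{i,j,k}}\bigr)-\log\bigl(1+e^{-\tau_{i,j,k'}}\bigr),
\]
where $T_{i,j,k}$ and $T_{i,j,k'}$ are the two triangles of $\Tmc$ adjacent to $a_{i,j}$. A short check using the eruption formula above shows that every eruption acts trivially on $\widetilde{\sigma}_{i,j}$, while the shearings and bulgings continue to translate it by $\pm t$ exactly as they did for $\sigma_{i,j}$. In the new coordinates $(\widetilde{\sigma}_{p,q},\tau_{p,q,r})$ every flow in $\Mmc(\Fmc_n^+)$ is therefore an affine translation. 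Part (1) is immediate because translations in $\Rbbb^N$ commute. Part (2) is equally immediate: at each internal edge $a_{i,j}$ the directions $(-1,-1)$ and $(+1,-1)$ generated by $\psi_{i,j}$ and $\beta_{i,j}$ span the $(\widetilde{\sigma}_{i,j},\widetilde{\sigma}_{j,i})$-plane, and the eruptions span the remaining $\tau$-directions, so the flows together span $\Rbbb^{2|\Imc_\Tmc|+|\Theta_\Tmc|}$.

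The main obstacle is the derivation of the explicit eruption formula for the side cross ratios. It is not a priori clear that the induced shift in $\sigma_{i,j}$ depends only on $\tau_{i,j,k}$, and not also on $\sigma_{i,j}$ itself or on triple ratios of neighboring triangles; establishing this isolation requires a careful tracking of which of the matrices $g_1(t),g_2(t),g_3(t)$ acts on each of the four flags appearing in the definition of $\sigma_{i,j}$, together with the consistency identities $g_{\ell-1}(t)\cdot p_\ell=g_{\ell+1}(t)\cdot p_\ell$ along the shared boundaries of the pieces.
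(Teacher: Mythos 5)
Your overall strategy---push everything into the coordinates of Proposition~\ref{finite parameterization}, compute the action of each flow on $(\sigma,\tau)$, and read off commutativity and transitivity---is viable, and your locality principle, your formulas for $\psi_{i,j}$ and $\beta_{i,j}$, and the increment $\tau_{i,j,k}\mapsto\tau_{i,j,k}+t$ are all correct. (For part (1) the paper argues differently: it checks commutativity directly at the level of the piecewise-projective maps, as in the proof of Proposition~\ref{cross ratio facts}, and uses the coordinates only for transitivity; your coordinate route is a legitimate alternative.) The gap is in the eruption's effect on the edge invariants of its own triangle. Working in the basis spanning $q_1,q_2,q_3$, if $T_{1,2,3}$ is erupted, $a_{1,3}$ is one of its edges, and $\tau=\tau_{1,2,3}$, one finds
\[
\sigma_{1,3}\mapsto \sigma_{1,3}+\log\frac{e^{\tau}+e^{-t}}{e^{\tau}+1},
\qquad
\sigma_{3,1}\mapsto \sigma_{3,1}+\log\frac{e^{\tau}+1}{e^{\tau+t}+1}.
\]
The first matches your formula, but the second is a \emph{different} function of $(\tau,t)$: writing $h(\tau)=\log(1+e^{\tau})$, the two increments are $h(\tau+t)-h(\tau)-t$ and $h(\tau)-h(\tau+t)$. (Their sum is $-t$, so $\sigma_{1,3}+\sigma_{3,1}+\tau_{1,2,3}$ is constant along the eruption; note also that these six invariants really do move, so the boundary case of Proposition~\ref{triple ratio and cross ratio facts} cannot be invoked here.) Which of the two formulas governs a given oriented edge depends on whether the third vertex of the erupting triangle sits in the first or the second slot of that cross ratio, and you never record the second formula.

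This asymmetry breaks your linearization. With $\widetilde\sigma_{i,j}=\sigma_{i,j}-\log(1+e^{-\tau_{i,j,k}})-\log(1+e^{-\tau_{i,j,k'}})$, the eruption of one adjacent triangle is indeed neutralized, but the eruption of the other changes $\widetilde\sigma_{i,j}$ by $2\bigl(h(\tau')-h(\tau'+t)\bigr)+t$, which depends on the current value of $\tau'$; so the flows are not all translations in your coordinates and the one-line conclusions for (1) and (2) do not follow. Two repairs: (a) use the antisymmetric correction $\widetilde\sigma_{i,j}=\sigma_{i,j}-h(\tau_{T})+h(\tau_{T'})$, where $T$ is the adjacent triangle whose third vertex lies on the arc from $i$ to $j$ and $T'$ is the other one; then every eruption translates every coordinate by $-t$, $0$ or $+t$ and your argument goes through. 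Or (b) forgo linearization and verify commutativity directly from the transformation rules, using that distinct eruptions change disjoint $\tau$'s, that shears and bulges fix all $\tau$'s, and the cocycle identity $f(\tau,t_1)+f(\tau+t_1,t_2)=f(\tau,t_1+t_2)$ satisfied by both increment functions. Transitivity is unaffected by the error: match the $\tau$'s first with eruptions, then the $(\sigma_{i,j},\sigma_{j,i})$ with shears and bulges, whose directions $(-1,-1)$ and $(1,-1)$ span each plane.
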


The above proposition is a consequence of the analogous statements of the propositions in Section \ref{deformations of properly convex domains} for suitably nested, labelled polygons.

%%%%%%%%%%%%%%%%%%%%%%%%%%%%%%%%%%%%%%%%%%%%%%%%%%%%%%%%%%%%%%%%%%%%%%%%%%%%%%%%%%%%%%%%%%%%%%%%%%%%
\section{Deformations of properly convex domains with $C^1$ boundary} \label{deformations of properly convex domains}
%%%%%%%%%%%%%%%%%%%%%%%%%%%%%%%%%%%%%%%%%%%%%%%%%%%%%%%%%%%%%%%%%%%%%%%%%%%%%%%%%%%%%%%%%%%%%%%%%%%%
In this section we define elementary eruption, shear and bulge deformations for strictly convex domains with $C^1$ boundary. Let $\Dmc$ denote the space of marked strictly convex domains in $\Rbbb\Pbbb^2$ with $C^1$ boundary, i.e.
\[\Dmc:=\left\{(\xi,\Omega):\begin{array}{l}\Omega\subset\Rbbb\Pbbb^2\text{ is a strictly convex domain with }C^1\text{ boundary}\\
\xi:S^1\to\partial{\Omega}\text{ is a homeomorphism}\end{array}\right\}.\]

The set $\Dmc$ can be topologized so that the sequence $\{(\xi_i,\Omega_i)\}_{i=1}^\infty$ converges to $(\xi,\Omega)$ if and only if $\{\Omega_i\}_{i=1}^\infty$ converges to $\Omega$ in the topology generated by the Hausdorff distance, and $\{\xi_i\}_{i=1}^\infty$ converges to $\xi$ pointwise. Each point in $\Dmc$ can be approximated by a sequence $\{(N_n,N_n')\}_{n=3}^\infty$, where each $(N_n,N_n')\in\Fmc_n^+$. The goal of this section is to define elementary shearing, bulging and eruption flows on $\Dmc$.

We fix some notation. 
\begin{notation} \label{interval notation}
\begin{itemize}
\item For any triple of pairwise distinct points $x,y,z\in S^1$ in that order, let $[x,y]_z$ and $(x,y)_z$ be the closed and open subintervals of $S^1$ with endpoints $x$ and $y$ that does not contain $z$. 
\item For any properly convex domain $\Omega\subset\Rbbb\Pbbb^2$ and any $a,b\in\overline{\Omega}$, let $[a,b]$ and $(a,b)$ denote the closed and open oriented projective line segments in $\Omega$ with $a$ and $b$ as backward and forward endpoints respectively.
\item For any $p,q\in\Rbbb\Pbbb^2$, let $\overline{pq}$ denote the projective line through $p$ and $q$.
\item For any $(\xi,\Omega)\in\Dmc$ and for any $x\in S^1$, let $\xi^*(x)$ be the tangent line to $\partial\Omega$ at $\xi(x)$.
\end{itemize}
\end{notation}

\subsection{Shearing and bulging flows}
Let $x,y\in S^1$ be a pair of distinct points. Then for all $(\xi,\Omega)\in\Dmc$, $[\xi(x),\xi(y)]$ cuts $\Omega$ into two properly convex subdomains $\Omega_{x,y,R}$ and $\Omega_{x,y,L}$, where $\Omega_{x,y,L}$ is the subdomain of $\Omega$ on the left of $[\xi(x),\xi(y)]$, and $\Omega_{x,y,R}$ is the one on the right of $[\xi(x),\xi(y)]$ (see Figure \ref{C1domain2}). Here, the orientation on $\Omega$ is induced by the homeomorphism $\xi$. 

Also, let $p_{x,y}$ be the point of intersection between $\xi^*(x)$ and $\xi^*(y)$, and let $v_x, v_y, v_{x,y}\in\Rbbb^3$ be non-zero vectors that span $\xi(x)$, $\xi(y)$ and $p_{x,y}$ respectively. As before, let $s_{x,y}(t),b_{x,y}(t)\in\PGL(3,\Rbbb)$ be the projective transformations that are represented by the matrices 
\[s_{x,y}(t):=\left[\begin{array}{ccc}
e^\frac{t}{2}&0&0\\
0&1&0\\
0&0&e^{-\frac{t}{2}}
\end{array}\right],\,\,
b_{x,y}(t):=\left[\begin{array}{ccc}
e^{-\frac{t}{6}}&0&0\\
0&e^{\frac{t}{3}}&0\\
0&0&e^{-\frac{t}{6}}
\end{array}\right],\]
respectively in the basis $\{v_x,v_{x,y},v_y\}$. 

Note that $s_{x,y}(t_1)b_{x,y}(t_2)=s_{x,y}(t_1)b_{x,y}(t_2)$ for every $t_1,t_2\in\Rbbb$. Also, it is easy to see that every projective transformation that fixes $\xi(x)$, $\xi(y)$ and $p_{x,y}$ can be written as uniquely as $s_{x,y}(t_1)b_{x,y}(t_2)$ for some $t_1,t_2\in\Rbbb$. For any such projective transformation $g$, define
\[\Omega_g:=\big(\xi(x),\xi(y)\big)\cup \left(g\cdot\Omega_{x,y,L}\right)\cup \left(g^{-1}\cdot\Omega_{x,y,R}\right).\]
Observe that $\Omega_g$ is a strictly convex domain with $C^1$ boundary. 

Let $B_R$ and $B_L$ be the two connected components of $S^1\setminus\{x,y\}$ so that for any $(\xi,\Omega)\in\Dmc$, $\xi(B_R)$ and $\xi(B_L)$ are the subsegments in the boundary of $\Omega_{x,y,R}$ and $\Omega_{x,y,L}$ respectively. Then define $\xi_g:S^1\to\partial\Omega_g$ by
\[\xi_g(a)=\left\{\begin{array}{ll}
g\circ \xi(a)&\text{if }a\in B_L\\
g^{-1}\circ \xi(a)&\text{if }a\in B_R\\
\xi(a)&\text{if }a=x,y
\end{array}\right.\]
Clearly, $\xi_g$ is continuous, so we can define the shearing flows and bulging flows on $\Dmc$ in the following way.

\begin{definition}
Let $x,y\in S^1$. 
\begin{enumerate}
\item The \emph{elementary shearing flow on $\Dmc$ associated to $(x,y)$} is the flow $(\psi_{x,y})_t:\Dmc\to\Dmc$ defined by  $(\psi_{x,y})_t:(\xi,\Omega)\mapsto(\xi_{s_{x,y}(t)},\Omega_{s_{x,y}(t)})$.
\item The \emph{elementary bulging flow on $\Dmc$ associated to $(x,y)$} is the flow $(\beta_{x,y})_t:\Dmc\to\Dmc$ defined by  $(\beta_{x,y})_t:(\xi,\Omega)\mapsto(\xi_{b_{x,y}(t)},\Omega_{b_{x,y}(t)})$.
\end{enumerate}
\end{definition}

Note that for all $g\in\PGL(3,\Rbbb)$, for all $x,y\in S^1$, and for all $t\in\Rbbb$, \[g\circ(\psi_{x,y})_t(\xi,\Omega)=(\psi_{x,y})_t\circ g(\xi,\Omega)\,\,\,\text{ and }\,\,\,g\circ(\beta_{x,y})_t(\xi,\Omega)=(\beta_{x,y})_t\circ g(\xi,\Omega).\]
This implies that the shearing and bulging flows descend to flows on $\PGL(3,\Rbbb)\backslash\Dmc$. We also denote the descended shearing and bulging flows by $\psi_{x,y}$ and $\beta_{x,y}$ respectively; it should be clear from context which we are referring to.

\begin{figure}[ht]
\centering
\includegraphics[scale=0.5]{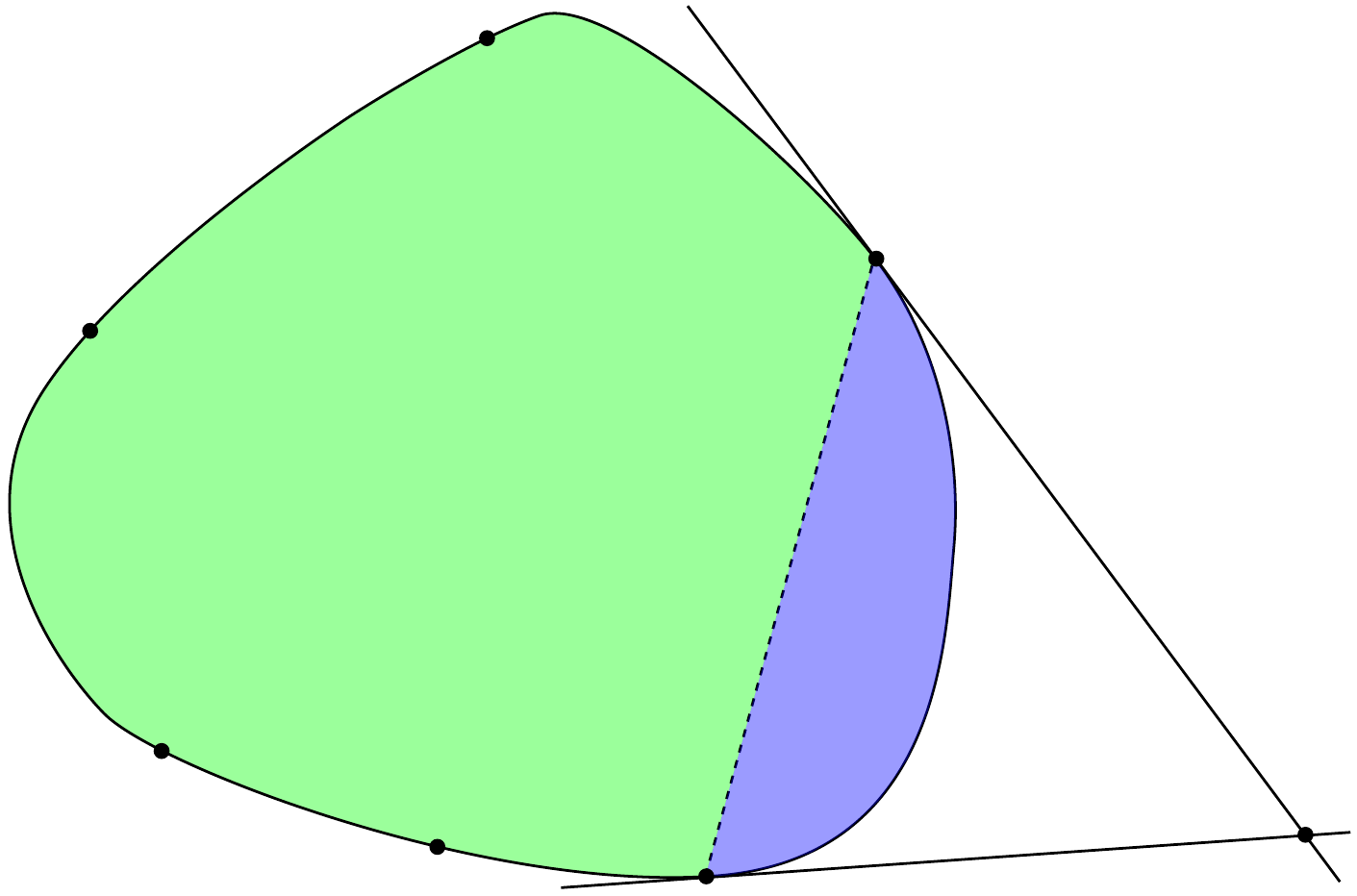}
\small
\put (-102, -5){$\xi(x_2)$}
\put (-72, 99){$\xi(x_1)$}
\put (-159, 0){$\xi(y_1)$}
\put (-200, 14){$\xi(z_1)$}
\put (-211, 88){$\xi(y_2)$}
\put (-147, 134){$\xi(z_2)$}
\put (-8, 13){$p_{x_1,x_2}$}
\caption{$\Omega_{x_1,x_2,L}$ and $\Omega_{x_1,x_2,R}$ are the regions shaded in blue and green respectively.}\label{C1domain2}
\end{figure}

\begin{prop}\label{cross ratio facts}
Let $x_1<x_2\leq y_1<z_1<y_2<z_2\leq x_1$ be points along $S^1$ in this cyclic order (see Figure \ref{C1domain2}). Also, let $(\xi,\Omega)\in\Dmc$, and let $[\xi,\Omega]\in\PGL(3,\Rbbb)\backslash\Dmc$ be the equivalence class containing $(\xi,\Omega)$. 
\begin{enumerate}
\item For all $\phi_1,\phi_2\in\{\psi_{x_1,x_2},\beta_{x_1,x_2},\psi_{y_1,y_2},\beta_{y_1,y_2}\}$ and for all $t_1,t_2\in\Rbbb$, we have that 
\[(\phi_1)_{t_1}\circ(\phi_2)_{t_2}[\xi,\Omega]=(\phi_2)_{t_2}\circ(\phi_1)_{t_1}[\xi,\Omega].\]
\item Let $t\in\Rbbb$, and let $(\xi_1,\Omega_1):=(\psi_{x_1,x_2})_t(\xi,\Omega)$ or $(\beta_{x_1,x_2})_t(\xi,\Omega)$. Then
\[C\big(\xi^*(y_1),\xi(z_1),\xi(z_2),\overline{\xi(y_1)\xi(y_2)}\big)=C\big(\xi_1^*(y_1),\xi_1(z_1),\xi_1(z_2),\overline{\xi_1(y_1)\xi_1(y_2)}\big).\]
\end{enumerate}
\end{prop}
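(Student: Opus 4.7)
I would split the argument along the two claims.

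For (2), the key observation is that since $x_1 < x_2 \leq y_1 < z_1 < y_2 < z_2 \leq x_1$ in the cyclic order, all four points $y_1, z_1, y_2, z_2$ lie in the single connected component of $S^1 \setminus \{x_1,x_2\}$ that does not contain the short arc between $x_1$ and $x_2$. This arc is entirely one of $B_L$ or $B_R$, so $(\psi_{x_1,x_2})_t$ (respectively $(\beta_{x_1,x_2})_t$) applies a \emph{single} element $g \in \PGL(3,\Rbbb)$ (either $s_{x_1,x_2}(t)^{\pm 1}$ or $b_{x_1,x_2}(t)^{\pm 1}$) uniformly to all of $\xi(y_1), \xi(z_1), \xi(y_2), \xi(z_2)$. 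Because $g\cdot \Omega$ agrees with $\Omega_g$ on this side of the cutting chord, the tangent line to $\partial \Omega_g$ at $\xi_g(y_1)$ is $g \cdot \xi^*(y_1)$, and the chord joining the $y_i$-images is $g \cdot \overline{\xi(y_1)\xi(y_2)}$. Projective invariance of the cross ratio (Proposition~\ref{triple ratio level set} style, as recorded in Section~\ref{sec:crossratio}) then closes the argument.

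For (1), first observe that the flows $\psi_{x_1,x_2}$ and $\beta_{x_1,x_2}$ associated to a common chord already commute on $\Dmc$ (not merely on the quotient), since the matrices $s_{x_1,x_2}(t)$ and $b_{x_1,x_2}(r)$ are simultaneously diagonal in the basis $\{v_{x_1}, v_{x_1,x_2}, v_{x_2}\}$. The substantive case is a flow based at $(x_1,x_2)$ paired with a flow based at $(y_1,y_2)$. The cyclic-order hypothesis forces the chords $[\xi(x_1),\xi(x_2)]$ and $[\xi(y_1),\xi(y_2)]$ to be disjoint except possibly at a shared endpoint, and hence to partition $\Omega$ into three properly convex pieces: a region $A_1$ bounded by $[\xi(y_1),\xi(y_2)]$ and the arc through $\xi(z_1)$; a middle region $A_2$ bounded by both chords and the arc through $\xi(z_2)$; and a region $B$ bounded by $[\xi(x_1),\xi(x_2)]$ and the short arc between $\xi(x_1)$ and $\xi(x_2)$.

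Write $h$ for the flow element at $(x_1,x_2)$ and $k$ for the flow element at $(y_1,y_2)$. After applying $(\phi_{x_1,x_2})_{t_1}$ first, the data defining $(\phi_{y_1,y_2})_{t_2}$ (the points $\xi(y_i)$, their tangents, and the intersection $p_{y_1,y_2}$) have been moved by a power of $h$, so that the matrix implementing the second flow on the deformed domain becomes $h^{\pm 1} k h^{\mp 1}$. Tracking which of $h^{\pm 1}$, $k^{\pm 1}$, or their conjugates is applied to each of $A_1$, $A_2$, $B$ in the two orderings shows that the resulting composite transformations on each piece differ globally by the commutator $[h^{-1}, k^{-1}]$, an element of $\PGL(3,\Rbbb)$. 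The two compositions therefore agree in $\PGL(3,\Rbbb)\backslash \Dmc$.

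The main bookkeeping obstacle is in the last step: one must correctly identify which side $L$ or $R$ of each cutting chord is acted on by which power of $h$ or $k$, and propagate the change-of-basis conjugations that arise because the defining basis for the second flow is itself moved by the first flow. Once this is done the commutator identity reduces to a routine matrix computation, and the boundary cases $x_2 = y_1$ or $z_2 = x_1$ (in which $A_2$ or $B$ degenerates) follow either by continuity or by the same argument with a collapsed piece.
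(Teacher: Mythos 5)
Your proposal is correct and follows essentially the same route as the paper: part (2) is exactly the observation that all four flags at $y_1,z_1,y_2,z_2$ are moved by a single projective transformation (so the cross ratio is unchanged by projective invariance), and part (1) is the same piecewise bookkeeping — the second flow's implementing element gets conjugated by the first, and the two compositions differ by the commutator $g_1^{-1}g_2^{-1}g_1g_2 \in \PGL(3,\Rbbb)$, which is precisely what the paper computes on its four boundary arcs (your three regions $A_1, A_2, B$ correspond to the paper's arcs $[y_1,y_2]_{x_1}$, $[x_2,y_1]_{y_2}\cup[y_2,x_1]_{x_2}$, and $[x_1,x_2]_{y_1}$). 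The only cosmetic slip is your parenthetical about $A_2$ or $B$ degenerating in the boundary cases $x_2=y_1$ or $z_2=x_1$ — neither region actually degenerates there, and the argument goes through verbatim since the flow elements fix the flags at the chord endpoints.
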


\begin{proof}
First, we will prove (1). Let $(\xi_1,\Omega_1):=(\phi_2)_{t_2}(\xi,\Omega)$, $(\xi_2,\Omega_2):=(\phi_1)_{t_1}(\xi_1,\Omega_1)$, $(\xi_3,\Omega_3):=(\phi_1)_{t_1}(\xi,\Omega)$, and $(\xi_4,\Omega_4):=(\phi_2)_{t_2}(\xi_3,\Omega_3)$. We need to show that there is some projective transformation $g\in\PGL(3,\Rbbb)$ so that $g\circ\xi_4=\xi_2$. It is sufficient to do so on the four subsegments $[x_1,x_2]_{y_1}$, $[x_2,y_1]_{y_2}$, $[y_1,y_2]_{x_1}$ and $[y_2,x_1]_{x_2}$. 

Let $g_1,g_2\in\PGL(3,\Rbbb)$ be the projective transformations so that 
\begin{eqnarray*}
\xi_1|_{[x_1,x_2]_{y_1}}=g_1\circ\xi|_{[x_1,x_2]_{y_1}},&& \xi_1|_{\partial\Gamma\setminus[x_1,x_2]_{y_1}}=g_1^{-1}\circ\xi|_{\partial\Gamma\setminus[x_1,x_2]_{y_1}},\\
\xi_3|_{[y_1,y_2]_{x_1}}=g_2\circ\xi|_{[y_1,y_2]_{x_1}},&&\xi_3|_{\partial\Gamma\setminus[y_1,y_2]_{x_1}}=g_2^{-1}\circ\xi|_{\partial\Gamma\setminus[y_1,y_2]_{x_1}}.
\end{eqnarray*}
Observe then that
\begin{eqnarray*}
\xi_2|_{[y_1,y_2]_{x_1}}&=&g_1^{-1}\circ g_2\circ g_1\circ\xi_1|_{[y_1,y_2]_{x_1}},\\
\xi_2|_{\partial\Gamma\setminus[y_1,y_2]_{x_1}}&=&g_1^{-1}\circ g_2^{-1}\circ g_1\circ\xi_1|_{\partial\Gamma\setminus[y_1,y_2]_{x_1}},\\
\xi_4|_{[x_1,x_2]_{y_1}}&=&g_2^{-1}\circ g_1\circ g_2\circ\xi_3|_{[x_1,x_2]_{y_1}},\\
\xi_4|_{\partial\Gamma\setminus[x_1,x_2]_{y_1}}&=&g_2^{-1}\circ g_1^{-1}\circ g_2\circ\xi_3|_{\partial\Gamma\setminus[x_1,x_2]_{y_1}},
\end{eqnarray*}
which implies that
\begin{eqnarray*}
\xi_2|_{[x_1,x_2]_{y_1}}=g_1^{-1}\circ g_2^{-1}\circ g_1^2\circ\xi|_{[x_1,x_2]_{y_1}},&&\xi_2|_{[x_2,y_1]_{y_2}}=g_1^{-1}\circ g_2^{-1}\circ\xi|_{[x_2,y_1]_{y_2}},\\
\xi_2|_{[y_1,y_2]_{x_1}}=g_1^{-1}\circ g_2\circ\xi|_{[y_1,y_2]_{x_1}},&&
\xi_2|_{[y_2,x_1]_{x_2}}=g_1^{-1}\circ g_2^{-1}\circ\xi|_{[y_2,x_1]_{x_2}},\\
\xi_4|_{[x_1,x_2]_{y_1}}=g_2^{-1}\circ g_1\circ\xi|_{[x_1,x_2]_{y_1}},&&
\xi_4|_{[x_2,y_1]_{y_2}}=g_2^{-1}\circ g_1^{-1}\circ\xi|_{[x_2,y_1]_{y_2}},\\
\xi_4|_{[y_1,y_2]_{x_1}}=g_2^{-1}\circ g_1^{-1}\circ g_2^2\circ\xi|_{[y_1,y_2]_{x_1}},&&
\xi_4|_{[y_2,x_1]_{x_2}}=g_2^{-1}\circ g_1^{-1}\circ\xi|_{[y_2,x_1]_{x_2}}.
\end{eqnarray*}
In all cases, $g_1^{-1}\circ g_2^{-1}\circ g_1\circ g_2\circ\xi_4=\xi_2$.

To see (2), one only needs to observe that the flows $(\psi_{x_1,x_2})_t$ and $(\beta_{x_1,x_2})_t$ change the flags $\big(\xi(y_1),\xi^*(y_1)\big)$, $\big(\xi(y_2),\xi^*(y_2)\big)$, $\big(\xi(z_1),\xi^*(z_1)\big)$, $\big(\xi(z_2),\xi^*(z_2)\big)$ by the same projective transformation. 
\end{proof}

For any $(\xi,\Omega)\in\Dmc$ and any $y_1<z_1<y_2<z_2<y_1$ along $S^1$ in this cyclic order, we have $ C\big(\xi^*(y_1),\xi(z_1),\xi(z_2),\overline{\xi(y_1)\xi(y_2)}\big)<0$ and can thus define
\[\sigma_\xi(y_1,z_1,z_2,y_2):=\log\Big(-C\big(\xi^*(y_1),\xi(z_1),\xi(z_2),\overline{\xi(y_1)\xi(y_2)}\big)\Big).\]

The next proposition states how certain cross ratios change when we perform the shearing and bulging flows. The proof is a straightforward calculation, which we omit.

\begin{prop} Let $y_1\neq y_2$ be points along $S^1$, and for $i=L,R$, let $z_i\in B_i$ with respect to the oriented line segment $[y_1,y_2]$. Also, let $t\in\Rbbb$, let $(\xi_1,\Omega_1):=(\psi_{y_1,y_2})_t(\xi,\Omega)$, and let $(\xi_2,\Omega_2):=(\beta_{y_1,y_2})_t(\xi,\Omega)$. Then
\begin{enumerate}
\item $\sigma_{\xi_1}(y_1,z_L,z_R,y_2)=\sigma_{\xi}(y_1,z_L,z_R,y_2)-t,$
\item $\sigma_{\xi_1}(y_2,z_R,z_L,y_1)=\sigma_{\xi}(y_2,z_R,z_L,y_1)-t,$
\item $\sigma_{\xi_2}(y_1,z_L,z_R,y_2)=\sigma_{\xi}(y_1,z_L,z_R,y_2)+t,$
\item $\sigma_{\xi_2}(y_2,z_R,z_L,y_1)=\sigma_{\xi}(y_2,z_R,z_L,y_1)-t,$
\end{enumerate}
\end{prop}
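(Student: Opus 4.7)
The plan is to do a direct four-fold computation in a basis adapted to $\xi(y_1), \xi(y_2)$ and the intersection point $p_{y_1,y_2} := \xi^*(y_1) \cap \xi^*(y_2)$ of their tangent lines. The key observation, which makes the bookkeeping essentially trivial, is that the projective transformations $s_{y_1,y_2}(t)$ and $b_{y_1,y_2}(t)$ are \emph{diagonal} in this basis, and they \emph{fix} each of the three lines that appear in the two cross ratios we care about.

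Concretely, I would fix non-zero vectors $v_{y_1}, v_{y_1,y_2}, v_{y_2}$ spanning $\xi(y_1), p_{y_1,y_2}, \xi(y_2)$ respectively. In the resulting projective coordinates $\xi(y_1) = [1:0:0]$, $p_{y_1,y_2} = [0:1:0]$, $\xi(y_2) = [0:0:1]$, and the three relevant lines become coordinate covectors
\[
\xi^*(y_1) = [0:0:1],\quad \xi^*(y_2) = [1:0:0],\quad \overline{\xi(y_1)\xi(y_2)} = [0:1:0].
\]
Writing $\xi(z_L) = (a_L, b_L, c_L)$ and $\xi(z_R) = (a_R, b_R, c_R)$ and plugging into the defining formula $C(l_1, p_2, p_3, l_4) = l_1(p_3)l_4(p_2)/l_1(p_2)l_4(p_3)$, the two cross ratios entering the four identities collapse to simple coordinate ratios of the form $c_R b_L / c_L b_R$ and $a_L b_R / a_R b_L$ respectively.

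The payoff of the adapted basis is now that $s_{y_1,y_2}(t)$ and $b_{y_1,y_2}(t)$ fix each of the three coordinate covectors above, so the three lines appearing in the cross ratios do not move along either flow. Hence to compute $\sigma_{\xi_i}$ one only needs to apply the diagonal matrices $s(\pm t) = \mathrm{diag}(e^{\pm t/2}, 1, e^{\mp t/2})$ and $b(\pm t) = \mathrm{diag}(e^{\mp t/6}, e^{\pm t/3}, e^{\mp t/6})$ to the coordinates of $\xi(z_L)$ and $\xi(z_R)$; by the definition of $\xi_g$ on $B_L$ and $B_R$, the point corresponding to $B_L$ is hit by $g$ and the one corresponding to $B_R$ by $g^{-1}$. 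Substituting into the two coordinate ratios above, each of the four cases collapses to an overall factor $e^{\pm t}$, and taking $\log$ of the negative yields the additive shifts $\pm t$ in identities (1)--(4).

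The only real subtlety is the sign bookkeeping: one must match each of the four ratios with the correct exponent by carefully tracking whether $\xi(z_L)$ or $\xi(z_R)$ sits in the numerator versus the denominator of each coordinate ratio, and whether it is scaled by $g$ or $g^{-1}$. The asymmetry between identities (3) and (4), in contrast to the symmetric pair (1)--(2), is forced by the structural difference between the two diagonal matrices: the outer entries of $s(t)$ are reciprocal (so both ratios transform in parallel), while the outer entries of $b(t)$ are equal (so the middle-coordinate factor $e^{\pm t/3}$ contributes with the same sign to the numerator-and-denominator of one ratio and with opposite signs in the other). A final routine check shows that the cross ratios remain negative along the flow — since the flow keeps $(\xi_i,\Omega_i) \in \Dmc$ — so that $\log(-\cdot)$ stays real throughout.
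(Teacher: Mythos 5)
Your method is exactly the computation the paper has in mind (the paper explicitly omits the proof as ``a straightforward calculation''): pass to the basis $\{v_{y_1},v_{y_1,y_2},v_{y_2}\}$, observe that the three relevant lines become coordinate covectors fixed by the diagonal transformations, and read off the effect of $g^{\pm1}$ on the two coordinate ratios $c_Rb_L/(c_Lb_R)$ and $a_Lb_R/(a_Rb_L)$. Your structural explanation of why the shear shifts both parameters in parallel while the bulge shifts them oppositely is also correct and is the real content: for $g=\mathrm{diag}(\lambda,\mu,\nu)$ applied to $\xi(z_L)$ (with $g^{-1}$ applied to $\xi(z_R)$), the two cross ratios are multiplied by $(\mu/\nu)^2$ and $(\lambda/\mu)^2$ respectively, and these coincide for $s(t)$ but are reciprocal for $b(t)$.

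The one step you must not defer is precisely the one you defer, namely ``the sign bookkeeping,'' because carrying it out does not reproduce the statement verbatim. With the literal conventions ($\xi_g(z_L)=g\cdot\xi(z_L)$, $\xi_g(z_R)=g^{-1}\cdot\xi(z_R)$, $s(t)=\mathrm{diag}(e^{t/2},1,e^{-t/2})$ in the basis above) one gets $(\mu/\nu)^2=(\lambda/\mu)^2=e^{t}$, so identities (1) and (2) come out as $+t$ rather than $-t$, whereas (3) and (4) come out exactly as stated ($e^{t}$ and $e^{-t}$). Swapping which of $z_L,z_R$ receives $g$ repairs (1)--(2) but then flips (3)--(4), so no single orientation convention yields all four printed signs; the shear identities hold only after replacing $s(t)$ by $s(-t)$ (equivalently, reversing the orientation of $[y_1,y_2]$ for the shear alone). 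The same tension already appears between the paper's displayed $\Fmc_4^+$ formulas for $C\big(l_1(t),p_2(t),p_4(t),\overline{p_1(t)p_3(t)}\big)$ under $N(t)$ versus under $M(t)$. So your write-up should finish the computation explicitly and record the resulting exponents, rather than asserting that they ``yield the additive shifts $\pm t$ in identities (1)--(4)''; as it stands, that sentence assumes the conclusion at the only point where something could (and in fact does) go wrong.
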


\subsection{Eruption flows}
Fix any $x_1,x_2,x_3\in S^1$ that are pairwise distinct. For any $(\xi,\Omega)\in\Dmc$ and for $i=1,2,3$, let $p_i:=\xi(x_i)$ and let $l_i:=\xi^*(x_i)$. Then let $u_1,u_2,u_3$ be as defined in Notation \ref{triple notation}, and for all $i=1,2,3$, let $\Omega_i$ be the subdomain in $\Omega$ bounded by $[p_{i-1},p_{i+1}]_{p_i}$, $[p_{i-1},u_i]$ and $[u_i,p_{i+1}]$ (recall that arithmetic in the subscripts are done modulo 3). Observe that there is a unique triangle $T$ so that
\[\Omega=\Omega_1\cup\Omega_2\cup\Omega_3\cup T.\]
More concretely, $T$ is the triangle in $\Omega$ with vertices $u_1,u_2,u_3$ (see Figure \ref{C1domain}).

\begin{figure}[ht]
\centering
\includegraphics[scale=0.5]{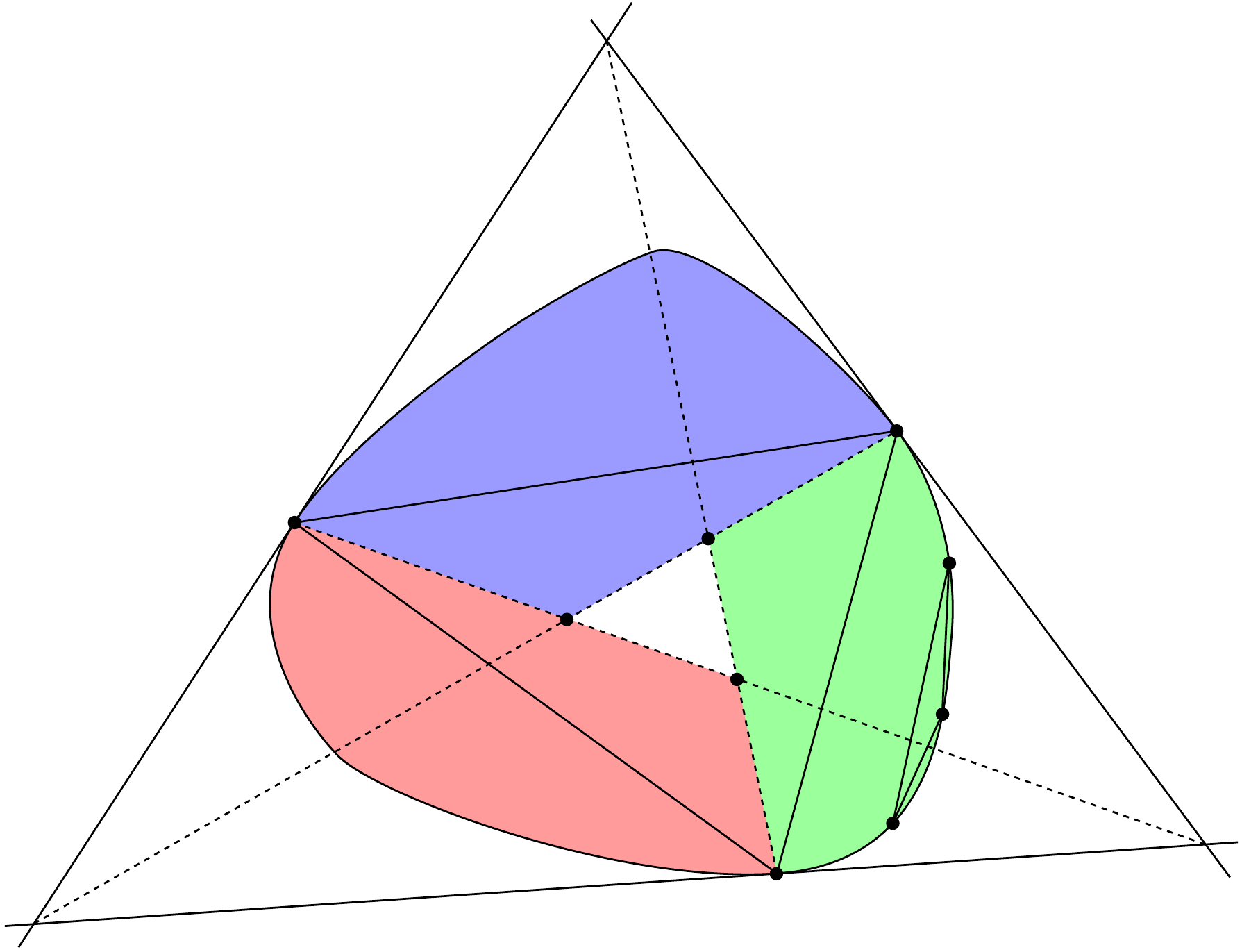}
\small
\put (-100, 10){$p_1=\xi(x_1)$}
\put (-240, 92){$p_2=\xi(x_2)$}
\put (-69, 110){$p_3=\xi(x_3)$}
\put (-58, 78){$\xi(y_1)$}
\put (-60, 46){$\xi(y_2)$}
\put (-70, 23){$\xi(y_3)$}
\put (-145, 73){$u_1$}
\put (-109, 82){$u_2$}
\put (-115, 53){$u_3$}
\caption{$\Omega_1$, $\Omega_2$ and $\Omega_3$ are the regions shaded in blue, green and red respectively.}\label{C1domain}
\end{figure}

Let $g_1(t),g_2(t),g_3(t)\in\PGL(3,\Rbbb)$ be group elements as defined in Section \ref{deforming a triple of flags}. It is an easy computation to check that for all $i=1,2,3$, $g_{i+1}(t)\cdot p_i=g_{i-1}(t)\cdot p_i$, and $g_i(t)\cdot u_i$ lies on $g_i(t)\cdot w_{i-1}$. These together imply that there is a unique triangle $T(t)$ so that 
\[\Omega_{x_1,x_2,x_3,t}:=\big(g_1(t)\cdot\Omega_1\big)\cup \big(g_2(t)\cdot\Omega_2\big)\cup \big(g_3(t)\cdot\Omega_3\big)\cup T(t)\]
is a strictly convex domain with $C^1$ boundary. Furthermore, for $i=1,2,3$, the tangent lines to $\partial\Omega_{x_1,x_2,x_3,t}$ at $g_i(s)\cdot \xi(x_i)$ is $l_i$.

Also, let $\xi_{x_1,x_2,x_3,t}:S^1\to\partial\Omega_{p_1.p_2.p_3,t}$ be the map defined by
\[\xi_{x_1,x_2,x_3,t}(a)=\left\{\begin{array}{ll}
g_1(t)\circ \xi(a)&\text{if }a\in [x_2,x_3]_{x_1}\\
g_2(t)\circ \xi(a)&\text{if }a\in [x_3,x_1]_{x_2}\\
g_3(t)\circ \xi(a)&\text{if }a\in [x_1,x_2]_{x_3}\\
\end{array}\right.,\]
and note that $\xi_{x_1,x_2,x_3,t}$ is well-defined and continuous. 

\begin{definition}
Let $x_1,x_2,x_3\in S^1$ be triple of pairwise distinct points. The \emph{elementary eruption flow on $\Dmc$ associated to $x_1,x_2,x_3$} is the flow $(\epsilon_{x_1,x_2,x_3})_t:\Dmc\to\Dmc$ defined by $(\epsilon_{x_1,x_2,x_3})_t:(\xi,\Omega)\mapsto(\xi_{x_1,x_2,x_3,t},\Omega_{x_1,x_2,x_3,t})$
\end{definition}

Like the shearing and bulging flows, we also have that for all $g\in\PGL(3,\Rbbb)$ and all $x_1,x_2,x_3\in S^1$ that are pairwise distinct, $g\circ(\epsilon_{x_1,x_2,x_3})_t=(\epsilon_{x_1,x_2,x_3})_t\circ g$. Thus, the eruption flow also descends to a flow on $\PGL(3,\Rbbb)\backslash\Dmc$, which we also denote by $(\epsilon_{x_1,x_2,x_3})_t:\PGL(3,\Rbbb)\backslash\Dmc\to\PGL(3,\Rbbb)\backslash\Dmc$ and refer to as the \emph{eruption flow} on $\PGL(3,\Rbbb)\backslash\Dmc$. 

The next two propositions state how the triple ratios change under the eruption flow. The proofs are a variation of the proof of Proposition \ref{cross ratio facts}, so we will leave them to the reader.

\begin{prop}\label{triple ratio facts}
Let $x_1<x_2<x_3\leq y_1<y_2<y_3\leq x_1$ lie along $S^1$ in this cyclic order, let $t_1,t_2\in\Rbbb$, and let $[\xi,\Omega]\in\PGL(3,\Rbbb)\backslash\Dmc$ be the equivalence class containing $(\xi,\Omega)\in\Dmc$ (see Figure \ref{C1domain}). The following statements hold:
\begin{enumerate}
\item $\displaystyle(\epsilon_{x_1,x_2,x_3})_{t_1}\circ(\epsilon_{y_1,y_2,y_3})_{t_2}[\xi,\Omega]=(\epsilon_{y_1,y_2,y_3})_{t_2}\circ(\epsilon_{x_1,x_2,x_3})_{t_1}[\xi,\Omega]$.
\item For any $\phi\in\{\psi_{x_1,x_3},\beta_{x_1,x_3}\}$, we have that 
\[\phi_{t_1}\circ(\epsilon_{y_1,y_2,y_3})_{t_2}[\xi,\Omega]=(\epsilon_{y_1,y_2,y_3})_{t_2}\circ\phi_{t_1}[\xi,\Omega].\]
\item Let $t\in\Rbbb$ and let $(\xi_1,\Omega_1):=(\epsilon_{x_1,x_2,x_3})_t(\xi,\Omega).$ Then 
\begin{eqnarray*}
&&T\Big(\big(\xi^*(y_1),\xi(y_1)\big),\big(\xi^*(y_2),\xi(y_2)\big),\big(\xi^*(y_3),\xi(y_3)\big)\Big)\\ 
&=&T\Big(\big(\xi_1^*(y_1),\xi_1(y_1)\big),\big(\xi_1^*(y_2),\xi_1(y_2)\big),\big(\xi_1^*(y_3),\xi_1(y_3)\big)\Big).
\end{eqnarray*}
\end{enumerate}
\end{prop}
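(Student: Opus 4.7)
The three assertions are all equalities in the quotient $\PGL(3,\Rbbb)\backslash\Dmc$, so the strategy mirrors the proof of Proposition~\ref{cross ratio facts}: one partitions $S^1$ into the subintervals determined by the six marked points, records the projective transformation applied on each subinterval by each side of the claimed equality, and exhibits a single element of $\PGL(3,\Rbbb)$ conjugating one side to the other.

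For part~(1), let $g_1(t),g_2(t),g_3(t)$ be the three projective transformations defining $(\epsilon_{x_1,x_2,x_3})_t$ on $(\xi,\Omega)$ and let $h_1(s),h_2(s),h_3(s)$ be those defining $(\epsilon_{y_1,y_2,y_3})_s$. The key observation is that in the cyclic order on $S^1$ the points $y_1,y_2,y_3$ all lie in $[x_3,x_1]_{x_2}$, so $(\epsilon_{x_1,x_2,x_3})_t$ multiplies each $y_j$-flag by $g_2(t)$; consequently the eruption parameters of $(\epsilon_{y_1,y_2,y_3})_s$, recomputed on the deformed configuration, equal $g_2(t)h_j(s)g_2(t)^{-1}$. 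Symmetrically, $x_1,x_2,x_3\in[y_3,y_1]_{y_2}$, and after $(\epsilon_{y_1,y_2,y_3})_s$ the parameters of $(\epsilon_{x_1,x_2,x_3})_t$ become $h_2(s)g_i(t)h_2(s)^{-1}$. An arc-by-arc computation on the six subintervals then shows that $(\epsilon_{y_1,y_2,y_3})_s\circ(\epsilon_{x_1,x_2,x_3})_t$ and $(\epsilon_{x_1,x_2,x_3})_t\circ(\epsilon_{y_1,y_2,y_3})_s$ differ uniformly by left multiplication by the commutator $[g_2(t),h_2(s)]$. For example, on $[x_1,x_2]_{x_3}$ the former composition acts by $g_2(t)h_2(s)g_2(t)^{-1}g_3(t)$ and the latter by $h_2(s)g_3(t)$, while on the subarc between $x_3$ and $y_1$ one obtains $g_2(t)h_2(s)$ and $h_2(s)g_2(t)$ respectively; in each case the two agree after left multiplication by $[g_2(t),h_2(s)]$, and the four remaining subintervals follow the same pattern.

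For part~(2), replace one of the two eruption flows by a shearing or bulging flow $\phi_{t_1}$ based at $\xi(x_1)$ and $\xi(x_3)$. Since $y_1,y_2,y_3$ all lie on a single side of the cut $[\xi(x_1),\xi(x_3)]$, the map $\phi_{t_1}$ multiplies all three $y_j$-flags by one and the same element of $\PGL(3,\Rbbb)$, namely $s_{x_1,x_3}(\pm t_1)$ or $b_{x_1,x_3}(\pm t_1)$. The subinterval bookkeeping of part~(1) then applies verbatim, with this element playing the role of $h_2(s)$, and the two compositions again differ by the analogous commutator.

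For part~(3), the points $y_1,y_2,y_3$ all lie in the arc $[x_3,x_1]_{x_2}$, on which $(\epsilon_{x_1,x_2,x_3})_t$ acts by the single projective transformation $g_2(t)$. Hence $\bigl(\xi_1(y_j),\xi_1^*(y_j)\bigr)=g_2(t)\cdot\bigl(\xi(y_j),\xi^*(y_j)\bigr)$ for $j=1,2,3$, and the $\PGL(3,\Rbbb)$-invariance of the triple ratio (Proposition~\ref{triple ratio level set}) yields the asserted equality. The main obstacle is the arc-by-arc verification in part~(1): one must check that the conjugation factors, arising because the second flow is computed with respect to the already deformed configuration, cancel against the direct action of the first flow to produce the very same commutator on every subinterval. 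Once this is verified on one subinterval on which only one of the two flows acts nontrivially and one on which both act nontrivially, the remaining four subintervals follow by the identical pattern.
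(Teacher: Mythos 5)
Your proposal is correct and follows exactly the route the paper intends: the paper omits this proof, remarking only that it is ``a variation of the proof of Proposition~\ref{cross ratio facts}'', and your argument is precisely that variation --- since $y_1,y_2,y_3$ all lie in $[x_3,x_1]_{x_2}$ (resp.\ $x_1,x_2,x_3$ in $[y_3,y_1]_{y_2}$), each flow moves the other's defining flags by a single projective transformation, so the recomputed parameters are conjugates and the arc-by-arc bookkeeping yields a uniform commutator $[g_2(t_1),h_2(t_2)]$, while part~(3) is immediate from $\PGL(3,\Rbbb)$-invariance of the triple ratio. Your sample computations on $[x_1,x_2]_{x_3}$ and on the arc between $x_3$ and $y_1$ check out, and the remaining arcs do follow the same pattern.
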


For any $(\xi,\Omega)\in\Dmc$ and any $x_1,x_2,x_3\in S^1$ that are pairwise distinct, we have
\[T\Big(\big(\xi^*(y_1),\xi(y_1)\big),\big(\xi^*(y_2),\xi(y_2)\big),\big(\xi^*(y_3),\xi(y_3)\big)\Big)>0,\] 
so we can define
\[\tau_\xi(x_1,x_2,x_3):=\log T\Big(\big(\xi^*(x_1),\xi(x_1)\big),\big(\xi^*(x_2),\xi(x_2)\big),\big(\xi^*(x_3),\xi(x_3)\big)\Big).\]

\begin{prop}\label{triple compute}
Let $x_1,x_2,x_3\in S^1$ and let $(\xi,\Omega)\in\Dmc$. For all $t\in\Rbbb$, let $(\epsilon_{x_1,x_2,x_3})_t(\xi,\Omega)=(\xi_1,\Omega_1)$. Then
\[\tau_{\xi_1}(x_1,x_2,x_3)=\tau_\xi(x_1,x_2,x_3)+t.\]
\end{prop}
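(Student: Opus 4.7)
The plan is to reduce this to the analogous statement for triples of positive flags, already established at the end of Section~\ref{deforming a triple of flags}, which asserts that the eruption flow $\epsilon_t$ on $\Fmc_3^+$ multiplies the triple ratio by $e^t$. The whole content of the proposition is to verify that applying the elementary eruption flow on $\Dmc$ to $(\xi,\Omega)$ produces, at the three marked boundary points $x_1, x_2, x_3$, precisely the triple of flags obtained by applying $\epsilon_t$ to the initial triple $(F_1,F_2,F_3) := \big((\xi(x_1),\xi^*(x_1)),(\xi(x_2),\xi^*(x_2)),(\xi(x_3),\xi^*(x_3))\big) \in \Fmc_3^+$.

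First, I would identify the new flags $F_i(t) := \big(\xi_1(x_i),\xi_1^*(x_i)\big)$. By the case analysis in the definition of $\xi_{x_1,x_2,x_3,t}$, the point $x_i$ lies on the common boundary of two of the closed arcs used to piece together $\xi_1$, namely the arcs on which $\xi$ is postcomposed with $g_{i+1}(t)$ and with $g_{i-1}(t)$. The construction of $\Omega_{x_1,x_2,x_3,t}$ recalled in Section~\ref{deformations of properly convex domains} guarantees $g_{i+1}(t)\cdot \xi(x_i) = g_{i-1}(t)\cdot \xi(x_i)$, so $\xi_1(x_i)$ is well-defined, and moreover the tangent line to $\partial\Omega_{x_1,x_2,x_3,t}$ at this point equals $l_i = \xi^*(x_i)$. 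Consequently $F_i(t) = (g_{i+1}(t)\cdot \xi(x_i),\,l_i) = (g_{i-1}(t)\cdot \xi(x_i),\,l_i)$.

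Next, I would compare this triple with the output of the eruption flow on $\Fmc_3^+$ applied to $(F_1,F_2,F_3)$. The elements $g_1(t), g_2(t), g_3(t) \in \PGL(3,\Rbbb)$ used to define both flows are literally the same matrices in the basis dual to $\{l_1,l_2,l_3\}$. The construction in Section~\ref{deforming a triple of flags} assembles $(\Delta(t),\Delta'(t))$ from the pieces $g_i(t)\cdot T_i$ and $g_i(t)\cdot Q_i$, and the resulting corners of $\Delta(t)$ together with the lines $l_i$ carrying the edges of $\Delta'(t)$ are precisely the three flags $(g_{i+1}(t)\cdot p_i,\,l_i)$. Thus $\epsilon_t(F_1,F_2,F_3) = (F_1(t),F_2(t),F_3(t))$ as elements of $\Fmc_3^+$.

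Finally, applying the identity $T(F_1(t),F_2(t),F_3(t)) = e^{t}\, T(F_1,F_2,F_3)$ from Section~\ref{deforming a triple of flags} and taking logarithms yields
\[
\tau_{\xi_1}(x_1,x_2,x_3) = \log T(F_1(t),F_2(t),F_3(t)) = t + \log T(F_1,F_2,F_3) = \tau_\xi(x_1,x_2,x_3) + t.
\]
There is no serious obstacle; the only verification that requires care is the matching of flags in Step~1, which rests on the two facts (established earlier in the paper) that $g_{i\pm 1}(t)\cdot\xi(x_i)$ agree projectively and that each $g_{i\pm 1}(t)$ stabilizes $l_i$ setwise so that the common tangent at the corner is $l_i$.
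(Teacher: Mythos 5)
Your proof is correct and follows the route the paper intends: the paper omits this proof (leaving it to the reader as a variation of Proposition~\ref{cross ratio facts}), and your reduction to the identity $T\big(F_1(t),F_2(t),F_3(t)\big)=e^t\,T(F_1,F_2,F_3)$ from Section~\ref{deforming a triple of flags}, via the observation that the elementary eruption flow on $\Dmc$ acts on the three flags at $x_1,x_2,x_3$ exactly as the eruption flow on $\Fmc_3^+$ acts on $\big((\xi(x_1),\xi^*(x_1)),(\xi(x_2),\xi^*(x_2)),(\xi(x_3),\xi^*(x_3))\big)$, is precisely the intended argument. The one point needing care---that $\xi_1(x_i)$ is well defined and the tangent line there is still $l_i$---is exactly the fact the paper records when constructing $\Omega_{x_1,x_2,x_3,t}$, and you invoke it correctly.
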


Finally, the next proposition says that when we perform the shearing, bulging, or eruption flows, we are only changing the pair $(\xi,\Omega)$ ``near the edges" involved in defining these flows. The proof uses the same argument as the proof of (2) of Proposition \ref{cross ratio facts}, so we will omit it.

\begin{prop}\label{triple ratio and cross ratio facts}
Let $x_1<x_2<x_3\leq y_1<y_2<y_3\leq x_1$ lie along $S^1$ in this cyclic order. If $y_1=x_3$ and $y_3=x_1$, let $y_4:=x_2$. Otherwise, let $y_4\in(x_3,y_1)_{y_2}\cup(y_3,x_1)_{y_2}$ (see Notation~\ref{interval notation}). Also, let $t\in\Rbbb$ and let $(\xi,\Omega)\in\Dmc$. The following statements hold:
\begin{enumerate}
\item Let $(\xi_1,\Omega_1):=(\epsilon_{x_1,x_2,x_3})_t(\xi,\Omega).$ Then 
\begin{eqnarray*}
\sigma_{\xi,\xi^*}(y_1,y_2,y_4,y_3)&=&\sigma_{\xi_1,\xi_1^*}(y_1,y_2,y_4,y_3),\\
\sigma_{\xi,\xi^*}(y_3,y_4,y_2,y_1)&=&\sigma_{\xi_1,\xi_1^*}(y_3,y_4,y_2,y_1).\\
\end{eqnarray*}
\item Let $\phi\in\{\psi_{x_1,x_3},\beta_{x_1,x_3}\}$ and let $(\xi_1,\Omega_1):=\phi_t(\xi,\Omega).$ Then 
\[\tau_{\xi,\xi^*}(y_1,y_2,y_3)=\tau_{\xi_1,\xi_1^*}(y_1,y_2,y_3).\]
\end{enumerate}
\end{prop}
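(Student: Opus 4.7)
The plan is to mimic the argument used to prove Proposition~\ref{cross ratio facts}(2): in each case I will exhibit a single element of $\PGL(3,\Rbbb)$ which transforms every flag appearing in the relevant invariant before the flow into its image after the flow, and then invoke the projective invariance of the cross ratio and triple ratio. The analysis splits according to which sub-arc of $S^1$ the points $y_i$ lie in.

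For part~(2), the cyclic ordering $x_1<x_2<x_3\leq y_1<y_2<y_3\leq x_1$ forces $y_1,y_2,y_3$ to lie in the closed sub-arc of $S^1\setminus\{x_1,x_3\}$ not containing $x_2$. By the definition of $\psi_{x_1,x_3}$ and $\beta_{x_1,x_3}$, every point of this sub-arc is mapped by a single projective transformation (namely $s_{x_1,x_3}(-t)$ or $b_{x_1,x_3}(-t)$), and the gluing at the endpoints $x_1,x_3$ is consistent because $s_{x_1,x_3}(t)$ and $b_{x_1,x_3}(t)$ both fix the flags $(\xi(x_1),\xi^*(x_1))$ and $(\xi(x_3),\xi^*(x_3))$. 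Hence all three flags at $y_1,y_2,y_3$ are transported by a common element of $\PGL(3,\Rbbb)$, and $\tau_{\xi,\xi^*}(y_1,y_2,y_3)$ is preserved by Proposition~\ref{triple ratio level set}(2).

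For part~(1) in the generic sub-case $y_4\in(x_3,y_1)_{y_2}\cup(y_3,x_1)_{y_2}$, the four points $y_1,y_2,y_3,y_4$ all lie in the closed arc $[x_3,x_1]_{x_2}$. On this arc, $\xi_{x_1,x_2,x_3,t}$ equals $g_2(t)\circ\xi$. The consistency at the endpoints $x_1,x_3$ uses the well-definedness identities $g_1(t)\cdot p_3=g_2(t)\cdot p_3$ and $g_2(t)\cdot p_1=g_3(t)\cdot p_1$ for the points themselves, together with the observation that each $g_i(t)$, being diagonal in the basis dual to $\{l_1,l_2,l_3\}$, fixes every $l_k$; hence the tangent covectors $\xi^*(x_1)=l_1$ and $\xi^*(x_3)=l_3$ are also transformed correctly by $g_2(t)$. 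All four flags thus move by the single element $g_2(t)$, and both claimed cross-ratio equalities follow from the projective invariance of $C$.

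The main obstacle will be the degenerate sub-case $y_1=x_3$, $y_3=x_1$, $y_4=x_2$, in which $y_4$ is no longer in $[x_3,x_1]_{x_2}$ and $\xi_1(y_4)=g_1(t)\cdot p_2=g_3(t)\cdot p_2$ generally differs from $g_2(t)\cdot p_2$, so no single element of $\PGL(3,\Rbbb)$ transports the whole configuration. I expect to handle this case by a direct computation in the coordinates of the proof of Proposition~\ref{cross ratio and triple ratio}, exploiting the strong incidence relations among $\xi^*(y_1)=l_3$, $\xi^*(y_3)=l_1$, $\overline{\xi(y_1)\xi(y_3)}=\overline{p_3p_1}$, and the three vertices of the eruption to see how the different $g_i(t)$'s produce the required cancellations in the cross ratio.
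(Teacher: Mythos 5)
Your part (2) and the generic sub-case of part (1) are correct, and they are exactly the argument the paper has in mind: the proposition is stated with the remark that its proof is the same as that of Proposition~\ref{cross ratio facts}(2), namely that every flag entering the invariant is transported by one common projective transformation. Your observation that each $g_i(t)$ is diagonal in the basis $\{v_1,v_2,v_3\}$ spanning $q_1,q_2,q_3$, hence preserves each $l_k$, correctly handles the endpoint situations $y_1=x_3$ or $y_3=x_1$ so long as $y_4$ still lies in $[x_3,x_1]_{x_2}$: there all four flags move by $g_2(t)$.

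The degenerate sub-case $y_1=x_3$, $y_3=x_1$, $y_4=x_2$ is a genuine gap, and the computation you defer to does not close it: the cancellation you anticipate does not occur. In the coordinates of Proposition~\ref{cross ratio and triple ratio}, take $l_1=[1:0:0]$, $l_2=[0:1:0]$, $l_3=[0:0:1]$, $p_1=[0:b_1:c_1]^T$, $p_2=[a_2:0:c_2]^T$, $p_3=[a_3:b_3:0]^T$, with $g_1(t),g_2(t),g_3(t)$ the diagonal matrices of Section~\ref{deforming a triple of flags}. Then $\xi_1^*(y_1)=l_3$, $\xi_1(y_2)=g_2(t)\cdot\xi(y_2)$ and $\overline{\xi_1(y_1)\xi_1(y_3)}=g_2(t)\cdot\overline{p_3p_1}$, but $\xi_1(y_4)=\xi_1(x_2)=g_1(t)\cdot p_2$. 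Since the other three entries of the cross ratio are obtained from the old ones by applying $g_2(t)$, and the cross ratio depends on its third entry only through the line it spans with the base point, the first equality of (1) holds if and only if $g_2(t)^{-1}g_1(t)\cdot p_2=[a_2:0:e^{-t}c_2]^T$ lies on $\overline{p_2p_3}=[-b_3c_2:a_3c_2:a_2b_3]$; the pairing of these equals $a_2b_3c_2(e^{-t}-1)\neq 0$ for $t\neq 0$. Carrying the computation to the end gives
\[
\frac{C\big(\xi_1^*(y_1),\xi_1(y_2),\xi_1(y_4),\overline{\xi_1(y_1)\xi_1(y_3)}\big)}{C\big(\xi^*(y_1),\xi(y_2),\xi(y_4),\overline{\xi(y_1)\xi(y_3)}\big)}=\frac{\tau+1}{e^{t}\tau+1},
\qquad \tau:=T(F_1,F_2,F_3)=\frac{a_2b_3c_1}{a_3b_1c_2},
\]
independently of $y_2$ (for instance, for $\tau=1$ the cross ratio $-1$ becomes $-2/(e^t+1)$). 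So the deferred verification would refute, not establish, the asserted identity. You were right to isolate this as the case where no single element of $\PGL(3,\Rbbb)$ transports the whole configuration, but the correct conclusion is that the elementary eruption does \emph{not} preserve the edge invariants of the three edges of the erupted triangle itself; it preserves only invariants attached to edges and triangles whose defining points all lie in one of the arcs $[x_2,x_3]_{x_1}$, $[x_3,x_1]_{x_2}$, $[x_1,x_2]_{x_3}$, which is where your common-transformation argument applies. A proof of the proposition as literally stated is therefore not obtainable by your method in the degenerate sub-case; that sub-case must either be excluded or replaced by the transformation law displayed above.
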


%%%%%%%%%%%%%%%%%%%%%%%%%%%%%%%%%%%%%%%%%%%%%%%%%%%%%%%%%%%%%%%%%%%%%%%%%%%%%%%%%%%%%%%%%%%%%%%%%%%%
\section{Convex real projective structures on $S$} 
%%%%%%%%%%%%%%%%%%%%%%%%%%%%%%%%%%%%%%%%%%%%%%%%%%%%%%%%%%%%%%%%%%%%%%%%%%%%%%%%%%%%%%%%%%%%%%%%%%%%

Now we consider $S$, a closed, connected, orientable smooth surface of genus $ g\geq 2$. In Section~\ref{deform_convex} we extend the elementary shearing, bulging, and eruption flows to flows on the space of convex real projective structures on $S$. In this section we describe the necessary background in the deformation space of convex real projective structures on $S$.

%%%%%%%%%%%%%%%%%%%%%%%%%%%%%%%%%%%%%%%%%%%%%%%%%%
\subsection{Basics}\label{basics} 
%%%%%%%%%%%%%%%%%%%%%%%%%%%%%%%%%%%%%%%%%%%%%%%%%%
We recall the definition of convex real projective structures on $S$, and briefly describe some of their properties.

 A \emph{convex $\Rbbb\Pbbb^2$ surface} $\Sigma$ is the quotient of a properly convex domain $\Omega$ in $\Rbbb\Pbbb^2$ by a group of projective transformations $\Gamma$ that acts freely, properly discontinuously and cocompactly on $\Omega$, i.e. $\Sigma=\Gamma\backslash\Omega$. If $\Sigma$ and $\Sigma'$ are convex $\Rbbb\Pbbb^2$ surfaces, a \emph{(projective) isomorphism} is a diffeomorphism $f:\Sigma\to\Sigma'$,  whose induced map on the universal covers $\widetilde{f}:\widetilde{\Sigma}\to\widetilde{\Sigma}'$ is the restriction of a projective transformation on $\Rbbb\Pbbb^2$.
We consider 
\[\widetilde{\Cmc(S)}:=\left\{(f,\Sigma):\begin{array}{l}
\Sigma\text{ is a convex }\Rbbb\Pbbb^2\text{ surface }\\
f:S\to\Sigma\text{ is a diffeomorphism}\end{array}\right\},\]
and define the \emph{deformation space of convex $\Rbbb\Pbbb^2$ structures on $S$} to be 
\[\Cmc(S):=\widetilde{\Cmc(S)}/\sim,\]
where $(f,\Sigma)\sim(f',\Sigma')$ if $f'\circ f^{-1}:\Sigma\to\Sigma'$ is isotopic to a projective isomorphism from $\Sigma$ to $\Sigma'$. An element $[f,\Sigma]\in\Cmc(S)$ is a \emph{convex $\Rbbb\Pbbb^2$ structure} on $S$.
 For any $(f,\Gamma\backslash\Omega)\in\widetilde{\Cmc(S)}$, the diffeomorphism $f$ lifts to a map $\widetilde{f}:\widetilde{S}\to\Omega$, which is called the \emph{developing map}. Since $\widetilde{f}$ intertwines the deck transformations on $\widetilde{S}$ and $\Omega$, there is a unique group homomorphism $\text{hol}:\pi_1(S)\to\Gamma\subset\PGL(3,\Rbbb)$, called the \emph{holonomy representation}, so that $f$ is $\text{hol}$-equivariant.

Consider the space 
\[\widetilde{\Hmc(S)}:=\left\{(F,\text{hol}):\begin{array}{ll}\text{hol}:\pi_1(S)\to\PGL(3,\Rbbb)\text{ is a homomorphism},\\ F:\widetilde{S}\to\Rbbb\Pbbb^2\text{ is a }\text{hol}\text{-equivariant diffeomorphism} \\ \text{onto a properly convex domain }\Omega\subset\Rbbb\Pbbb^2\end{array}\right\}.\]
The map $\widetilde{\Phi}:\widetilde{\Cmc(S)}\to\widetilde{\Hmc(S)}$ given by $\widetilde{\Phi}:(f,\Gamma\backslash\Omega)\mapsto(\widetilde{f},\text{hol})$ is a bijection, where $\widetilde{f}$ and $\text{hol}$ are the developing map and the holonomy representation for $(f,\Gamma\backslash\Omega)$ respectively. The topology on $\widetilde{\Hmc(S)}$ can be described as follows: a sequence $\{(F_i,\rho_i)\}_{i=1}^\infty$ converges to $(F,\rho)$ in $\widetilde{\Hmc(S)}$ if and only if $\{F_i\}_{i=1}^\infty$ converges to $F$ pointwise. It is easy to see that this implies that $\{\rho_i\}_{i=1}^\infty$ converges to $\rho$  in $\text{Hom}(\pi_1(S),\PGL(3,\Rbbb))$. This then induces a topology $\widetilde{\Cmc(S)}$ via $\widetilde{\Phi}$.

Let $\widehat{\Hmc(S)}:=\widetilde{\Hmc(S)}/\sim$, where $(F,\rho)\sim(F',\rho')$ if $\rho=\rho'$ and $F$ is isotopic to $F'$ via $\rho$-equivariant diffeomorphisms onto properly convex subsets of $\Rbbb\Pbbb^2$. Also, let $\Hmc(S):=\widehat{\Hmc(S)}/\sim$, where $h_1\sim h_2$ if there are representatives $(F_i,\rho_i)$ of $h_i$ and $g\in\PGL(3,\Rbbb)$ so that $(g\circ F_1,g\rho_1(\cdot)g^{-1})=(F_2,\rho_2)$. One can verify that $\widetilde{\Phi}$ descends to a homeomorphism $\Phi:\Cmc(S)\to\Hmc(S)$. 
 
Let $\widetilde{\text{Hol}}:\widetilde{\Hmc(S)}\to\text{Hom}(\pi_1(S),\PGL(3,\Rbbb))$ be defined by $\widetilde{\text{Hol}}:(F,\text{hol})\mapsto\text{hol}$. Choi-Goldman \cite{ChoiGoldman} proved that this map descends to a map $\text{Hol}:\Hmc(S)\to\text{Hom}(\pi_1(S),\PGL(3,\Rbbb))/\PGL(3,\Rbbb)$, which is a homeomorphism onto a connected component of $\text{Hom}(\pi_1(S),\PGL(3,\Rbbb))/\PGL(3,\Rbbb)$. We similarly denote by $\widetilde{\text{Hol}}$ and $\text{Hol} $ the maps induces by precomposition with $\widetilde{\Phi}$ and $\Phi$: 
\begin{eqnarray*}
\widetilde{\text{Hol}}:\widetilde{\Cmc(S)}\to\text{Hom}(\pi_1(S),\PGL(3,\Rbbb)).\\
\text{Hol}:\Cmc(S)\to\text{Hom}(\pi_1(S),\PGL(3,\Rbbb))/\PGL(3,\Rbbb).
\end{eqnarray*}
Furthermore, it is known that the image of $\text{hol}$ does not contain any singular points of $\text{Hom}(\pi_1(S),\PGL(3,\Rbbb))/\PGL(3,\Rbbb)$, so we can use the map $\text{hol}$ to define a smooth structure on $\Cmc(S)$ and $\Hmc(S)$.

The following theorem lists some classical result of Kuiper \cite{Kuiper} and Benzecri \cite{Benzecri}, (also see Theorem 3.2 of \cite{Goldman_convex}). 

\begin{thm} \label{properties of convex projective surfaces} Let $[f,\Gamma\backslash\Omega]\in\Cmc(S)$, let $[\rho]=\mathrm{Hol}[f,\Gamma\backslash\Omega]$, and let $\gamma\in\pi_1(S)\setminus\{\id\}$. 
\begin{enumerate}
\item The image of $f$ is a strictly convex domain in $\Rbbb\Pbbb^2$ with $C^1$ boundary. 
\item The group element $\rho(\gamma)\in\PGL(3,\Rbbb)$ is diagonalizable over $\Rbbb$ with eigenvalues having pairwise distinct absolute values. Furthermore, the unique representative of $\rho(\gamma)$ in $\SL(3,\Rbbb)$ has only positive eigenvalues.
\item Let $\rho(\gamma)^+$, $\rho(\gamma)^0$ and $\rho(\gamma)^-$ be the attracting, neutral and repelling fixed points of $\rho(\gamma)$ in $\Rbbb\Pbbb^2$. Then $\rho(\gamma)^+$ and $\rho(\gamma)^-$ lie on $\partial\Omega$, and the projective lines tangent to $\partial\Omega$ at $\rho(\gamma)^+$ and $\rho(\gamma)^-$ both contain $\rho(\gamma)^0$.\end{enumerate}
\end{thm}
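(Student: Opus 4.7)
All three statements are classical, and my plan is to deduce them from Benz\'ecri's compactness theorem together with a Hilbert-metric analysis of the $\Gamma$-action.

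For (1), I would invoke Benz\'ecri's theorem that the space of pointed properly convex domains in $\Rbbb\Pbbb^2$ modulo $\PGL(3,\Rbbb)$ is Hausdorff and that the subset of divisible domains is closed. Suppose $\partial\Omega$ contained a nontrivial open line segment $\sigma$. Pick $p\in\Omega$ near $\sigma$ and a sequence $\gamma_n\in\Gamma$ such that $\gamma_n\cdot p$ accumulates toward a point of $\sigma$. By cocompactness we can arrange that $(\Omega,\gamma_n^{-1}\cdot p)$ has a convergent subsequence mod $\PGL(3,\Rbbb)$; the segment $\sigma$ then propagates under translation and, in the limit, forces $\overline\Omega$ to contain a full projective line, contradicting proper convexity. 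The analogous argument with ``segment in $\partial\Omega$'' replaced by ``non-$C^1$ point of $\partial\Omega$'' rules out failures of $C^1$ regularity. Alternatively, one can quote Benoist's theorem: a divisible properly convex domain is strictly convex with $C^1$ boundary if and only if the dividing group is word hyperbolic, and $\pi_1(S)$ is word hyperbolic for $g\geq 2$.

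For (2), granted (1), the Hilbert metric $d_\Omega$ is $\delta$-hyperbolic and $\Gamma$ acts on $(\Omega,d_\Omega)$ freely, properly discontinuously, cocompactly by isometries. Hence every nontrivial $\rho(\gamma)$ is a hyperbolic isometry of a Gromov-hyperbolic space, with positive translation length and two distinct fixed points on $\partial\Omega$. In projective terms, $\rho(\gamma)$ stabilizes the projective line joining these fixed points, giving two eigenlines on $\partial\Omega$; the third eigenline arises as the intersection of the $\rho(\gamma)$-invariant tangent lines to $\partial\Omega$ at these two boundary fixed points, and the three absolute values of eigenvalues are distinct because translation along the axis is nontrivial. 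For the positivity of the $\SL(3,\Rbbb)$-lift: the product of eigenvalues is $+1$, so at most two are negative; since $\rho(\gamma)$ acts on the oriented $C^1$ arc $\partial\Omega$ near its attracting fixed point as an attracting diffeomorphism, it must preserve orientation there, forcing the leading eigenvalue to be positive, and similarly at the repelling fixed point, so the remaining eigenvalue is positive as well.

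For (3), $\rho(\gamma)^\pm$ cannot lie in $\Omega$ because $\Gamma$ acts freely and properly discontinuously. Since iterating $\rho(\gamma)$ on any $p\in\Omega$ cannot accumulate in $\Omega$ yet must converge to $\rho(\gamma)^+$, we conclude $\rho(\gamma)^+\in\partial\Omega$, and similarly $\rho(\gamma)^-\in\partial\Omega$. The tangent line $l^+$ to $\partial\Omega$ at $\rho(\gamma)^+$ is $\rho(\gamma)$-invariant because $\partial\Omega$ is $C^1$ and $\rho(\gamma)$-preserved; it therefore passes through a second fixed point of $\rho(\gamma)$. If that point were $\rho(\gamma)^-$, then the segment $[\rho(\gamma)^+,\rho(\gamma)^-]$ would lie in $\partial\Omega$, violating strict convexity, so $l^+$ passes through $\rho(\gamma)^0$; the same reasoning at $\rho(\gamma)^-$ concludes the proof. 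The main obstacle is part (1): strict convexity and $C^1$ regularity are genuinely nontrivial and require either careful deployment of Benz\'ecri's compactness theorem or the Benoist dichotomy, whereas (2) and (3) are direct consequences of the resulting Hilbert-metric dynamics.
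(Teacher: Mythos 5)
The paper offers no proof of this statement: it is quoted as a classical result of Kuiper and Benz\'ecri, with a pointer to Theorem~3.2 of Goldman's paper. So there is no internal argument to compare against; what you have written is a sketch of the standard classical proof from exactly those sources (Benz\'ecri compactness, or Benoist's dichotomy, for part (1), and Hilbert-metric dynamics for parts (2) and (3)), and it is correct in outline. Two places are looser than they should be. First, in (2), distinctness of $|\lambda_1|,|\lambda_2|,|\lambda_3|$ does not follow merely from ``positive translation length''; the clean argument is that the $\rho(\gamma)$-orbit of every point of the open set $\Omega$ off the repelling invariant line converges to a single point of $\partial\Omega$, which forces $|\lambda_1|>\max(|\lambda_2|,|\lambda_3|)$, and the same applied to $\gamma^{-1}$ gives $|\lambda_3|<\min(|\lambda_1|,|\lambda_2|)$; realness and diagonalizability then follow since complex eigenvalues would come in pairs of equal modulus. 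Second, your orientation argument for positivity does not directly give ``the leading eigenvalue is positive'': what orientation-preservation (which holds because $\rho(\gamma)$ is a deck transformation of the oriented surface $\Gamma\backslash\Omega$) gives is that the derivative of $\rho(\gamma)$ along $\partial\Omega$ at $\rho(\gamma)^+$, namely $\lambda_2/\lambda_1$, is positive, and likewise $\lambda_2/\lambda_3>0$ at $\rho(\gamma)^-$; all three eigenvalues therefore share a sign, and $\lambda_1\lambda_2\lambda_3=1$ forces that sign to be positive. (Equivalently, one observes that $\Omega$ lies in one of the open triangles cut out by the three fixed points, and $\rho(\gamma)$ must preserve it.) Finally, for the $C^1$ claim in (1) the ``analogous argument'' is really the same argument applied to the dual domain $\Omega^*$, since a corner of $\partial\Omega$ is a segment in $\partial\Omega^*$; your fallback to Benoist's theorem covers this in any case.
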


%citation
It is well-known that $\pi_1(S)$ is a hyperbolic group. For each $(F,\rho)\in\widetilde{\Hmc(S)}$, $\rho$ induces a $\pi_1(S)$ action on $\Omega:=F(\widetilde{S})$ by projective transformations. In particular, $\pi_1(S)$ acts on $\Omega$ by isometries of the Hilbert metric, which induces a homeomorphism $\xi:\partial\pi_1(S)\to\partial\Omega$, where $\partial\pi_1(S)$ is the Gromov boundary of $\pi_1(S)$. Note that the map $\xi$ depends only on the equivalence class $[F,\rho]\in \widehat{\Hmc(S)}$ of $(F,\rho)$. Part (1) of Theorem \ref{properties of convex projective surfaces} then allows us to define the embedding $\widehat{\Hmc(S)}\to\Dmc$ given by $[F,\rho]\mapsto(\xi,\Omega)$, which descends to an embedding $\Hmc(S)\to\PGL(3,\Rbbb)\backslash\Dmc$. We use this embedding in Section~\ref{deform_convex} to define the shearing flows, bulging flows and eruption flows to $\Cmc(S)$.

%%%%%%%%%%%%%%%%%%%%%%%%%%%%%%%%%%%%%%%%%%%%%%%%%%
\subsection{An ideal triangulation of $S$} \label{ideal triangulation}
%%%%%%%%%%%%%%%%%%%%%%%%%%%%%%%%%%%%%%%%%%%%%%%%%%
We describe now a particular ideal triangulation on $S$ that will be used later. The triangulation depends on some topological choices on $S$. We choose an orientation on $S$ and a pants decomposition of $S$. Let $P_1,\dots,P_{2g-2}$ be the pairs of pants given by the choice of pants decomposition. For each $i=1,\dots,2g-2$, the inclusion of $P_i\subset S$ induces an embedding $\pi_1(P_i)\subset\pi_1(S)$ (after suitable choices of base points). 

For each $P_i$, let $\gamma_{1,i}, \gamma_{2,i}, \gamma_{3,i}\in\pi_1(P_i)\subset\pi_1(S)$ be three group elements corresponding to oriented peripheral curves in $P_i$ so that $\gamma_{3,i}\cdot\gamma_{2,i}\cdot\gamma_{1,i}=\id$, and $P_i$ lies on the left of each of the oriented boundary components of $P_i$ corresponding to the conjugacy classes $[\gamma_{1,i}]$, $[\gamma_{2,i}]$ and $[\gamma_{3,i}]$.

Let $\Gmc(\widetilde{S}):=\{(x,y)\in\partial\pi_1(S)^2:x\neq y\}/\big((x,y)\sim(y,x)\big)$, and denote every equivalence class in $\Gmc(\widetilde{S})$ by $\{x,y\}$. If we choose a negatively curved metric on $S$, then $\Gmc(\widetilde{S})$ is naturally identified with the space of geodesics on $\widetilde{S}$ in this negatively curved metric. As such, we will refer to the elements in $\Gmc(\widetilde{S})$ as \emph{geodesics of $\widetilde{S}$}. Also, we say that a pair of geodesics $\{x,y\},\{z,w\}\in\Gmc(\widetilde{S})$ \emph{intersects transversely} if $x,z,y,w\in\partial\pi_1(S)$ in this cyclic order.

Let $\gamma_{1,i}^-, \gamma_{2,i}^-, \gamma_{3,i}^-\in\partial\pi_1(S)$ and $\gamma_{1,i}^+, \gamma_{2,i}^+, \gamma_{3,i}^+\in\partial\pi_1(S)$ be the repelling and attracting fixed points of $\gamma_{1,i}$, $\gamma_{2,i}$, $\gamma_{3,i}$ respectively. Then let
\[\widetilde{\Qmc}_i:=\bigcup_{\gamma\in\pi_1(S)}\big\{\gamma\cdot\{\gamma_{1,i}^-,\gamma_{2,i}^-\},\gamma\cdot\{\gamma_{2,i}^-,\gamma_{3,i}^-\},\gamma\cdot\{\gamma_{3,i}^-,\gamma_{1,i}^-\}\big\},\]
\[\widetilde{\Pmc}:=\bigcup_{i=1}^{2g-2}\bigcup_{\gamma\in\pi_1(S)}\big\{\gamma\cdot\{\gamma_{1,i}^-,\gamma_{1,i}^+\},\gamma\cdot\{\gamma_{2,i}^-,\gamma_{2,i}^+\},\gamma\cdot\{\gamma_{3,i}^-,\gamma_{3,i}^+\}\big\}.\]
Clearly, $\widetilde{\Pmc}$ and $\widetilde{\Qmc}_i$ for all $i=1,\dots,2g-2$ are invariant under the $\pi_1(S)$ action on $\partial\pi_1(S)^2$. Thus, we can define
\[\Qmc_i:=\pi_1(S)\backslash\widetilde{\Qmc}_i,\,\,\,\,\widetilde{\Qmc}:=\bigcup_{i=1}^{2g-2}\widetilde{\Qmc}_i,\,\,\,\,\widetilde{\Tmc}:=\widetilde{\Qmc}\cup\widetilde{\Pmc},\]
\[\Pmc:=\pi_1(S)\backslash\widetilde{\Pmc},\,\,\,\,\Qmc:=\pi_1(S)\backslash\widetilde{\Qmc},\,\,\,\,\Tmc:=\pi_1(S)\backslash\widetilde{\Tmc}\]
and observe that $\Qmc=\bigcup_{i=1}^{2g-2}\Qmc_i$ and $\Tmc=\Qmc\cup\Pmc$ (see Figure \ref{triangulation}). 

\begin{figure}[ht]
\centering
\includegraphics[scale=0.65]{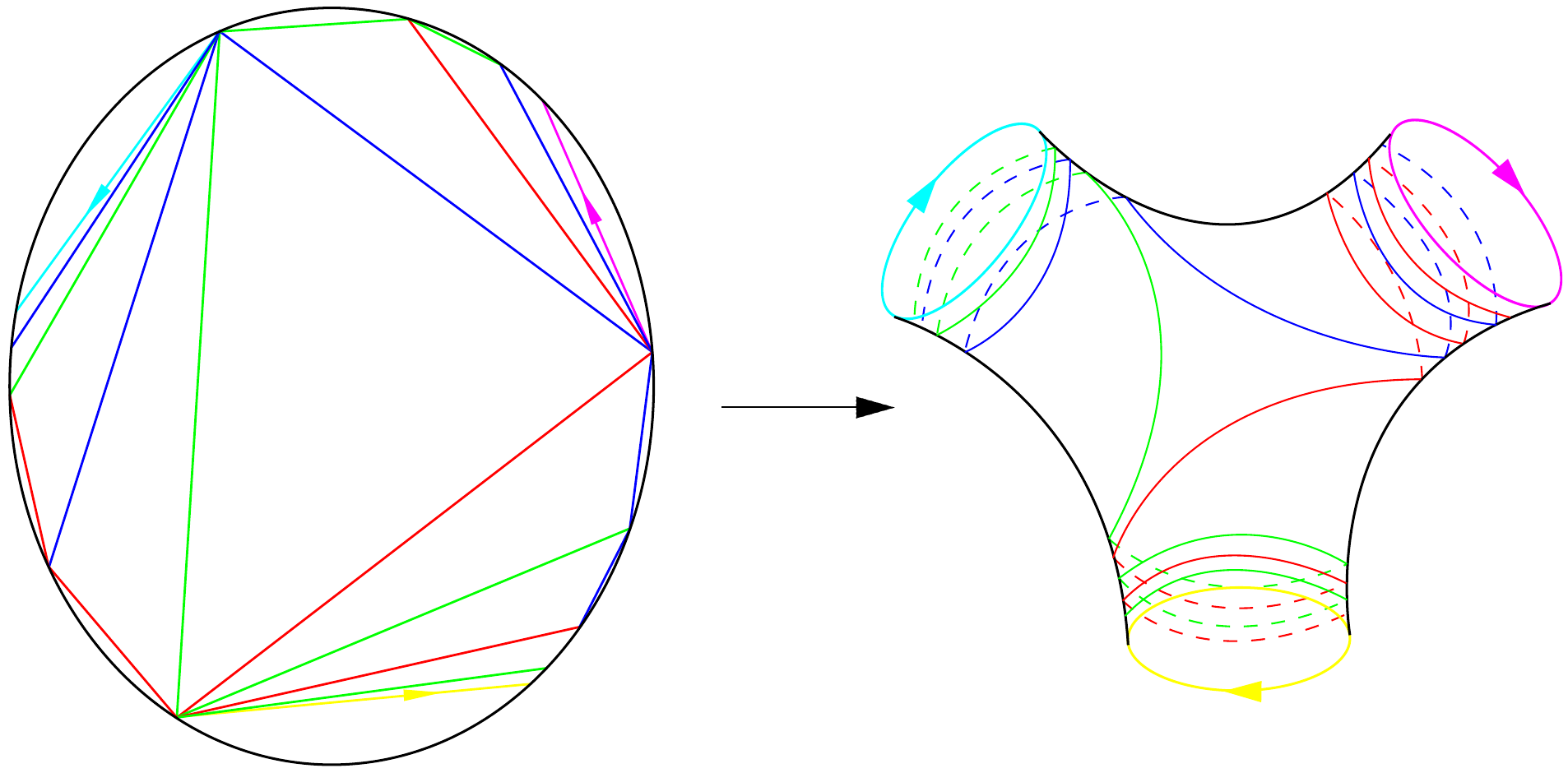}
\small
\put (-320, 174){$\gamma_{1,i}^-$}
\put (-369, 108){$\gamma_{1,i}^+$}
\put (-330, 7){$\gamma_{2,i}^-$}
\put (-238, 15){$\gamma_{2,i}^+$}
\put (-208, 97){$\gamma_{3,i}^-$}
\put (-235, 155){$\gamma_{3,i}^+$}
\caption{The colored curves are edges in $\widetilde{\Tmc}$ and $\Tmc$. The red, blue and green curves are in $\widetilde{\Qmc}$ and $\Qmc$, while the violet, turquoise and yellow curves are in $\widetilde{\Pmc}$ and $\Pmc$}\label{triangulation}
\end{figure}

The geodesics in $\widetilde{\Qmc}$ can be characterized as the geodesics $\{x,y\}\in\widetilde{\Tmc}$ for which there exists $z,z'\in\partial\pi_1(S)$ so that $\{x,z\},\{y,z\},\{x,z'\},\{y,z'\}\in\widetilde{\Tmc}$. Similarly, the geodesics in $\widetilde{\Pmc}$ are exactly the geodesics $\{x,y\}\in\widetilde{\Tmc}$ for which there exists sequences $\{z_i\}_{i=1}^\infty,\{z'_i\}_{i=1}^\infty\subset\partial\pi_1(S)$ so that $\lim_{i\to\infty}z_i'=x$, $\lim_{i\to\infty}z_i=y$, and $\{x,z_i\},\{y,z'_i\}\in\widetilde{\Tmc}$ for all $i=1,\dots,\infty$. It is also easy to see that $\Pmc$ defined above is naturally in bijection with the pants decomposition we chose on $S$, and that $|\Qmc_i|=3$ for all $i=1,\dots,2g-2$. In particular, $|\Pmc|=3g-3$ and $|\Qmc|=6g-6$. 

If we choose a negatively curved metric on $S$, then $\Tmc$ can be naturally realized as an ideal triangulation of $S$, where each pair of pants given by $\Pmc$ is cut into two ideal triangles. As such, we will call $\Tmc$ (resp. $\widetilde{\Tmc}$) an \emph{ideal triangulation} of $S$ (resp. $\widetilde{S}$), and the elements in $\Tmc$ and $\widetilde{\Tmc}$ are called \emph{edges}. The edges of $\widetilde{\Tmc}$ that lie in $\widetilde{\Pmc}$ (resp. $\widetilde{\Qmc}$) are called \emph{closed edges} (resp. \emph{non-closed edges}). Also, a \emph{triangle} of $\widetilde{\Tmc}$ is a triple of edges of the form $T=\big\{\{x,y\},\{y,z\},\{z,x\}\big\}\subset\widetilde{\Qmc}$, and each of $\{x,y\}$, $\{y,z\}$, $\{z,x\}$ is an \emph{edge} of $T$. 

Similarly, the closed edges, non-closed edges, and triangles of $\Tmc$ are images of closed edges, non-closed edges, and triangles of $\widetilde{\Tmc}$ under the quotient map $\widetilde{\Tmc}\to\Tmc$. Denote the set of triangles of $\widetilde{\Tmc}$ and $\Tmc$ by $\Theta_{\widetilde{\Tmc}}$ and $\Theta_\Tmc$ respectively.  Note that for all $i=1,\dots,2g-2$, there are exactly two triangles in $\Theta_\Tmc$ whose edges are the three edges in $\Qmc_i$. As such, $|\Theta_\Tmc|=4g-4$.

%%%%%%%%%%%%%%%%%%%%%%%%%%%%%%%%%%%%%%%%%%%%%%%%%%
\subsection{Bonahon-Dreyer parameterization of $\Cmc(S)$}\label{BonahonDreyer}
%%%%%%%%%%%%%%%%%%%%%%%%%%%%%%%%%%%%%%%%%%%%%%%%%%
The first parametrization of the deformation space of convex real projective structures $\mathcal{C}(S)$ was given by Goldman \cite{Goldman_convex}. Bonahon-Dreyer \cite{BonahonDreyer1} gave a parameterization of the $\PSL(n,\Rbbb)$-Hitchin component associated to $S$, which coincides with $\mathcal{C}(S)$ if $n=3$ for any given ideal triangulation of $S$, building upon coordinate systems introduced by Fock-Goncharov \cite{FockGoncharov} for decorated local systems on surfaces with boundary. 
The second author \cite{Zhang_internal} later gave a reparametrization of the Bonahon-Dreyer coordinates, which gives Fenchel-Nielsen type coordinates on the $\PSL(n,\Rbbb)$-Hitchin component. We describe the Bonahon-Dreyer parameterization in the special case of $\Cmc(S)$ for our particular choice of ideal triangulation $\Tmc$ of $S$. For the explicit change of coordinates between Goldman's coordinates and the Bonahon-Dreyer coordinates see \cite{BonahonKim}. 

For each closed edge $e\in\Pmc$, choose a representative $\{x,y\}\in\widetilde{\Pmc}$ of $e$, and choose a pair of vertices $z=z_{x,y},z'=z'_{x,y}\in\partial\pi_1(S)$ so that $\{z,x\},\{z',y\}\in\widetilde{\Qmc}$. For any non-closed edge $e\in\Qmc$, choose a representative $\{x,y\}\in\widetilde{\Qmc}$ for $e$, and let $z=z_{x,y},z'=z'_{x,y}\in\partial\pi_1(S)$ be the points so that $\{x,z\},\{y,z\},\{x,z'\},\{y,z'\}\in\widetilde{\Tmc}$. In either case, $x<z<y<z'<x$ lie along $\partial\pi_1(S)$ in this cyclic order. With this, we can define, for any $e\in\Tmc$, the functions $\sigma_{e,x},\sigma_{e,y}:\widetilde{\Cmc(S)}=\widetilde{\Hmc(S)}\to\Rbbb$ by 
\[\sigma_{e,x}(\xi,\Omega):=\sigma_{\xi,\xi^*}(x,z,z',y)\,\,\,\text{ and }\,\,\,\sigma_{e,y}(\xi,\Omega):=\sigma_{\xi,\xi^*}(y,z',z,x).\]

Since $\sigma_{e,x}$ and $\sigma_{e,y}$ are defined using projective invariants, they do not depend on the choice of representative for $e$, and so are indeed well-defined. For the same reason, they descend to functions, also denoted $\sigma_{e,x}$ and $\sigma_{e,y}$, on $\Cmc(S)=\Hmc(S)$. These descended functions are called the \emph{shear parameters} on $\Cmc(S)$. Since $|\Tmc|=9g-9$, we have $2(9g-9)$ shear parameters on $\Cmc(S)$.

For each triangle $T=\big\{\{x,y\},\{y,z\},\{z,x\}\big\}$ of $\widetilde{\Tmc}$ so that $x<y<z<x$ in this cyclic order along $\partial\pi_1(S)$, the triple ratio gives rise to a function $\tau_T:\widetilde{\Cmc(S)}=\widetilde{\Hmc(S)}\to\Rbbb$ by $\tau_T(\xi,\Omega):=\tau_\xi(x,y,z)$. As before, this function is defined using a projective invariant, so it descends to a function, also denoted $\tau_T$, on $\Cmc(S)=\Hmc(S)$. There are $4g-4$ such functions, and they are called the \emph{triangle parameters} on $\Cmc(S)$.

The shear and triangle parameters satisfy certain linear relations which we now describe. 
%Bonahon-Dreyer found some relations relating the shear and triangle parameters, which we will now describe. 
For each closed edge $e\in\Pmc$, let $\{x,y\}$ be a representative of $e$, and let $z_1,z_2,z'_1,z'_2\in\partial\pi_1(S)$ so that $\{z_1,x\}$, $\{z_2,x\}$, $\{z_1,z_2\}$, $\{z_1',y\}$, $\{z_2',y\}$, $\{z_1',z_2'\}\in\widetilde{\Tmc}$. For $i=1,2$, let $e_i\in\Tmc$ be the equivalence class containing $\{z_i,x\}$ and let $e_3$ be the equivalence class containing $\{z_1,z_2\}$. Similarly, for $i=1,2$, let $e_i'\in\Tmc$ be the equivalence class containing $\{z_i',y\}$ and let $e_3'$ be the equivalence class containing $\{z_1',z_2'\}$. Then, let $T_1$, $T_2$ be the two triangles whose edges are $e_1,e_2,e_3$ and let $T_1'$, $T_2'$ be the triangles whose edges are $e_1',e_2',e_3'$. 

One can then explicitly compute that for each closed edge $e\in\Pmc$, the following two identities and inequalities hold on $\Cmc(S)$:
\[\sigma_{e_1,x}+\sigma_{e_2,x}=\sigma_{e_1',x}+\sigma_{e_2',x}+\tau_{T_1'}+\tau_{T_2'}>0\]
and
\[\sigma_{e_1,y}+\sigma_{e_2,y}+\tau_{T_1}+\tau_{T_2}=\sigma_{e_1',y}+\sigma_{e_2',y}>0.\]
The two equalities for each $e\in\Pmc$ are called the \emph{closed leaf equalities}, and the two inequalities for each $e\in\Pmc$ are called the \emph{closed leaf inequalities}. The two quantities $\sigma_{e,x}$ and $\sigma_{e,y}$ also have the following geometric interpretation. For any $[f,\Sigma]\in\Cmc(S)$, let $[\rho]=\textrm{Hol}[f,\Sigma]$. Also, let $\gamma$ be the primitive element in $\pi_1(S)$ with $x$ and $y$ as its repelling and attracting fixed points respectively, and let $|\lambda_1(\rho)|>|\lambda_2(\rho)|>|\lambda_3(\rho)|$ denote the absolute values of the eigenvalues of $\rho(\gamma)$. Then Bonahon-Dreyer computed that the two quantities above are $\log|\frac{\lambda_1}{\lambda_2}|$ and $\log|\frac{\lambda_2}{\lambda_3}|$ respectively.

The following theorem of Bonahon-Dreyer states that the closed leaf equalities and inequalities are the only relations between the shear and triangle parameters.

\begin{thm}\cite[Theorem17]{BonahonDreyer1}\label{Bonahon-Dreyer}
The shear and triangle parameters give a real analytic diffeomorphism from $\Cmc(S)$ to a convex polytope $P$ in $\Rbbb^{22g-22}$ of dimension $16g-16$ that is cut out by the closed leaf equalities and inequalities described above.
\end{thm}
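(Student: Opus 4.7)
The plan is to realize the shear and triangle parameters as real analytic functions on $\Cmc(S)$ via the boundary map, establish the closed leaf equalities and inequalities as necessary constraints, and then invert the map by a gluing construction that reconstructs the convex projective structure from the combinatorial data on $\Tmc$. First, I would observe that each $[f,\Sigma]\in\Cmc(S)$ determines, via Theorem~\ref{properties of convex projective surfaces}(1) and the discussion in Section~\ref{basics}, a well-defined element $(\xi,\Omega)\in\PGL(3,\Rbbb)\backslash\Dmc$, and that the functions $\sigma_{e,x},\sigma_{e,y},\tau_T$ are defined purely in terms of the projective invariants $\sigma_{\xi,\xi^*}$ and $\tau_\xi$ of this boundary datum. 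Since the developing map and holonomy depend real analytically on the point in $\Cmc(S)$, and since the cross ratio and triple ratio are rational functions of the flag coordinates, the map to $\Rbbb^{22g-22}$ is real analytic. The count checks out: $|\Tmc|=9g-9$ gives $18g-18$ shear parameters, and $|\Theta_\Tmc|=4g-4$ gives $4g-4$ triangle parameters, so $22g-22$ in total.

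Next, I would derive the two linear relations attached to each closed edge $e\in\Pmc$. Let $\gamma\in\pi_1(S)$ be primitive with repelling and attracting fixed points $x,y$ on $\partial\pi_1(S)$, corresponding to $e$. By Theorem~\ref{properties of convex projective surfaces}(2), $\rho(\gamma)\in\PSL(3,\Rbbb)$ has three distinct positive eigenvalues $\lambda_1>\lambda_2>\lambda_3$, and the key computation, due to Bonahon--Dreyer, expresses $\log(\lambda_1/\lambda_2)$ and $\log(\lambda_2/\lambda_3)$ in terms of shear and triangle parameters of the triangles on either side of $e$. Performing this computation on the left of $e$ and then on the right (using the two adjacent triangles in the respective sides of the pants decomposition) gives two expressions for each ratio, and equating them yields the two closed leaf equalities. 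Positivity of $\log(\lambda_i/\lambda_{i+1})$ then yields the two closed leaf inequalities. This shows that the image lies inside the polytope $P$, and the dimension count $22g-22-(6g-6)=16g-16=\dim\Cmc(S)$ matches.

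For injectivity, I would argue that the shear and triangle parameters determine, up to the $\PGL(3,\Rbbb)$-action, the positive flag $\pi_1(S)$-equivariant map $\xi:\partial\pi_1(S)\to\Fmc$: on each triangle of $\widetilde{\Tmc}$ the triple ratio fixes the triple of flags up to $\PGL(3,\Rbbb)$, and the shear parameters determine how adjacent triples are glued (via Proposition~\ref{finite parameterization} applied to finite subpolygons, then passing to the limit using Proposition~\ref{finite convergence} and density of vertices of $\widetilde{\Tmc}$ in $\partial\pi_1(S)$). Since the holonomy is recovered from $\xi$ by equivariance, and the convex domain $\Omega$ is the interior of the convex hull of $\xi(\partial\pi_1(S))$, this determines $[f,\Sigma]$. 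For surjectivity, given a point in $P$, I would build a $\pi_1(S)$-equivariant family of positive flag triples on $\Theta_{\widetilde{\Tmc}}$ with the prescribed triangle parameters, glue them using the prescribed shearing parameters as in Section~4, and check that the closed leaf equalities are exactly the compatibility conditions guaranteeing $\pi_1(S)$-equivariance around each pants curve.

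The main obstacle will be showing that this abstractly assembled configuration of flags bounds a genuine properly convex domain on which $\pi_1(S)$ acts cocompactly, rather than merely a tree of projective triangles. Concretely, one must prove that the flag curve extends continuously to all of $\partial\pi_1(S)$ and bounds a strictly convex domain with $C^1$ boundary, and that the holonomy representation is faithful and discrete with image in the interior of $\Cmc(S)\cong\mathrm{Hit}_3(S)$. The closed leaf inequalities provide the loxodromic quality of the pants curve holonomies (Theorem~\ref{properties of convex projective surfaces}(2)), but the global convexity and discreteness require a separate argument, which Bonahon--Dreyer handle by identifying their coordinates with a restriction of Fock--Goncharov's positive coordinates and invoking openness of the Hitchin component together with a connectedness argument inside $P$. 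Real analyticity of the inverse then follows from real analyticity of the reconstruction, since every step is governed by rational formulas in the coordinates.
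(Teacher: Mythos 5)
This statement is quoted from Bonahon--Dreyer \cite{BonahonDreyer1} and the paper offers no proof of it, so there is no in-paper argument to compare yours against; I can only assess your outline on its own terms. As a skeleton it follows the standard strategy and is sound: the parameter count ($18g-18$ shear plus $4g-4$ triangle invariants, minus the $6g-6$ closed leaf equalities) is right; real analyticity of the forward map is correctly reduced to the fact that the flags entering the invariants are fixed flags of specific holonomy elements with simple eigenvalues, hence depend real analytically on the representation; and the derivation of the closed leaf relations by computing $\log(\lambda_1/\lambda_2)$ and $\log(\lambda_2/\lambda_3)$ from the two sides of a closed edge is exactly how the paper motivates those relations in Section 5.3. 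The injectivity argument via Proposition~\ref{finite parameterization} on finite subpolygons plus density of the vertices of $\widetilde{\Tmc}$ in $\partial\pi_1(S)$ is the right mechanism.

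The genuine gap is the one you yourself flag: surjectivity. Reconstructing a $\pi_1(S)$-equivariant positive configuration of flags over $\widetilde{\Tmc}$ from a point of $P$ is routine, but showing that this configuration comes from a discrete faithful holonomy acting cocompactly on a strictly convex domain --- rather than from a non-discrete representation or a ``tree of triangles'' that fails to close up into a convex set --- is where essentially all of the content of the theorem lives, and ``invoking openness of the Hitchin component together with a connectedness argument'' is a placeholder, not a proof. One has to show either that the image of the coordinate map is both open and closed in the (connected) polytope $P$, or explicitly verify positivity/Anosov-type properties of the reconstructed representation; in particular the closed leaf inequalities alone do not obviously guarantee that the holonomies of non-pants-curve elements behave correctly, and the continuous extension of the flag map to all of $\partial\pi_1(S)$ (not just to the vertices of $\widetilde{\Tmc}$) requires a uniform control that your sketch does not supply. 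So: correct architecture, honest about where the hard analysis sits, but that step would need to be carried out (or explicitly delegated to \cite{BonahonDreyer1} and \cite{FockGoncharov}) for this to count as a proof.
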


%%%%%%%%%%%%%%%%%%%%%%%%%%%%%%%%%%%%%%%%%%%%%%%%%%
\subsection{Families of tranverse curves} 
%%%%%%%%%%%%%%%%%%%%%%%%%%%%%%%%%%%%%%%%%%%%%%%%%%
A convex real projective structure on $S$ is identified by its holonomy representation $\text{hol}: \pi_1(S) \to \PGL(3,\Rbbb)$ and its developing map $\widetilde{f}:\widetilde{S}\to\Omega$, which is $\text{hol}$-equivariant, see Section~\ref{basics}. This gives an identification $\xi:\partial\pi_1(S) = S^1\rightarrow \partial \Omega$ and defines a point $(\xi,\Omega)\in\Dmc$. The ideal triangulation $\Tmc$ of $S$ induces an ideal triangulation $\widetilde{\Tmc}$ of $\Omega$. 
In this section we consider, for any pair of distinct vertices $x_0,y_0$ of $\widetilde{\Tmc}$, the set of curves $\Emc_{x_0,y_0}$ which intersect the projective segment with endpoint $x_0, y_0$ in $\Omega$ transversely. 
The goal is to get a decomposition of $\Emc_{x_0,y_0}$ into a finite family of curves, and a special infinite family of curves, which allows us to give a controlled analysis of how shearing and triangle parameters change under the eruption and internal bulging flows we introduce in Section~\ref{deform_convex}. 

We let $\partial\pi_1(S)$ be the circle $S^1$ in the definition of $\Dmc$. Then the ideal triangulation $\Tmc$ of $S$ induces an ideal triangulation on each $(\xi,\Omega)\in\Dmc$. For any pair of distinct vertices $x_0,y_0$ of $\widetilde{\Tmc}$, let 
\[\Emc'_{x_0,y_0}:=\big\{\{x,y\}\in\widetilde{\Tmc}:x<x_0<y<y_0<x\big\}.\]
(Note that if $\{x_0,y_0\}\in\widetilde{\Tmc}$, then $\Emc'_{x_0,y_0}$ is empty.) If we choose $(\xi,\Omega)\in\Dmc$ and a complete, negatively curved metric on $\Omega$, then $\Emc'_{x_0,y_0}$ is the set of edges in $\widetilde{\Tmc}$ that intersect the geodesic in $\Omega$ with endpoints $x_0$, $y_0$ transversely. By orienting both components of $\partial\pi_1(S)\setminus\{x_0,y_0\}$ from $x_0$ to $y_0$, this induces an ordering on $\Emc'_{x_0,y_0}$ by $\{x,y\}<\{x',y'\}$ if $x$ and $x'$ (hence $y$ and $y'$) lie in the same connected component of $\partial\pi_1(S)\setminus\{x_0,y_0\}$, $x$ weakly precedes $x'$, and $y$ weakly precedes $y'$. 

Observe that $\Emc'_{x_0,y_0}$ does not have a minimum (in the ordering described above) if and only if there is some vertex $z$ of $\widetilde{\Tmc}$ so that $\{x_0,z\}\in\widetilde{\Pmc}$, and there is a sequence $\{x_i\}_{i=1}^\infty$ of vertices of $\widetilde{\Tmc}$ that converges to $x_0$, and $\{x_i,z\}\in\Emc'_{x_0,y_0}$ for all $i$. Similarly, $\Emc'_{x_0,y_0}$ does not have a maximum if and only if there is some vertex $z'$ of $\widetilde{\Tmc}$ so that $\{y_0,z'\}\in\widetilde{\Pmc}$, and there is a sequence $\{y_i\}_{i=1}^\infty$ of vertices of $\widetilde{\Tmc}$ that converges to $y_0$, and $\{y_i,z\}\in\Emc'_{x_0,y_0}$ for all $i$. Then define
\[\Emc_{x_0,y_0}:=\left\{\begin{array}{ll}\Emc'_{x_0,y_0}&\text{if }\Emc'_{x_0,y_0}\text{ has a max and a min}\\
\Emc'_{x_0,y_0}\cup\big\{\{x_0,z\}\big\}&\text{if }\Emc'_{x_0,y_0}\text{ has a max but no min}\\
\Emc'_{x_0,y_0}\cup\big\{\{y_0,z'\}\big\}&\text{if }\Emc'_{x_0,y_0}\text{ has a min but no max}\\
\Emc'_{x_0,y_0}\cup\big\{\{x_0,z\},\{y_0,z'\}\big\}&\text{if }\Emc'_{x_0,y_0}\text{ has neither a max nor a min}
\end{array}\right.\]
and observe that $\Emc_{x_0,y_0}$ has an obvious ordering (see Figure \ref{Exy}).

\begin{figure}[ht]
\centering
\includegraphics[scale=0.8]{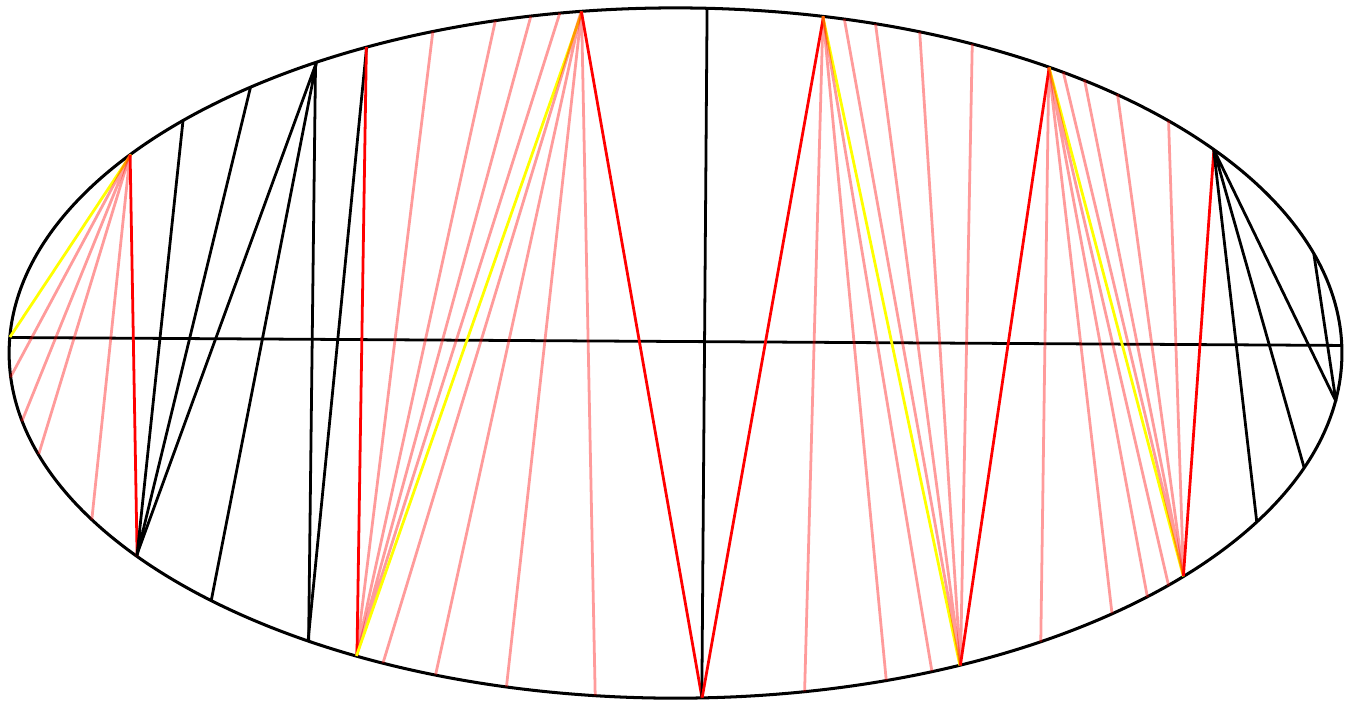}
\small
\put (-320, 84){$x_0$}
\put (0, 83){$y_0$}
\caption{The geodesics drawn above that intersect $\{x_0,y_0\}$ transversely represent the edges in $\Emc_{x_0,y_0}$. The yellow geodesics are edges in $\widetilde{\Pmc}$, the colored geodesics are edges in $\Emc_j$ and the black geodesics are edges in $\Emc_{j,j+1}$ for some $j$. Also, the non-faded geodesics are edges in $\overline{\Emc}_{x_0,y_0}$}\label{Exy}
\end{figure}

It is easy to see that there are only finitely many (possibly none) edges in $\widetilde{\Pmc}$ that lie in $\Emc_{x_0,y_0}$. Let $l_1,\dots,l_k$ denote these edges, enumerated according to the ordering on $\Emc_{x_0,y_0}$. Observe that if $e\in\Emc_{x_0,y_0}$ shares a common vertex $x$ with some $l_j$ and satisfies $e<l_j$, then every edge $e'$ satisfying $e<e'<l_j$ also has $x$ as a vertex. Similarly, if $e\in\Emc_{x_0,y_0}$ shares a common vertex $x$ with some $l_j$ and $l_j<e$, then every edge $e'$ satisfying $l_j<e'<e$ also has $x$ as a vertex. Thus, if we define
\[\Emc_j:=\{e\in\Emc_{x_0,y_0}:e\text{ shares a vertex with }l_j\},\]
\[\Fmc_j^-:=\{e\in\Emc_{x_0,y_0}:e<e'\text{ for all }e'\in\Emc_j\},\]
\[\Fmc_j^+:=\{e\in\Emc_{x_0,y_0}:e>e'\text{ for all }e'\in\Emc_j\},\]
then $\Emc_{x_0,y_0}=\Fmc_j^-\cup\Emc_j\cup\Fmc_j^+$ is a disjoint union. 
Note that $\Emc_j$ is infinite for all $j$. 

We further define 
\[\Emc_{j,j+1}:=\left\{\begin{array}{ll}
\Fmc_1^-&\text{if }j=0\\
\Fmc_j^+\cap\Fmc_{j+1}^-&\text{if }0<j<k\\
\Fmc_n^+&\text{if }j=k\\
\end{array}\right.\]
and note that $\displaystyle\Emc_{x_0,y_0}=\bigcup_{j=1}^k\Emc_j\cup\bigcup_{j=0}^k\Emc_{j,j+1}$ is a disjoint union. Note also that $\Emc_{j,j+1}$ is finite for all $j$. 

 Observe that each $\Emc_j$ has a minimum, a maximum, and a unique edge in $\widetilde{\Pmc}$. The edge in $\Pmc$ might possibly be the minimum if $j=1$, and might possibly be the maximum if $j=k$. On the other hand, if $j=2,\dots,k-1$, then the edge in $\Pmc$ is neither the minimum nor the maximum. In any case, replace each $\Emc_j\subset\Emc_{x_0,y_0}$ with these three edges, which we denote by $a_j\leq b_j\leq c_j$, to get a finite ordered set $\overline{\Emc}_{x_0,y_0}$. 

It is clear that if $a_j\neq b_j$, then $a_j$ shares a vertex with the maximum of $\Emc_{j-1,j}$ (or with $c_{j-1}$ if $\Emc_{j-1,j}$ is empty). Similarly, if $b_j\neq c_j$, then $c_j$ shares a vertex with the minimum of $\Emc_{j,j+1}$ (or with $a_{j+1}$ if $\Emc_{j,j+1}$ is empty). In particular, every pair of adjacent edges in $\overline{\Emc}_{x_0,y_0}$ determines a unique triple of points in $\partial\pi_1(S)$, and hence a triangle (which might not be in $\Theta_\Tmc$). Let $\Theta_{x_0,y_0}$ denote this finite collection of triangles.

%%%%%%%%%%%%%%%%%%%%%%%%%%%%%%%%%%%%%%%%%%%%%%%%%%%%%%%%%%%%%%%%%%%%%%%%%%%%%%%%%%%%%%%%%%%%%%%%%%%%
\section{Deformations of convex $\Rbbb\Pbbb^2$ structures on $S$}\label{deform_convex}
%%%%%%%%%%%%%%%%%%%%%%%%%%%%%%%%%%%%%%%%%%%%%%%%%%%%%%%%%%%%%%%%%%%%%%%%%%%%%%%%%%%%%%%%%%%%%%%%%%%%
We cannot simply apply the elementary shearing, bulging and eruption flows defined in Section \ref{deformations of properly convex domains} to obtain a deformation of a convex real projective structure on $S$, because these elementary flows are not equivariant with respect to the $\pi_1(S)$-action on $\Omega$. To get an equivariant flow we have to perform these flows equivariantly along an infinite family of lines and/or triangles. However then we are faced with the question of convergence, and in general we will not obtain a well-defined flow on $\Hmc(S)=\Cmc(S)$. In order to ensure well-definedness, we restrict to special  infinite combinations of elementary shearing, bulging and eruption flows. These give rise to the classical shearing and bulging flows along simple closed curves \cite{Goldman_twist, Goldman_bulging}, and to new eruption flows and internal bulging flows,  which are related to the internal triangle and shear parameters. We describe this flow without giving fully detailed proofs of all statements. In a forthcoming paper, joint with Zhe Sun, we define more general flows on the Hitchin component for $\mathrm{PSL}(n,\Rbbb)$, of which the shearing, bulging, eruption and internal bulging flows on $\Hmc(S)$ are a special case. There, complete proofs will be provided.

%picture
Let $P_i\subset S$ be any pair of pants given by the pants decomposition $\Pmc$. Let $\pi:\widetilde{S}\to S$ denote the covering map, and note that the set
\[\{x\in \widetilde{S}:\pi(x)\in P_i\}\]
has countably many connected components. Choose an enumeration of these connected components, $\widetilde{P}_{i,1},\widetilde{P}_{i,2},\dots$. Each of these $\widetilde{P}_{i,j}$ can be completely described by a subset of $\partial\pi_1(S)$, so it makes sense to say when a geodesic in $\Gmc(\widetilde{S})$ lies in $\widetilde{P}_{i,j}$.

\begin{figure}[ht]
\centering
\includegraphics[scale=0.8]{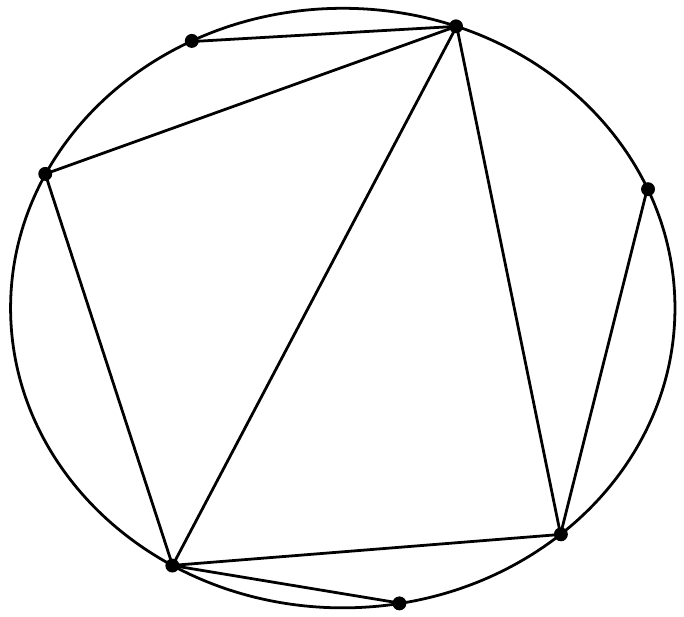}
\small
\put (-133, 7){$\gamma_{1,i}^-$}
\put (-70, -5){$\gamma_{1,i}^+$}
\put (-30, 13){$\gamma_{2,i}^-$}
\put (-7, 100){$\gamma_{2,i}^+$}
\put (-59, 142){$\gamma_{3,i}^-$}
\put (-125, 138){$\gamma_{3,i}^+$}
\put (-185, 104){$\gamma_{1,i}^{-1}\cdot\gamma_{2,i}^-$.}
\put (-70, 55){$\Delta_i$}
\put (-115, 85){$\Delta_i'$}
\caption{$\Delta_i$ and $\Delta_i'$ in $\widetilde{P}_{i,1}$}\label{standard}
\end{figure}

As before, choose group elements $\gamma_{1,i}, \gamma_{2,i}, \gamma_{3,i}\in\pi_1(S)$ so that 
\begin{itemize}
\item $\gamma_{3,i}\cdot\gamma_{2,i}\cdot\gamma_{1,i}=\id$, 
\item the axis of $\gamma_{j,i}$ for all $j=1,2,3$ lie in $\widetilde{P}_{i,1}$
\item $P_i$ lies on the left of each of the oriented boundary components of $P_i$ corresponding to the conjugacy classes $[\gamma_{1,i}]$, $[\gamma_{2,i}]$ and $[\gamma_{3,i}]$.
\end{itemize}
Then let  
\[\begin{array}{lll}
e_{1,i}:=\{\gamma_{1,i}^-,\gamma_{2,i}^-\}, &e_{2,i}:=\{\gamma_{2,i}^-,\gamma_{3,i}^-\},&e_{3,i}:=\{\gamma_{3,i}^-,\gamma_{1,i}^-\},\\
e'_{1,i}:=\{\gamma_{1,i}^-,\gamma_{3,i}^-\},&e'_{2,i}:=\{\gamma_{3,i}^-,\gamma_{1,i}^{-1}\cdot \gamma_{2,i}^-\},&e'_{3,i}:=\{\gamma_{1,i}^{-1}\cdot \gamma_{2,i}^-,\gamma_{1,i}^-\}
\end{array}\]
and recall that for $j=1,2,3$, $e_{j,i}$ and $e'_{j,i}$ is an edge in $\widetilde{\Tmc}$ (see Figure \ref{standard}). Define $\Delta_i:=\{e_{1,i},e_{2,i},e_{3,i}\}$, $\Delta'_i:=\{e'_{1,i},e'_{2,i},e'_{3,i}\}$, which are two triangles in $\Theta_{\widetilde{\Tmc}}$. Finally, let 
\begin{eqnarray*}
\Theta_i&:=&\{\gamma\cdot \Delta_i:\gamma\in\pi_1(S)\}\cup\{\gamma\cdot \Delta_i':\gamma\in\pi_1(S)\},\\
\Theta_{1,i}&:=&\{\gamma\cdot \Delta_i:\gamma\in\Gamma_{1,i}\}\cup\{\gamma\cdot \Delta_i':\gamma\in\Gamma_{1,i}\},\\
\Theta_{j,i}&:=&\gamma_j\cdot\Theta_{1,i},
\end{eqnarray*}
where $\Gamma_{1,i}:=\langle\gamma_{1,i},\gamma_{2,i},\gamma_{3,i}\rangle$, and $\gamma_j\in\pi_1(S)$ is a group element so that $\gamma_j\cdot\widetilde{P}_{i,1}=\widetilde{P}_{i,j}$. It is easy to see that $\Theta_i=\bigcup_{j=1}^\infty\Theta_{j,i}$.

We will now use $\Delta_i$ to iteratively define a sequence $\{N_k\}_{k=1}^\infty$ of nested collections of triangles that exhaust $\Theta_{1,i}$. First, let $N_1:=\Delta_i$. Then for all $k=2,\dots,\infty$, let $N_k$ be the collection of triangles that share a common edge with some triangle in $N_{k-1}$. It is clear that each $N_k$ is a finite collection, and $\bigcup_{k=1}^\infty N_k=\Theta_{1,i}$.

Next, choose an enumeration of $\Theta_{1,i}=\{T_{1,1},T_{2,1},\dots\}$ with the property that if $T_{l,1}\in N_k$ and $T_{l',1}\in N_{k'}\setminus N_k$ for some $k'>k$, then $l'>l$. For each $j>1$, the group element $\gamma_j\in\pi_1(S)$ induces an enumeration of $\Theta_{j,i}=\{T_{1,j},T_{2,j},\dots\}$. Using this, we can enumerate $\Theta_i$ so that the ordering on $\Theta_i$ induced by this enumeration restricts to the ordering on $\Theta_{j,i}$ induced by the enumeration of $\Theta_{j,i}$ specified above. For example, one can enumerate $\Theta_i$ by $\Theta_i=\{T_1,T_2,\dots\}$, where
$T_1:=T_{1,1}$, $T_2:=T_{2,1}$, $T_3:=T_{1,2}$, $T_4:=T_{3,1}$, $T_5:=T_{2,2}$, $T_6:=T_{1,3},\dots$. 

Similarly, let
\begin{eqnarray*}
\widetilde{\Qmc}_{1,i}&:=&\{\gamma\cdot e_{1,i}:\gamma\in\Gamma_{1,i}\}\cup\{\gamma\cdot e_{2,i}:\gamma\in\Gamma_{1,i}\}\cup\{\gamma\cdot e_{3,i}:\gamma\in\Gamma_{1,i}\},\\
\widetilde{\Qmc}_{j,i}&:=&\gamma_j\cdot\Qmc_{1,i},
\end{eqnarray*}
and as before, we have $\widetilde{\Qmc}_i=\bigcup_{j=1}^\infty\widetilde{\Qmc}_{j,i}$. Also, note that $\widetilde{\Qmc}_{j,i}$ is exactly the set of edges of the triangles in $\Theta_{j,i}$. Enumerate $\widetilde{\Qmc}_{1,i}=\{f_{1,1}:=e_{1,i},f_{2,1}:=e_{2,i},\dots\}$ so that if $f_{l,1}$ is an edge of a triangle in $N_k$ and $f_{l',1}$ is an edge of a triangle in $N_{k'}$ but not an edge of a triangle in $N_k$ for some $k'>k$, then $l'>l$. By the choice of $\gamma_j$ for each $j>1$, this enumeration of $\widetilde{\Qmc}_{1,i}$ induces an enumeration of $\widetilde{\Qmc}_{j,i}=\{f_{1,j},f_{2,j},\dots\}$. Using this, we can enumerate $\widetilde{\Qmc}_i$ by $\widetilde{\Qmc}_i=\{e_1,e_2,\dots\}$, where $e_1:=f_{1,1}$, $e_2:=f_{2,1}$, $e_3:=f_{1,2}$, $e_4:=f_{3,1}$, $e_5:=f_{2,2}$, $e_6:=f_{1,3},\dots$. 

%%%%%%%%%%%%%%%%%%%%%%%%%%%%%%%%%%%%%%%%%%%%%%%%%%
\subsection{Shearing and bulging flows} 
%%%%%%%%%%%%%%%%%%%%%%%%%%%%%%%%%%%%%%%%%%%%%%%%%%

First, we define the shearing and bulging flows associated to the closed leaves of $\Tmc$. These flows are special cases of the generalized twist flows previously considered by Goldman \cite{Goldman_twist}. Goldman also gave a geometric construction of these flows in \cite{Goldman_bulging}. 

Enumerate the elements in $\Pmc$ by $c_1,\dots,c_{3g-3}$. For each $i=1,\dots,3g-3$, choose an orientation on $c_i$. This induces an orientation on each edge in 
\[\Cmc_i:=\{d\in\widetilde{\Pmc}:[d]=c_i\}.\]
Furthermore, $\Cmc_i$ is a countable set, so we can enumerate $\Cmc_i=\{d_1,d_2,\dots\}$. 

For each $d_j\in\Cmc_i$, let $d_j^+$ and $d_j^-$ be the its forward and backward endpoints respectively. Then for any $t\in\Rbbb$ and any $j\in\Zbbb^+$, let 
\[(\beta_j)_t,(\psi_j)_t:\PGL(3,\Rbbb)\backslash\Dmc\to\PGL(3,\Rbbb)\backslash\Dmc\]
be the maps defined by $(\beta_j)_t:=(\beta_{d_j^+,d_j^-})_t$ and $(\psi_j)_t:=(\psi_{d_j^+,d_j^-})_t$. This allows us to define $(B_i)_t,(S_i)_t:\Cmc(S)\to\Cmc(S)$ by $(B_i)_t:=\prod_{j=1}^\infty\beta_j(t)$ and $(S_i)_t:=\prod_{j=1}^\infty\psi_j(t)$, where $\Cmc(S)$ is viewed as a subset of $\PGL(3,\Rbbb)\backslash\Dmc$ via the embedding described at the end of Section \ref{basics}. 

\begin{definition}
The flows $(B_i)_t$ and $(S_i)_t$ on $\Cmc(S)$ defined above are respectively the \emph{bulging flows} and \emph{shearing flows} corresponding to the simple closed curve $c_i\in\Pmc$.
\end{definition}

Using similar arguments as in the proofs of Theorem \ref{main theorem} and Theorem \ref{shear} below give the following theorem. In fact, since any pair of vertices in 
$\widetilde{\Tmc}$ intersects only finitely many elements in $\widetilde{\Pmc}$ the proof is this case is much simpler.  Thus we get the following theorem, which is a special case of Goldman's theorem \cite{Goldman_twist, Goldman_bulging} 
 
\begin{thm}
For all $i=1,\dots,3g-3$, $(B_i)_t$ and $(S_i)_t$ are well-defined, smooth, and do not depend on any of the choices we made to obtain the enumeration of $\widetilde{\Cmc}_i$.  
\end{thm}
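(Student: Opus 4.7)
The plan is to verify the theorem by passing to the Bonahon-Dreyer coordinates of Theorem~\ref{Bonahon-Dreyer} and checking that each coordinate of $(B_i)_t[\xi,\Omega]$ and $(S_i)_t[\xi,\Omega]$ is determined by a \emph{finite} composition of pairwise commuting elementary flows.

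First, I would observe that because $c_i$ is a simple closed curve, its lifts $\{d_j\}_{j\in\Zbbb^+}\subset\widetilde{\Pmc}$ form a $\pi_1(S)$-invariant family of pairwise disjoint edges in $\widetilde{\Tmc}$. By Proposition~\ref{cross ratio facts}(1), for any $j\neq k$ the elementary flows $\beta_j(t_1)$ and $\beta_k(t_2)$ (respectively $\psi_j(t_1)$ and $\psi_k(t_2)$) commute on $\PGL(3,\Rbbb)\backslash\Dmc$. In particular, any finite product $\prod_{j\in F}\beta_j(t)$ is independent of the ordering of $F$.

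The key finiteness input is the one highlighted in the theorem statement: for any pair of vertices $x_0,y_0$ of $\widetilde{\Tmc}$, the set $\Emc_{x_0,y_0}\cap\widetilde{\Pmc}$ is finite, since a compact geodesic segment in $\widetilde{S}$ meets the locally finite family $\widetilde{\Pmc}$ only finitely often. If $d_j\notin\Emc_{x_0,y_0}\cup\{e\}$ for the edge $e=\{x_0,y_0\}$, then $d_j^+$ and $d_j^-$ both lie on one side of $\{x_0,y_0\}$, and together with the vertices $z,z'$ used to compute $\sigma_{e,x_0}$ and $\sigma_{e,y_0}$ they sit in the cyclic configuration of Proposition~\ref{cross ratio facts}(2). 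That proposition therefore guarantees that $\beta_j(t)$ (resp.\ $\psi_j(t)$) does not alter $\sigma_{e,x_0}$ or $\sigma_{e,y_0}$. The same argument, applied via Proposition~\ref{triple ratio and cross ratio facts}(2), shows that any triangle parameter $\tau_T$ of $\widetilde{\Tmc}$ is unaffected by all but finitely many of the $\beta_j(t)$ (resp.\ $\psi_j(t)$). Finally, for the finitely many $d_j$ that \emph{do} interact with a given coordinate, Proposition~\ref{cross ratio facts}(2) (for the ``non-crossing'' $d_j$) and an explicit computation using the matrix form of $b_{x,y}(t)$ and $s_{x,y}(t)$ on the adjacent edges show that each shear parameter changes by an integer multiple of $\pm t$ or $0$, while every triangle parameter is preserved.

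Combining these observations: each of the $22g-22$ Bonahon-Dreyer coordinates of the image point is the value at $[\xi,\Omega]$ shifted by a well-defined real number that depends \emph{linearly and continuously} on $t$. In particular the image lies in the polytope $P$ of Theorem~\ref{Bonahon-Dreyer}, so the map $(B_i)_t$ (resp.\ $(S_i)_t$) is a well-defined, smooth map $\Cmc(S)\to\Cmc(S)$. Independence from the enumeration of $\widetilde{\Cmc}_i$ follows at once from the commutativity in Proposition~\ref{cross ratio facts}(1), since permuting the order of the finitely many elementary flows that actually move a given coordinate does not change the outcome. The flow property $(B_i)_{t_1}\circ(B_i)_{t_2}=(B_i)_{t_1+t_2}$ (and similarly for $S_i$) reduces to the analogous property of each $\beta_j$ and $\psi_j$.

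The only delicate point is the passage from the ``pointwise'' definition $\prod_{j=1}^{\infty}\beta_j(t)$ to a well-defined map on the quotient $\PGL(3,\Rbbb)\backslash\Dmc$: the infinite product of projective transformations need not converge, but since the Bonahon-Dreyer coordinates of the resulting pair $(\xi,\Omega)$ are unambiguously defined (each being a finite sum as above) and these coordinates give a global diffeomorphism with the polytope $P$, the induced deformation of $\Cmc(S)$ is legitimate. In the present pants-curve setting this issue is trivial compared to the analogous arguments for the eruption and internal bulging flows proved below (Theorems~\ref{main theorem} and \ref{shear}), where one must control countably many triangles or lifts that do accumulate on a given edge.
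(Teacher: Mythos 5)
Your coordinate analysis is correct and follows the same scheme the paper itself uses: the paper proves this theorem by the argument of Theorem~\ref{main theorem}, simplified by exactly the finiteness you identify (each edge and each triangle of $\widetilde{\Tmc}$ meets only finitely many lifts of $c_i$, so each Bonahon--Dreyer coordinate is moved by only finitely many of the pairwise commuting elementary flows; in fact only the two shear parameters along $c_i$ itself change, each by $\pm t$, and since these do not enter the closed leaf equalities the target coordinates do lie in the polytope $P$).

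The gap is in your last paragraph, and it is the actual content of ``well-defined''. The theorem asserts that the infinite product $\prod_{j=1}^\infty\beta_j(t)$ \emph{converges}; computing what the Bonahon--Dreyer coordinates of the limit would have to be does not show that a limit exists. The partial products $\prod_{k=1}^{j}\beta_k(t)(\xi,\Omega)$ are points of $\Dmc$, not $\pi_1(S)$-equivariant structures, so Theorem~\ref{Bonahon-Dreyer} cannot be applied to them, and your phrase ``the Bonahon--Dreyer coordinates of the resulting pair'' presupposes the very pair you are trying to construct. The missing step is the analogue of Lemma~\ref{convergence of vertices}: normalize the partial products $(\xi_j,\Omega_j)$ by projective transformations fixing a base triple of flags, as is done before Lemma~\ref{technical lemma}; then for any vertex $x_0$ of $\widetilde{\Tmc}$ only the finitely many lifts $d_k$ that separate $x_0$ from the base move $x_0$ after normalization (this is the finiteness of $\Emc_{x_0,y_0}\cap\widetilde{\Pmc}$ again), so the sequence $\big(\xi_j(x_0),\xi_j^*(x_0)\big)$ is eventually constant. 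Proposition~\ref{finite convergence} and the density of the vertices in $\partial\pi_1(S)$ then give convergence of $(\xi_j,\Omega_j)$ in $\Dmc$ to the representative of the point of $\Cmc(S)$ with your computed coordinates. Supplying this step is exactly why the paper calls this case ``much simpler'' than the eruption flow, where the normalized partial products are not eventually constant and the full force of Lemmas~\ref{technical lemma} and~\ref{neighboring invariants} is needed.
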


%%%%%%%%%%%%%%%%%%%%%%%%%%%%%%%%%%%%%%%%%%%%%%%%%%%%%%%%%%%%%%%%%%%%%%%%%%%%%%%%%%%%%%%%%%%%%%%%%%%%
\subsection{Eruption flows} 
%%%%%%%%%%%%%%%%%%%%%%%%%%%%%%%%%%%%%%%%%%%%%%%%%%%%%%%%%%%%%%%%%%%%%%%%%%%%%%%%%%%%%%%%%%%%%%%%%%%%
Now we define the eruption flows. For each $T_j\in\Theta_i$, let $a_j,b_j,c_j$ be the three vertices of $T_j$. Then for any $t\in\Rbbb$, let $(\epsilon_j)_t:\PGL(3,\Rbbb)\backslash\Dmc\to\PGL(3,\Rbbb)\backslash\Dmc$ be the eruption flow defined by
\[(\epsilon_j)_t:=\left\{\begin{array}{ll}(\epsilon_{a_j,b_j,c_j})_t&\text{if }T_j=\gamma\cdot \Delta_i\\
(\epsilon_{a_j,b_j,c_j})_{-t}&\text{if }T_j=\gamma\cdot \Delta_i'
\end{array}\right..\]
Then let $(E_i)_t:=\prod_{j=1}^\infty\epsilon_j(t):\Cmc(S)\to\Cmc(S)$, where $\Cmc(S)$ is again viewed as a subset of $\PGL(3,\Rbbb)\backslash\Dmc$. 

\begin{definition}
The flow $(E_i)_t$ on $\Cmc(S)$ defined above is the \emph{eruption flow} corresponding to the pair of pants $P_i\subset S$.
\end{definition}

The main theorem of this section is the well-definedness of $(E_i)_t$.

\begin{thm} \label{main theorem}
For all $i=1,\dots,2g-2$, $(E_i)_t$ is well-defined, smooth, and does not depend on any of the choices we made to obtain the enumeration of $\Theta_i$.  
\end{thm}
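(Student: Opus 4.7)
My plan is to translate the convergence problem into the finite-dimensional Bonahon-Dreyer coordinate system. By Theorem~\ref{Bonahon-Dreyer}, $\Cmc(S)$ is real-analytically diffeomorphic to a convex polytope in $\Rbbb^{22g-22}$ cut out by finitely many shear parameters $\sigma_{e,x},\sigma_{e,y}$ ($e\in\Tmc$) and triangle parameters $\tau_T$ ($T\in\Theta_\Tmc$). To show that $(E_i)_t$ is well-defined and smooth, it suffices to show that each of these finitely many parameters changes in a well-defined, smooth way under the infinite composition $\prod_{j=1}^\infty(\epsilon_j)_t$, and that the closed leaf equalities of Section~\ref{BonahonDreyer} are preserved so that the result lies in $\Cmc(S)$.

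The main tool is Proposition~\ref{triple ratio and cross ratio facts}. It says that an elementary eruption $(\epsilon_{x_1,x_2,x_3})_t$ leaves a shear parameter invariant whenever the four boundary points defining the parameter lie in suitable arcs of $S^1\setminus\{x_1,x_2,x_3\}$, and Proposition~\ref{triple ratio and cross ratio facts}(2) combined with Proposition~\ref{triple compute} pins down exactly which triangle parameters are moved. The first step of the proof is then a finiteness claim: for each fixed lift of an edge $e\in\Tmc$ (resp.\ triangle $T\in\Theta_\Tmc$) with boundary data $\{x,y,z,z'\}$ (resp.\ $\{x,y,z\}$), only finitely many triangles $T_j\in\Theta_i$ have the property that these boundary points fail to lie in a single arc cut out by the vertices of $T_j$. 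To verify this I would use the combinatorial description of $\overline{\Emc}_{x_0,y_0}$ from the previous section: the triangles of $\Theta_i$ whose vertices separate $\{x,y,z,z'\}$ must have at least one edge in some $\Emc_{x_0,y_0}$ or be adjacent to such an edge, and from the decomposition $\Emc_{x_0,y_0}=\bigcup_j\Emc_j\cup\bigcup_j\Emc_{j,j+1}$ one reads off that only finitely many of these can belong to a single pair of pants $P_i$ (the triangles in each $\Emc_j$ fan out toward a single closed leaf and so their vertices eventually all land in one arc). This reduces $(E_i)_t$ on each coordinate to a finite product of elementary flows, giving both well-definedness and smoothness in $t$.

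Independence of the enumeration of $\Theta_i$ follows from Proposition~\ref{triple ratio facts}(1): elementary eruption flows at distinct triangles commute on $\PGL(3,\Rbbb)\backslash\Dmc$. Since on each Bonahon-Dreyer coordinate only finitely many elementary eruptions have effect and they commute, the limit is insensitive to the order in which they are applied. For the compatibility of the flowed data with the $\pi_1(S)$-action, equivariance is built into the definition because we act with the same parameter $t$ on every $\pi_1(S)$-translate of $\Delta_i$ and $\Delta_i'$, so the holonomy representation is simply conjugated; this equivariance immediately preserves the closed leaf \emph{equalities}, which are algebraic consequences of $\pi_1(S)$-equivariance. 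The closed leaf \emph{inequalities} are open, hence preserved for small $t$; globally they are preserved because they measure $\log|\lambda_1/\lambda_2|$ and $\log|\lambda_2/\lambda_3|$ of the boundary holonomies of $P_i$, and these boundary holonomies are invariant under $(E_i)_t$ since the elementary eruptions fix every edge of $\widetilde{\Pmc}$ pointwise (Proposition~\ref{triple ratio and cross ratio facts}(2) applied in the appropriate direction). Thus the flowed structure remains in $\Cmc(S)$ for all $t\in\Rbbb$.

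The main obstacle, and the step I expect to require the most care, is the finiteness claim of the second paragraph: one must precisely match the geometric invariance statements of Proposition~\ref{triple ratio and cross ratio facts} against the combinatorial layout of the triangulation $\widetilde{\Tmc}$ inside the infinitely many lifts $\widetilde{P}_{i,j}$ of $P_i$. Everything else is either a direct application of the elementary commutation relations already proved, a continuity/open-condition argument, or a formal consequence of $\pi_1(S)$-equivariance built into the simultaneous definition of the flow at all translates of $\Delta_i$ and $\Delta_i'$.
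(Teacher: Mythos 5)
There is a genuine gap, and it sits exactly where you flagged the ``step requiring the most care'': the finiteness claim in your second paragraph is false, and with it the entire reduction to a finite product of elementary flows collapses. Consider a closed edge $\{x,y\}\in\widetilde{\Pmc}$ bounding a lift of $P_i$, say $x=\gamma^-$, $y=\gamma^+$ for a peripheral $\gamma$ of $P_i$. The triangles $\gamma^k\cdot\Delta_i$, $\gamma^k\cdot\Delta_i'$ all belong to $\Theta_i$, all have $x$ as a vertex, and their remaining vertices accumulate at $y$ from one side. Consequently, for \emph{every} $k$ these triangles separate the four points $x,z,z',y$ defining the shear parameter $\sigma_{e,x}$ (the points $z$ and $z'$ lie on opposite sides of the axis of $\gamma$, while $y$ is never trapped in the shrinking arc between the two moving vertices), so Proposition~\ref{triple ratio and cross ratio facts} never applies and each of these infinitely many elementary eruptions genuinely moves the flag at $y$ and hence the parameter $\sigma_{e,x}$. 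The triangles of $\Emc_j$ do not ``eventually land in one arc'' with respect to these four points precisely because the closed leaf in question is their accumulation set. This is why the paper's proof cannot avoid Lemma~\ref{technical lemma}: after normalizing, the flag at $y$ is the image of the original flag under the infinite product $\prod_{k=0}^\infty r^k u_t r^{-k}$, whose convergence depends on $u_t=g^{-1}h_t^{-1}gh_t$ being \emph{unipotent} with fixed flag equal to the repelling flag of the diagonalizable element $r$. The remark following Theorem~\ref{shear} is a direct counterexample to your method: replacing the elementary bulging flows by elementary shearing flows along the internal edges produces a flow that is \emph{not} well defined, even though your finiteness/commutation argument, if valid, would apply verbatim to that case as well.

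The parts of your argument that do survive are the peripheral ones: the shear parameters at edges not bounding $P_i$ and the triangle parameters at triangles other than $[\Delta_i],[\Delta_i']$ are eventually constant along the partial compositions (this is the content of the paragraph preceding Lemma~\ref{technical lemma}, proved exactly as you suggest via Propositions~\ref{triple compute} and \ref{triple ratio and cross ratio facts}), and the preservation of the closed leaf equalities and inequalities, hence of the boundary holonomy eigenvalues, is correctly explained by the cancellation $\tau_{[\Delta_i]}\mapsto\tau_{[\Delta_i]}+t$, $\tau_{[\Delta_i']}\mapsto\tau_{[\Delta_i']}-t$. But to complete the proof you must (i) establish convergence of the three pairs of closed-edge shear parameters via the infinite-product analysis of Lemma~\ref{technical lemma}, with the positivity of the limits supplying the closed leaf inequalities; (ii) upgrade coordinate convergence to convergence of the boundary maps $(\xi_j,\xi_j^*)$ at every vertex (Lemma~\ref{convergence of vertices}, which uses the finite-configuration convergence criterion of Proposition~\ref{finite convergence} together with the auxiliary invariants of Lemma~\ref{neighboring invariants}) and then to all of $\partial\pi_1(S)$ by density; and (iii) deduce enumeration-independence and smoothness from the fact that the limiting Bonahon--Dreyer parameters, not the partial products, are manifestly independent of the enumeration and smooth in $t$.
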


\begin{remark} 
Note that for the eruption flow on $\Cmc(S)$ we simultaneously apply the elementary eruption flow in positive direction to the triangles in the $\pi_1(S)$-orbit of $\Delta_i$ and the elementary eruption flow in negative direction to the triangles in the $\pi_1(S)$-orbit of $\Delta_i'$. This is crucial in order to prove convergence and get a well-defined flow $(E_i)_t$ on $\Cmc(S)$.
 
That this is a natural and necessary thing to do can be seen from the closed leaf equalities of Bonahon-Dreyer, see Section~\ref{BonahonDreyer}: 
\[\sigma_{e_1,x}+\sigma_{e_2,x}=\sigma_{e_1',x}+\sigma_{e_2',x}+\tau_{T_1'}+\tau_{T_2'}=\log\left|\frac{\lambda_1}{\lambda_2}\right|\]
and
\[\sigma_{e_1,y}+\sigma_{e_2,y}+\tau_{T_1}+\tau_{T_2}=\sigma_{e_1',y}+\sigma_{e_2',y}=\log\left|\frac{\lambda_2}{\lambda_3}\right|.\]
Using the closed leaf inequalities for the three pants curves for $P_i$, it is easy to see that if we do not want to change the eigenvalues of the holonomy around the three pants curves, then the sum of the two triangle parameters for the two triangles in $P_i$ is necessarily constant.
%If we do not want to change the shear coordinates (and hence the holonomy around the simple closed curves of $\Pmc$), it is natural to keep the sum of the triangle parameters constant. 

The eruption flows are even more natural in Zhang's reparametrization of the Bonahon-Dreyer coordinates, where only one of the triangle invariants is taken into account - the other one being determined by the closed leaf equalities. 
\end{remark}

Choose any $[f,\Sigma]\in\Cmc(S)$, and let $(\xi,\Omega)\in\Dmc$ be a point so that $[\xi,\Omega]\in\PGL(3,\Rbbb)\backslash\Dmc$ corresponds to $[f,\Sigma]$. For all $j=1,\dots,\infty$, let 
\[(\xi_j',\Omega_j'):=\prod_{k=1}^j(\epsilon_j)_t(\xi,\Omega)\in\Dmc.\] 
Then let $g_j\in\PGL(3,\Rbbb)$ be the group element so that 
\begin{eqnarray*}
g_j\cdot \big(\xi_j'(\gamma_{1,i}^-),(\xi_j')^*(\gamma_{1,i}^-)\big)&=&\big(\xi(\gamma_{1,i}^-),\xi^*(\gamma_{1,i}^-)\big),\\
g_j\cdot \big(\xi_j'(\gamma_{2,i}^-),(\xi_j')^*(\gamma_{2,i}^-)\big)&=&\big(\xi(\gamma_{2,i}^-),\xi^*(\gamma_{2,i}^-)\big),\\
g_j\cdot \xi_j'(\gamma_{3,i}^-)&=&\xi(\gamma_{3,i}^-).
\end{eqnarray*}
Then define $\xi_j:=g_j\circ\xi_j'$ and $\Omega_j:=g_j\cdot\Omega_j'$. Note that in $\Dmc\backslash\PGL(3,\Rbbb)$, $[\xi_j,\Omega_j]=[\xi_j',\Omega_j']=\prod_{k=1}^j(\epsilon_j)_t[\xi,\Omega]$. 

By Proposition \ref{triple compute}, Proposition \ref{triple ratio and cross ratio facts}, and the fact that the triple ratio and cross ratio are invariant under projective transformations, we know that for any $k\in\Zbbb^+$ and for all $j\geq k$, 
\[\tau_{\xi_j,\xi_j^*}(a_k,b_k,c_k)=\left\{\begin{array}{ll}
\tau_{\xi,\xi^*}(a_k,b_k,c_k)+t&\text{if }T_k=\gamma\cdot\Delta_i\text{ for some }\gamma\in\pi_1(S)\\
\tau_{\xi,\xi^*}(a_k,b_k,c_k)-t&\text{if }T_k=\gamma\cdot\Delta_i'\text{ for some }\gamma\in\pi_1(S)\\
\end{array}\right.,\]
where $a_k,b_k,c_k$ are the vertices of $T_k$ so that $a_k<b_k<c_k<a_k$.
Also, if $T=\big\{\{a,b\},\{b,c\},\{c,a\}\big\}$ is a triangle so that $T\neq \gamma\cdot\Delta_i$ and $T\neq\gamma\cdot\Delta_i'$ for all $\gamma\in\pi_1(S)$, then for all $j\in\Zbbb^+$, we have that $\tau_{\xi_j,\xi_j^*}(a,b,c)=\tau_{\xi,\xi^*}(a,b,c)$. For the same reasons, 
\begin{eqnarray*}
\sigma_{\xi_j,\xi_j^*}(x,z_{x,y},z'_{x,y},y)&=&\sigma_{\xi,\xi^*}(x,z_{x,y},z'_{x,y},y)\,\,\,\text{ and }\\
\sigma_{\xi_j,\xi_j^*}(y,z'_{x,y},z_{x,y},x)&=&\sigma_{\xi,\xi^*}(y,z'_{x,y},z_{x,y},x)
\end{eqnarray*}
for any $\{x,y\}\in\Tmc$ that do not correspond to the three boundary curves of $P_i$.

Now, we will state and give a sketch of the proof of the main technical lemma we need to prove Theorem \ref{main theorem}.

\begin{lem}\label{technical lemma}
Let $\{x,y\}\in\widetilde{\Pmc}$ and let $z,z'$ be vertices of $\widetilde{\Tmc}$ so that $\{x,z\},\{y,z'\}\in\widetilde{\Tmc}$. Then the sequences 
\[\{\sigma_{\xi_j,\xi_j^*}(x,z,z',y)\}_{j=1}^\infty,\,\,\,\,\, \{\sigma_{\xi_j,\xi_j^*}(y,z',z,x)\}_{j=1}^\infty,\] 
\[\{\tau_{\xi_j,\xi_j^*}(x,z,y)\}_{j=1}^\infty,\,\,\,\,\,\{\tau_{\xi_j,\xi_j^*}(y,z',x)\}_{j=1}^\infty\] 
converge to positive real numbers.
\end{lem}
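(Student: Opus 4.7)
The plan is to split $\Theta_i$ into ``inactive'' triangles whose elementary eruption preserves each of the four invariants and ``active'' triangles whose contributions telescope thanks to the sign alternation $\pm t$ between $\Delta$-type and $\Delta'$-type triangles. Since shears and triple ratios are projectively invariant, $\sigma_{\xi_j,\xi_j^*} = \sigma_{\xi_j',(\xi_j')^*}$ and similarly for $\tau$, so the normalizing projective transformations $g_j$ play no role and I would work with the unnormalized composition $\xi_j' = \prod_{k=1}^j(\epsilon_k)_t(\xi)$ throughout.

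Because $\{x,y\}\in\widetilde{\Pmc}$ is a closed edge and no $T_k\in\Theta_i$ has $\{x,y\}$ as one of its edges, each $T_k$ is contained in a single pants-lift $\widetilde{P}_{i,\ell}$ and lies entirely on one side of $\{x,y\}$, possibly sharing the vertex $x$ or $y$. Applying Proposition~\ref{triple ratio and cross ratio facts}(1) and Proposition~\ref{triple ratio facts}(3), I would check that $(\epsilon_k)_t$ leaves each of the four invariants unchanged whenever all the points appearing in the defining cross ratio (respectively triple ratio) lie in a common arc of $\partial\pi_1(S)$ cut by the three vertices of $T_k$, so that the corresponding flags are transformed by a common $g_i(t)$ of Section~3.2. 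This criterion is automatically satisfied by every $T_k$ whose vertices are cyclically separated from $\{z,z'\}$ relative to the edge $\{x,y\}$: for instance, every $T_k$ contained in a pants-lift not bounding $\{x,y\}$, and every $T_k$ in a bounding pants-lift whose two non-axis vertices cluster on the arc closer to the axis endpoint than to $z$ or $z'$.

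The remaining active triangles form infinite families spiraling into $x$ or $y$, namely the translates $\gamma^k\cdot\Delta_i$ and $\gamma^k\cdot\Delta_i'$ where $\gamma$ is a peripheral element of $\pi_1(\widetilde{P}_{i,\ell})$ whose axis coincides with or shares an endpoint of $\{x,y\}$. These come in adjacent pairs sharing an edge, carrying opposite signs $\pm t$ in $(\epsilon_j)_t$. I would compute directly the combined action of $(\epsilon_{\gamma^k\cdot\Delta_i})_t\circ(\epsilon_{\gamma^k\cdot\Delta_i'})_{-t}$ on each of the four invariants using the matrices $g_1(t),g_2(t),g_3(t)$ from Section~3.2 and show that the residual deviation decays in $|k|$ as the triangle pairs contract toward the axis $\{x,y\}$, with rate controlled by the eigenvalue ratios of $\rho(\gamma)$ (Theorem~\ref{properties of convex projective surfaces}). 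Summing these residuals over $k$ gives a convergent series, hence a Cauchy tail, so each of the four sequences converges. For positivity, the initial $(\xi,\Omega)=\Phi[f,\Gamma\backslash\Omega]$ has positive shear parameters by Theorem~\ref{Bonahon-Dreyer} and positive triple ratios by the $\Fmc^+$-positivity of the $\pi_1(S)$-equivariant flag curve on $\partial\Omega$ (implicit in Theorem~\ref{properties of convex projective surfaces} and the discussion in Section~2.2); each elementary eruption keeps $(\xi_j',\Omega_j')$ in $\Dmc$, preserving these positivity conditions, and the decay estimate precludes a zero limit.

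The main obstacle is carrying out the quantitative decay estimate after pairing. One must compute in coordinates adapted to the axis $\{x,y\}$, track how the nearly cancelling $\pm t$ corrections propagate through the cross-ratio and triple-ratio formulas of Section~2, and verify that the residue is controlled by the spectral gap of the peripheral holonomy $\rho(\gamma)$, so that the contributions from the spiraling family sum absolutely.
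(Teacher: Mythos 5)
Your overall strategy---pairing the $+t$ eruption on each $\gamma^k\cdot\Delta_i$ with the $-t$ eruption on the adjacent $\gamma^k\cdot\Delta_i'$ and arguing that the tail of the spiraling family contributes a summable perturbation---is the same as the paper's. But the step you defer as ``the main obstacle'' is in fact the entire content of the lemma, and the heuristic you offer for why the paired residuals decay is not sufficient. You attribute the decay to the fact that ``the triangle pairs contract toward the axis,'' with ``rate controlled by the eigenvalue ratios of $\rho(\gamma)$.'' Geometric contraction of the triangles toward the axis does \emph{not} by itself imply that the induced perturbations of the flag at the far endpoint $y$ tend to the identity. The paper's proof hinges on an explicit computation showing that the combined effect of one adjacent pair, in coordinates normalized at $x$, is a projective transformation $u_t=g^{-1}h_t^{-1}gh_t$ that is \emph{unipotent} and fixes the flag at $x$; the $k$-th pair then acts by $r^ku_tr^{-k}$, where $r$ is diagonalizable with distinct eigenvalues and its \emph{repelling} flag is exactly the fixed flag of $u_t$. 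Only this conjunction---unipotence plus the alignment of the fixed flag with the repelling flag of $r$---forces $r^ku_tr^{-k}\to\id$ and makes $\prod_{k\ge0}r^ku_tr^{-k}$ converge. The paper itself points out (end of Section 6) that replacing the bulging flows by shearing flows along the same internal edges produces a family of triangles contracting toward the axis at exactly the same rate, with the same spectral gap, and yet the flow is \emph{not} well defined, precisely because the analogue of $u_t$ fails to be unipotent. So your stated reason for convergence would ``prove'' a false statement; the missing idea is the identification of the paired transformation as a commutator-type unipotent element fixing the flag at $x$.

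Two further points are glossed over. First, convergence of the flags (or of the increments of the invariants) is not enough to conclude that the four invariants converge to \emph{finite, positive} values: one must show that the limiting flags at $x$, $y$, $z$, $z'$ remain pairwise transverse, otherwise the cross ratios and triple ratios could degenerate to $0$ or $\infty$. The paper gets transversality of the limit flags at $x$ and $y$ from the fact that the limiting product is a unipotent transformation fixing the flag at $x$, and transversality at $y$ and $z$ from a separate nesting argument with the two triangles cut out by $[\xi(x),\xi(w)]$, $\xi^*(x)$, $\xi^*(w)$; your ``the decay estimate precludes a zero limit'' does not supply this. Second, your reduction to the case where both $\{x,z\}$ and $\{y,z'\}$ are treated symmetrically hides the fact that when both lie in lifts of $P_i$ the two spiraling families at $x$ and at $y$ interact, which the paper flags as requiring a modification of the argument; this should at least be acknowledged rather than folded into the generic case.
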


\begin{proof}
Assume that $\{x,y\}$ in $\Pmc$ corresponds to a boundary component of $P_i$ (otherwise, the lemma is true by the comments above). We will first focus on the sequence $\{\tau_{\xi_j,\xi_j^*}(x,z,y)\}_{j=1}^\infty$. If $\{x,z\}$ is not an edge in $\widetilde{P}_{i,j}$ for some $j=1,\dots,\infty$, then it is immediate that $\{\tau_{\xi_j,\xi_j^*}(x,z,y)\}_{j=1}^\infty$ is the constant sequence and therefore converges. Thus, assume that $\{x,z\}$ is an edge in $\widetilde{P}_{i,j}$ for some $j$.

Let $\gamma\in\pi_1(S)$ be the primitive group element with $x$ and $y$ as its repelling and attracting fixed points respectively. Then let $w\in\partial\pi_1(S)$ be the point so that $\{w,x\}\in\widetilde{\Tmc}$ and $w\in(z,\gamma\cdot z)_x$ (see Notation \ref{interval notation}). Similarly, let $w'\in\partial\pi_1(S)$ be the point so that $\{w',y\}\in\widetilde{\Tmc}$ and $w'\in (z',\gamma^{-1}\cdot z')_x$. For all $j=1,\dots,\infty$, let $\overline{g}_j$ be a group element so that 
\[\overline{g}_j\cdot \xi_j(x)=\xi(x),\,\,\,\,\overline{g}_j\cdot \xi_j(z)=\xi(z),\,\,\,\,\overline{g}_j\cdot \xi_j(w)=\xi(w),\] 
\[\overline{g}_j\cdot \xi_j^*(x)=\xi^*(x),\,\,\,\,\overline{g}_j\cdot \xi_j^*(z)=\xi^*(z),\]
and let $(\overline{\xi}_j,\overline{\xi^*_j}):=\overline{g}_j\cdot(\xi_j,\xi_j^*)$. By definition,
\[\big(\overline{\xi_j}(x),\overline{\xi_j^*}(x)\big)=\big(\xi(x),\xi^*(x)\big),\,\,\,\, \big(\overline{\xi_j}(z),\overline{\xi_j^*}(z)\big)=\big(\xi(z),\xi^*(z)\big)\,\,\,\,\text{ and }\,\,\,\, \overline{\xi_j}(w)=\xi(w),\]\
for all $j\in\Zbbb^+$.

\begin{figure}[ht]
\centering
\includegraphics[scale=0.5]{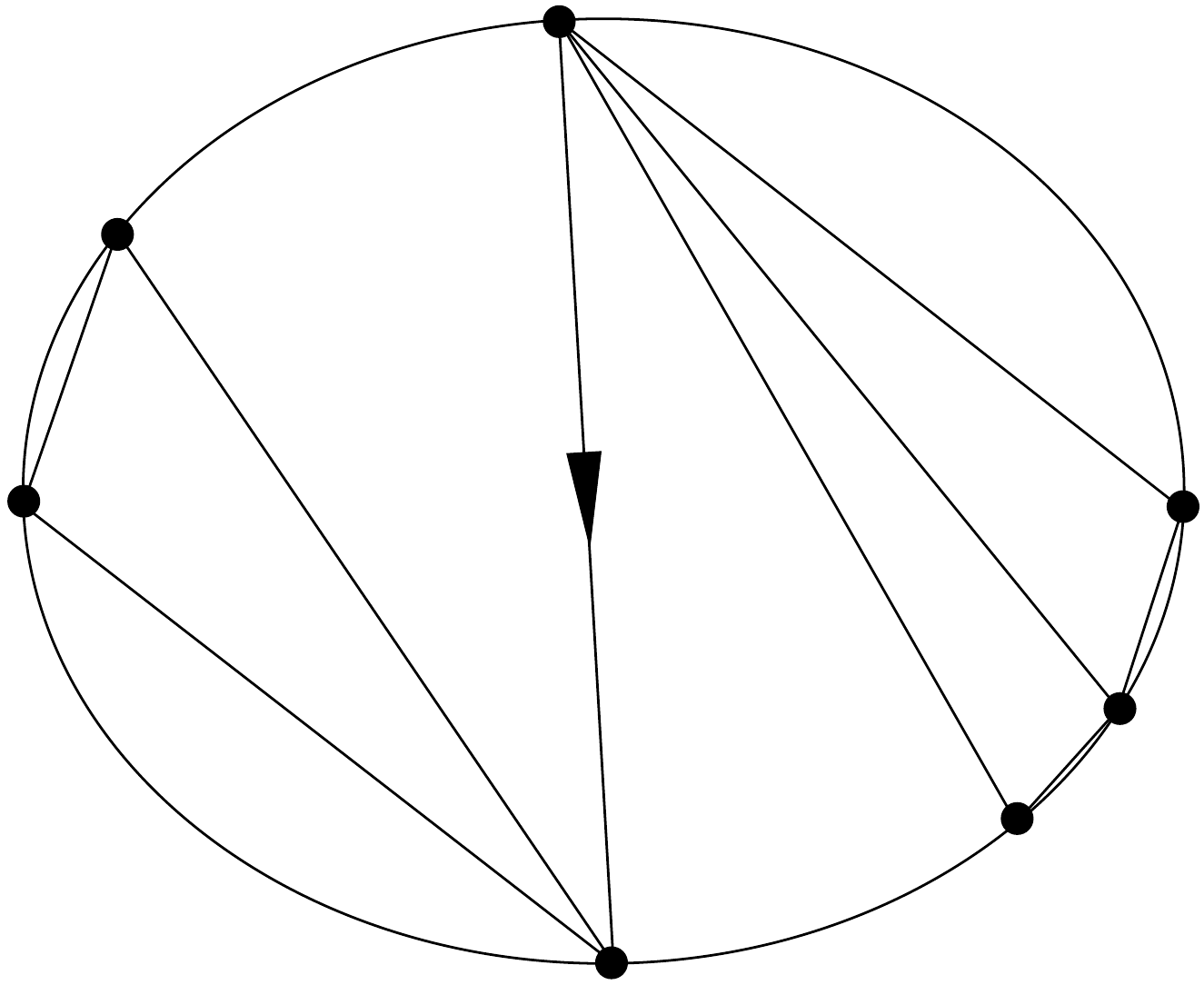}
\small
\put (-97, -4){$y$}
\put (-105, 157){$x$}
\put (0, 74){$z$}
\put (-197, 75){$z'$}
\put (-10, 42){$w$}
\put (-183, 120){$w'$}
\put (-26, 22){$\gamma\cdot z$}
\put (-107, 78){$\gamma$}
\put (-30, 74){$T$}
\put (-38, 50){$T'$}
\caption{Proof of Lemma \ref{technical lemma}.}\label{eruptionproof}
\end{figure}

Let $T:=\big\{\{x,z\},\{z,w\},\{w,x\}\big\}$ and let $T':=\big\{\{x,w\},\{w,\gamma\cdot z\},\{\gamma\cdot z,x\}\big\}$ (see Figure \ref{eruptionproof}). Consider the sequence of triangles 
\[\{T,T',\gamma\cdot T,\gamma\cdot T',\dots,\gamma^k\cdot T,\gamma^k\cdot T',\gamma^{k+1}\cdot T,\dots\}=\{T_{j_1},T_{j_2},\dots\},\]
where the right-hand side is written using the enumeration of $\Theta_i$. Then observe that $j_1<j_2<\dots$. Also, let $h_t$ be the projective transformation
\[h_t:=\left[\begin{array}{ccc}e^\frac{t}{3}&0&0\\0&e^{-\frac{t}{3}}&0\\0&0&1\end{array}\right]\left[\begin{array}{ccc}1&0&0\\0&e^{\frac{t}{3}}&0\\0&0&e^{-\frac{t}{3}}\end{array}\right]^{-1}=\left[\begin{array}{ccc}e^{\frac{t}{3}}&0&0\\0&e^{-\frac{2t}{3}}&0\\0&0&e^{\frac{t}{3}}\end{array}\right]\]
written in the basis $\{v_w,v_x,v_z\}$, where $[v_q]=\xi(q)$ for $q=w,x,z$. 

By the definition of $(\overline{\xi}_j,\overline{\xi^*_j})$, we see that 
\[\big(\overline{\xi_{j_1}}(p),\overline{\xi_{j_1}^*}(p)\big)=h_t\cdot\big(\xi(p),\xi^*(p)\big)\]
for all $p\in\{q\in\partial\pi_1(S):q=y\text{ or }q\text{ is a vertex of }T_{j_k}\text{ for some }k\geq 2\}$. (Note that $h_t$ fixes $\big(\xi(x),\xi^*(x)\big)$ and $\xi(w)$.) Furthermore, for all $k\geq j_1$, 
\[\overline{\xi_k^*}(w)=\overline{\xi^*_{j_1}}(w)\,\,\,\,\text{ and }\,\,\,\,\overline{\xi_k}(\gamma\cdot z)=\overline{\xi_{j_1}}(\gamma\cdot z).\]

Next, let $g$ the projective transformation that fixes the flag $\big(\overline{\xi_{j_1}}(x), \overline{\xi_{j_1}^*}(x)\big)=\big(\xi(x),\xi^*(x)\big)$, sends the flag $\big(\overline{\xi_{j_1}}(w),\overline{\xi_{j_1}^*}(w)\big)$ to $\big(\overline{\xi_{j_1}}(z),\overline{\xi_{j_1}^*}(z)\big)$ and sends the point $\overline{\xi_{j_1}}(\gamma\cdot z)$ to $\overline{\xi_{j_1}}(w)$. Again by the definition of $(\overline{\xi_j},\overline{\xi_j^*})$, we have that 
\[\big(\overline{\xi_{j_2}}(p),\overline{\xi_{j_2}^*}(p)\big)=g^{-1}h_{-t}g\cdot\big(\overline{\xi_{j_1}}(p),\overline{\xi_{j_1}^*}(p)\big)=g^{-1}h_t^{-1}gh_t\cdot\big(\xi(p),\xi^*(p)\big)\]
for all $p\in\{q\in\partial\pi_1(S):q=y\text{ or }q\text{ is a vertex of }T_{j_k}\text{ for some }k\geq 3\}$. (Note that $g^{-1}h_{-t}g$ fixes $\big(\xi(x),\xi^*(x)\big)$ and $\overline{\xi_{j_1}}(\gamma\cdot z)$.)  Furthermore, for all $k\geq j_2$,
\[\overline{\xi_k^*}(\gamma\cdot z)=\overline{\xi_{j_2}^*}(\gamma\cdot z)\,\,\,\,\text{ and }\,\,\,\,\overline{\xi_k}(\gamma\cdot w)=\overline{\xi_{j_2}}(\gamma\cdot w).\]
Let $u_t:=g^{-1}h_t^{-1}gh_t$, and observe that $u_t$ is a unipotent projective transformation that fixes $\big(\xi(x),\xi^*(x)\big)$. 

Let $r$ be the projective transformation that fixes the flag $\big(\overline{\xi_{j_2}}(x),\overline{\xi_{j_2}^*}(x)\big)$, sends the flag $\big(\overline{\xi_{j_2}}(z),\overline{\xi_{j_2}^*}(z)\big)$ to $\big(\overline{\xi_{j_2}}(\gamma\cdot z),\overline{\xi_{j_2}^*}(\gamma\cdot z)\big)$ and sends the point $\overline{\xi_{j_2}}(w)$ to $\overline{\xi_{j_2}}(\gamma\cdot w)$. It is easy to see that $r$ is diagonalizable with eigenvalues having pairwise distinct absolute values. As a consequence of Proposition \ref{finite parameterization}, for all $k\geq j_3$, we have $\overline{\xi_k^*}(\gamma\cdot w)=r\cdot \overline{\xi_{j_2}^*}(\gamma\cdot w)$. For the same reasons,
\[\big(\overline{\xi_{j_4}}(p),\overline{\xi_{j_4}^*}(p)\big)=ru_tr^{-1}\cdot\big(\overline{\xi_{j_2}}(p),\overline{\xi_{j_2}^*}(p)\big)=ru_tr^{-1}u_t\cdot\big(\xi(p),\xi^*(p)\big)\]
for all $p\in\{q\in\partial\pi_1(S):q=y\text{ or }q\text{ is a vertex of }T_{j_k}\text{ for some }k\geq 5\}$. In particular, $r$ fixes the flag $\big(\overline{\xi_{j_4}}(x),\overline{\xi_{j_4}^*}(x)\big)=\big(\xi(x),\xi^*(x)\big)$, sends the flag $\big(\overline{\xi_{j_4}}(\gamma\cdot z),\overline{\xi_{j_4}^*}(\gamma\cdot z)\big)$ to $\big(\overline{\xi_{j_4}}(\gamma^2\cdot z),\overline{\xi_{j_4}^*}(\gamma^2\cdot z)\big)$ and sends the point $\overline{\xi_{j_4}}(\gamma\cdot w)$ to $\overline{\xi_{j_4}}(\gamma^2\cdot w)$.

By iterating this process, we then see that 
\[\lim_{j\to\infty}\big(\overline{\xi_j}(y),\overline{\xi_j^*}(y)\big)=\prod_{k=0}^\infty r^ku_tr^{-k}\cdot\big(\xi(y),\xi^*(y)\big).\]
Here, the product of projective transformations $\prod_{k=0}^\infty r^ku_tr^{-k}$ converges because $u_t$ is unipotent, $r$ is diagonalizable with pairwise distinct eigenvalues, and the repelling flag of $r$ is exactly the fixed flag of $u_t$. 

More explicitly, if we let $v_1,v_2,v_3\in\Rbbb^3$ be eigenvectors of $r$, enumerated in increasing order of the absolute values of the corresponding eigenvalues, then
\[r=\left[\begin{array}{ccc}\alpha&0&0\\0&\beta&0\\0&0&\gamma\end{array}\right]\,\,\,\,\text{ and }\,\,\,\,u_t=\left[\begin{array}{ccc}1&a&b\\0&1&c\\0&0&1\end{array}\right]\]
in the basis $\{v_1,v_2,v_3\}$, with $\alpha<\beta<\gamma$. It is then an elementary computation to show that
\[\prod_{k=0}^\infty r^ku_tr^{-k}=\left[\begin{array}{ccc}1&\frac{a\beta}{\beta-\alpha}&\frac{b\gamma}{\gamma-\alpha}+\frac{ac\alpha\gamma}{(\beta-\alpha)(\gamma-\alpha)}\\0&1&\frac{c\gamma}{\gamma-\beta}\\0&0&1\end{array}\right].\]
In particular, $\prod_{k=0}^\infty r^ku_tr^{-k}$ is a unipotent projective transformation that fixes the flag $\big(\xi(x),\xi^*(x)\big)$. 

Since $\lim_{j\to\infty}\big(\overline{\xi_j}(y),\overline{\xi_j^*}(y)\big)$ is the image of $\big(\xi(y),\xi^*(y)\big)$ under a unipotent projective transformation that fixes $\big(\xi(x),\xi^*(x)\big)$, it follows from the transversality of $\big(\xi(y),\xi^*(y)\big)$ and $\big(\xi(x),\xi^*(x)\big)$ that
$\lim_{j\to\infty}\big(\overline{\xi_j}(y),\overline{\xi_j^*}(y)\big)$ and $\lim_{j\to\infty}\big(\overline{\xi_j}(x),\overline{\xi_j^*}(x)\big)$ are also transverse. 

On the other hand, the projective line segment $[\xi(x),\xi(w)]$ and the projective lines $\xi^*(x)$, $\xi^*(w)$ determine two triangles in $\Rbbb\Pbbb^2$, one of which contains $\overline{\xi_j}(z)$ for all $j$ and the other contains $\overline{\xi_j}(y)$ for all $j$. It is also easy to see that $\overline{\xi_j^*}(z)=\xi^*(z)$ does not intersect the closure of the latter triangle for all $j$, so $\lim_{j\to\infty}\overline{\xi_j}(y)$ does not lie in $\lim_{j\to\infty}\overline{\xi_j^*}(z)=\xi^*(z)$. Performing the same argument in $(\Rbbb\Pbbb^2)^*$ proves that $\lim_{j\to\infty}\overline{\xi_j^*}(y)$ does not contain $\lim_{j\to\infty}\overline{\xi_j}(z)$. 

This thus proves that the triple of flags 
\[\lim_{j\to\infty}\big(\overline{\xi_j}(x),\overline{\xi_j^*}(x)\big),\,\,\,\, \lim_{j\to\infty}\big(\overline{\xi_j}(y),\overline{\xi_j^*}(y)\big),\,\,\,\, \lim_{j\to\infty}\big(\overline{\xi_j}(z),\overline{\xi_j^*}(z)\big)\]
is pairwise transverse. As such, the triple ratio of this triple must be either positive or negative. Since they are limits of triples of pairwise transverse flags whose triple ratios are positive, and the triple ratio varies continuously on $\Fmc_3^+$, we can conclude that the triple ratio of this triple is positive. It follows immediately that the sequence $\{\tau_{\xi_j,\xi_j^*}(y,z,x)\}_{j=1}^\infty$ converges to a positive real number. The same argument, using the points $w'$ and $z'$ in place of $w$ and $z$, proves that $\{\tau_{\xi_j,\xi_j^*}(x,z',y)\}_{j=1}^\infty$ also converges to a positive real number.

Now, we deal with the convergence of the shear parameters under the eruption flow, thus we consider the sequences $\{\sigma_{\xi_j,\xi_j^*}(x,z,z',y)\}_{j=1}^\infty$ and $\{\sigma_{\xi_j,\xi_j^*}(y,z',z,x)\}_{j=1}^\infty$. Suppose that only one of $\{x,z\}$ or $\{y,z'\}$ lies in $\widetilde{P}_{i,j}$ for some $j$. If $\{y,z'\}$ does not lie in $\widetilde{P}_{i,j}$ for some $j$, it follows from the above argument that 
\[\lim_{j\to\infty}\big(\overline{\xi_j}(z'),\overline{\xi_j^*}(z')\big)=\prod_{k=0}^\infty r^ku_tr^{-k}\cdot\big(\xi(z'),\xi^*(z')\big),\]
which allows us to conclude that the quadruple of flags
\[\lim_{j\to\infty}\big(\overline{\xi_j}(x),\overline{\xi_j^*}(x)\big),\,\,\,\, \lim_{j\to\infty}\big(\overline{\xi_j}(y),\overline{\xi_j^*}(y)\big), \,\,\,\,\lim_{j\to\infty}\big(\overline{\xi_j}(z),\overline{\xi_j^*}(z)\big),\,\,\,\,\lim_{j\to\infty}\big(\overline{\xi_j}(z'),\overline{\xi_j^*}(z')\big)\]
is pairwise transverse. A continuity argument then allows us to conclude that both $\{\sigma_{\xi_j,\xi_j^*}(x,z,z',y)\}_{j=1}^\infty$ and $\{\sigma_{\xi_j,\xi_j^*}(y,z',z,x)\}_{j=1}^\infty$ converge to positive real numbers. The same argument can be used in the case when $\{x,z\}$ does not lie in $\widetilde{P}_{i,j}$ for some $j$. 

To deal with the case when both $\{x,z\}$ and $\{y,z'\}$ lie in $\widetilde{P}_{i,j}$ for some $j$, we need to slightly modify the argument given above. We leave this modification to the reader. 
\end{proof}

By a similar argument, one can also prove the following lemma.

\begin{lem}\label{neighboring invariants}
Let $\{y_1,y_2\}\in\widetilde{\Qmc}$. For $i=1,2$, let $\gamma_i\in\pi_1(S)$ be the primitive group element whose repelling fixed point is $y_i$, and let $x_i$ be the attracting fixed point of $\gamma_i$. Also, let $w_i$ be the vertex of $\widetilde{\Tmc}$ with the property that there are triangles $T_i,T_i'\in\Theta_{\widetilde{\Tmc}}$ so that $\{y_1,y_2\}$, $\{y_i,w_i\}$ are edges of $T_i$ and $\gamma_i\cdot\{y_1,y_2\}$, $\{y_i,w_i\}$ are edges of $T_i'$ (see Figure \ref{eruptionproof2}). Then the sequences 
\[\{\sigma_{\xi_j,\xi_j^*}(y_1,x_1,x_2,y_2)\}_{j=1}^\infty,\,\,\,\,\,\{\sigma_{\xi_j,\xi_j^*}(y_1,x_1,w_2,y_2)\}_{j=1}^\infty,\,\,\,\,\, \{\sigma_{\xi_j,\xi_j^*}(y_1,w_1,x_2,y_2)\}_{j=1}^\infty,\] 
converge to positive real numbers. 
\end{lem}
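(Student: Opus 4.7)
The plan is to adapt the proof of Lemma \ref{technical lemma} essentially verbatim, with the sole new feature that $\{y_1, y_2\}$ is a non-closed edge, so both endpoints are repelling fixed points of primitive elements $\gamma_1, \gamma_2 \in \pi_1(S)$ and the iterative normalization must be carried out at both ends rather than only at one.

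First I would dispose of the trivial case: if no triangle in $\Theta_i$ is incident to $\{y_1, y_2\}$ or to any $\gamma_1^k \cdot \{y_1, y_2\}$ or $\gamma_2^k \cdot \{y_1, y_2\}$, then Proposition \ref{triple ratio and cross ratio facts} implies that each of the three cross ratios is constant along the flow, and there is nothing to prove. In the nontrivial case, for each of the three sequences I would introduce a sequence of projective transformations $\overline{g}_j \in \PGL(3, \Rbbb)$ that normalizes $(\xi_j, \xi_j^*)$ so that chosen reference flags at $y_1$, at $y_2$, and a chosen auxiliary neighbor are held fixed, and set $(\overline{\xi_j}, \overline{\xi_j^*}) := \overline{g}_j \cdot (\xi_j, \xi_j^*)$. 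Since the shear parameters are projective invariants, they agree with those computed from $(\overline{\xi_j}, \overline{\xi_j^*})$.

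At each end $y_i$, I would isolate the subsequence of triangles $\gamma_i^k \cdot T_i$ and $\gamma_i^k \cdot T_i'$ for $k \geq 0$, which together account for all the triangles in $\Theta_i$ whose elementary eruption flow moves the normalized flag at the relevant auxiliary vertex ($x_i$ or $w_i$). After absorbing the additional renormalization that keeps the reference flag at $y_i$ fixed, the $k$-th such pair of elementary flows acts on $(\overline{\xi_j}, \overline{\xi_j^*})$ by $r_i^k u_{i,t} r_i^{-k}$, where $r_i$ is diagonalizable with eigenvalues of pairwise distinct moduli and repelling flag equal to the reference flag at $y_i$, and $u_{i,t}$ is unipotent fixing this same flag. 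Exactly as in Lemma \ref{technical lemma}, the infinite product $\prod_{k=0}^\infty r_i^k u_{i,t} r_i^{-k}$ converges to a unipotent transformation fixing the reference flag at $y_i$.

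Applying this analysis simultaneously at $y_1$ and at $y_2$, the limit of $\big(\overline{\xi_j}(p), \overline{\xi_j^*}(p)\big)$ at each vertex $p$ of the quadruple defining a given cross ratio is the image of the original flag under a composition of two unipotent transformations fixing the reference flags at $y_1$ and $y_2$ respectively. Since unipotent transformations fixing a given flag preserve transversality with any flag transverse to it, the limiting quadruple is pairwise transverse. By continuity of the cross ratio on pairwise transverse quadruples each sequence converges, and because every $\sigma_{\xi_j, \xi_j^*}$ is a positive real number (by the positivity of the nested polygon structure coming from the convex $\Rbbb\Pbbb^2$ structure on $S$), the limits are positive real numbers. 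The main obstacle, as in Lemma \ref{technical lemma}, is the convergence of the infinite product together with the verification that the two simultaneous normalizations at $y_1$ and $y_2$ do not interfere; this latter point follows because $r_1$ and $r_2$ have repelling flags at $y_1$ and $y_2$ respectively, so the tails of the two products act trivially on the pointwise limit at the opposite end.
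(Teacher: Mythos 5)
Your proposal is correct and follows exactly the route the paper intends: the paper gives no proof of this lemma beyond the remark that it follows ``by a similar argument'' to Lemma~\ref{technical lemma}, and your two-ended version of that argument --- an infinite product $\prod_{k} r_i^k u_{i,t} r_i^{-k}$ at each of $y_1$ and $y_2$ converging to a unipotent transformation fixing the corresponding flag, followed by transversality of the limiting quadruple and continuity of the cross ratio --- is precisely that argument. The only quibble is that the non-interference of the two normalizations is most cleanly justified by the locality of the elementary eruptions (as in Proposition~\ref{triple ratio and cross ratio facts}: the fan of triangles spiralling from $y_1$ toward $x_1$ moves none of the flags at $y_2$, $w_2$, $x_2$ relative to the reference triangle, and vice versa), rather than by the attracting/repelling dynamics of $r_1$ and $r_2$.
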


\begin{figure}[ht]
\centering
\includegraphics[scale=0.5]{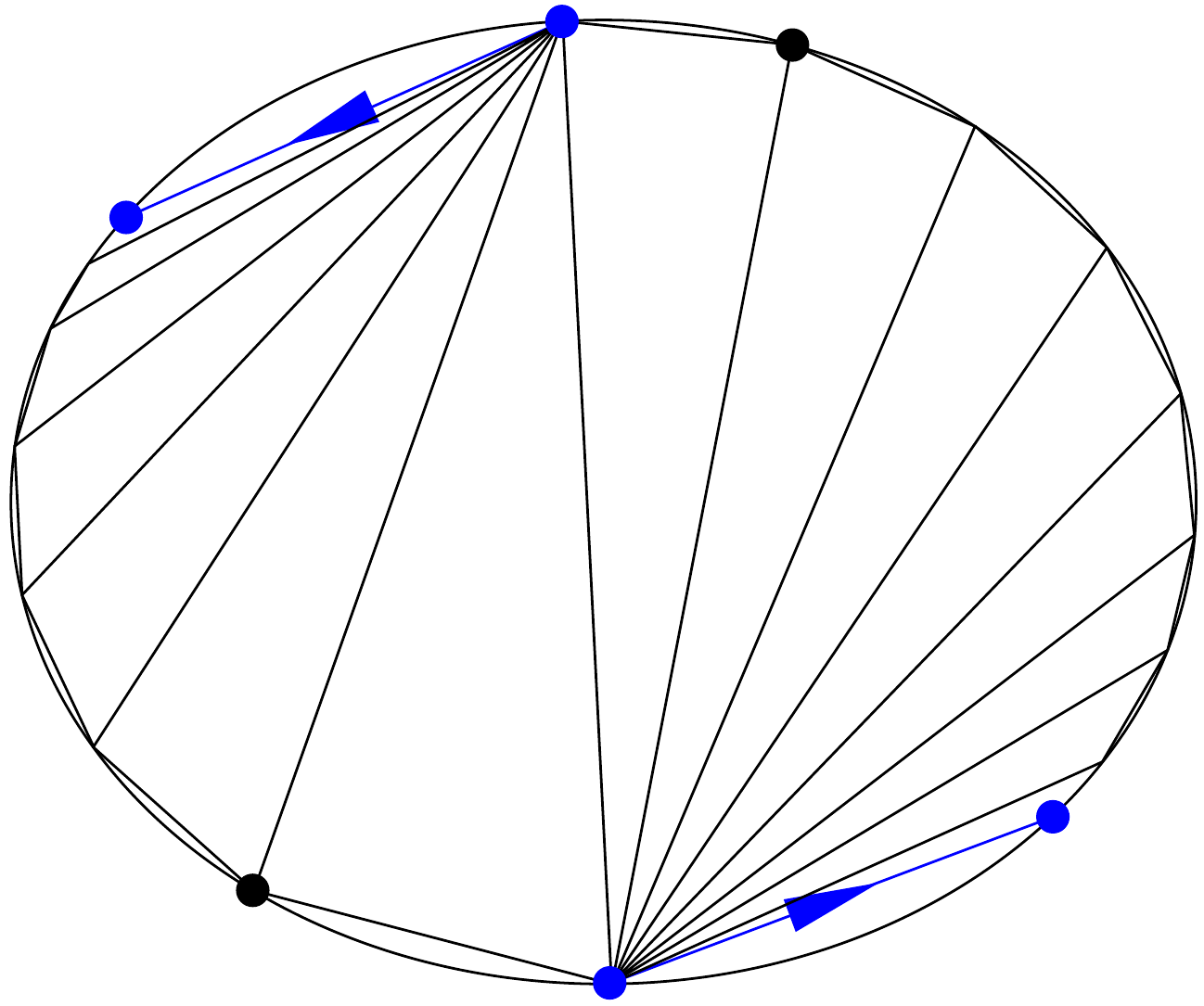}
\small
\put (-103, 159){$y_1$}
\put (-96, -4){$y_2$}
\put (-180, 122){$x_1$}
\put (-20, 28){$x_2$}
\put (-154, 10){$w_1$}
\put (-68, 154){$w_2$}
\put (-120, 47){$T_1$}
\put (-153, 54){$T_1'$}
\put (-87, 120){$T_2$}
\put (-65, 110){$T_2'$}
\caption{Lemma \ref{neighboring invariants}.}\label{eruptionproof2}
\end{figure}

By Theorem \ref{Bonahon-Dreyer}, there is some $[f_t,\Sigma_t]\in\Cmc(S)$ so that the following hold:
\begin{itemize}
\item For all $e=[x,y]\in\Qmc$ and for all $e=[x,y]\in\Pmc$ that does not correspond to a boundary component of $P_i$,
\[\sigma_{e,x}[f_t,\Sigma_t]=\sigma_{e,x}[f,\Sigma]\,\,\,\,\text{ and }\,\,\,\,\sigma_{e,y}[f_t,\Sigma_t]=\sigma_{e,y}[f,\Sigma]\]
\item For all $e=[x,y]\in\Pmc$ that correspond to a boundary component of $P_i$, 
\[\sigma_{e,x}[f_t,\Sigma_t]=\lim_{j\to\infty}\sigma_{\xi_j,\xi_j^*}(x,z_{x,y},z_{x,y}',y)\text{ and }\sigma_{e,y}[f_t,\Sigma_t]=\lim_{j\to\infty}\sigma_{\xi_j,\xi_j^*}(y,z'_{x,y},z_{x,y},x)\]
\item $\tau_T[f_t,\Sigma_t]=\tau_T[f,\Sigma]$ for all $T\in\Theta_\Tmc$ so that $T\neq [\Delta_i],[\Delta'_i]$,
\item $\tau_{[\Delta_i]}[f_t,\Sigma_t]=\tau_{[\Delta_i]}[f,\Sigma]+t\text{ and }\tau_{[\Delta_i']}[f_t,\Sigma_t]=\tau_{[\Delta_i']}[f,\Sigma]-t$.
\end{itemize}
Let $(\xi_t,\Omega_t)$ be the representative of $[\xi_t,\Omega_t]\in\PGL(3,\Rbbb)\backslash\Dmc$ so that $[\xi_t,\Omega_t]$ corresponds to $[f_t,\Sigma_t]\in\Cmc(S)$, and
\[\big(\xi_t(\gamma_{1,i}^-),\xi_t^*(\gamma_{1,i}^-)\big)=\big(\xi(\gamma_{1,i}^-),\xi^*(\gamma_{1,i}^-)\big),\,\,\,\, \big(\xi_t(\gamma_{2,i}^-),\xi_t^*(\gamma_{2,i}^-)\big)=\big(\xi(\gamma_{2,i}^-),\xi^*(\gamma_{2,i}^-)\big),\]
\[\text{and }\,\,\,\,\xi_t(\gamma_{3,i}^-)=\xi(\gamma_{3,i}^-).\]

\begin{lem}\label{convergence of vertices}
Let $x_0\in\partial\pi_1(S)$ be any vertex of $\widetilde{\Tmc}$. Then 
\[\lim_{j\to\infty}\big(\xi_j(x_0),\xi_j^*(x_0)\big)=\big(\xi_t(x_0),\xi_t^*(x_0)\big).\]
\end{lem}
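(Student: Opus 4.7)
The plan is to reduce to a finite sub-configuration of flags and invoke Proposition \ref{finite convergence}. The first step is to construct a finite set of vertices $V=\{v_1,\dots,v_n\}$ of $\widetilde{\Tmc}$, cyclically ordered along $S^1=\partial\pi_1(S)$, that contains $x_0$ together with the three anchors $\gamma_{1,i}^-,\gamma_{2,i}^-,\gamma_{3,i}^-$, and whose cyclic polygon is triangulated by edges of $\widetilde{\Tmc}$ and triangles of $\Theta_{\widetilde{\Tmc}}$. The combinatorial machinery of $\overline{\Emc}_{x_0,y_0}$ and $\Theta_{x_0,y_0}$ from the section on families of transverse curves provides such a $V$: iterating this construction across the pairs in $\{x_0,\gamma_{1,i}^-,\gamma_{2,i}^-,\gamma_{3,i}^-\}$ yields a finite collection of edges and triangles of $\widetilde{\Tmc}$, and I would take $V$ to be the finite set of their endpoints.

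With $V$ fixed, the restriction of $(\xi_j,\xi_j^*)$ to $V$ defines a sequence in $\Fmc_n^+$, and I would verify the hypotheses of Proposition \ref{finite convergence} for this sequence. The normalization of $(\xi_j,\Omega_j)$ holds the flags at $\gamma_{1,i}^-$ and $\gamma_{2,i}^-$ and the point $\xi_j(\gamma_{3,i}^-)$ fixed for all $j$, supplying the convergence and transversality hypotheses at three vertices. Lemmas \ref{technical lemma} and \ref{neighboring invariants}, together with the constancy statements established just before Lemma \ref{technical lemma}, cover every shear parameter associated to an internal edge and every triangle parameter of the induced triangulation, guaranteeing each converges to a positive real number. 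Proposition \ref{finite convergence} then yields convergence of the restricted tuples to some limit in $\Fmc_n^+$.

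To identify this limit with $\big(\xi_t(v_k),\xi_t^*(v_k)\big)_{k=1}^n$, observe that by the construction of $[f_t,\Sigma_t]$ via the Bonahon--Dreyer coordinates (Theorem \ref{Bonahon-Dreyer}), the tuple $\big(\xi_t(v_k),\xi_t^*(v_k)\big)_{k=1}^n$ has exactly the same shear and triangle parameters as the limit tuple and satisfies the same three-vertex normalization. By the injectivity portion of Proposition \ref{finite parameterization} the two $n$-tuples then agree in $\Fmc_n^+$, and in particular their flags at $x_0\in V$ coincide, which is the assertion of the lemma. The step I expect to be the main obstacle is the combinatorial construction of $V$: one must choose $V$ so that every edge of the induced triangulation is one of those handled by Lemma \ref{technical lemma}, Lemma \ref{neighboring invariants}, or the constancy statements, and similarly for triangles, while simultaneously ensuring that $V$ can be kept finite even when the triangulated path from the anchors to $x_0$ winds through many pants of the decomposition.
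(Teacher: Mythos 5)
Your proposal is correct and follows essentially the same route as the paper: reduce to a finite configuration of flags via the $\overline{\Emc}_{x_0,y_0}$ machinery, feed the convergence of the shear and triangle parameters (from Lemma \ref{technical lemma}, Lemma \ref{neighboring invariants}, and the constancy observations) together with the three-point normalization into Proposition \ref{finite convergence}, and identify the limit through the Bonahon--Dreyer/Fock--Goncharov parameters. The only difference is that the paper sidesteps your anticipated combinatorial obstacle by using a single finite set $\overline{\Emc}_{x_0,\gamma_{3,i}^-}$, after arranging (WLOG) that $x_0\in(\gamma_{1,i}^-,\gamma_{2,i}^-)_{\gamma_{3,i}^-}$, rather than iterating over all pairs of anchors.
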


\begin{proof}
First, one can compute that the following equalities hold:
\begin{itemize}
\item Let $x,y,z,z'$ be as defined in the statement of Lemma \ref{technical lemma}. Then
\[\lim_{j\to\infty}\tau_{\xi_j,\xi_j^*}(x,z,y)=\tau_{\xi_t,\xi_t^*}(x,z,y),\]
\[\lim_{j\to\infty}\tau_{\xi_j,\xi_j^*}(y,z',x)=\tau_{\xi_t,\xi_t^*}(y,z',x).\]
\item Let $x_1,x_2,y_1,y_2,w_2,w_2$ be as defined in the statement of Lemma \ref{neighboring invariants}. Then
\[\lim_{j\to\infty}\sigma_{\xi_j,\xi_j^*}(y_1,x_1,x_2,y_2)=\sigma_{\xi_t,\xi_t^*}(y_1,x_1,x_2,y_2),\]
\[\lim_{j\to\infty}\sigma_{\xi_j,\xi_j^*}(y_1,x_1,w_2,y_2)=\sigma_{\xi_t,\xi_t^*}(y_1,x_1,w_2,y_2),\]
\[\lim_{j\to\infty}\sigma_{\xi_j,\xi_j^*}(y_1,w_1,x_2,y_2)=\sigma_{\xi_t,\xi_t^*}(y_1,w_1,x_2,y_2).\]
\end{itemize}

If $x_0=\gamma_{1,i}^-$ or $x_0=\gamma_{2,i}^-$, then by definition of $\xi_j$ and $\xi_t$, $\lim_{j\to\infty}\big(\xi_j(x_0),\xi_j^*(x_0)\big)=\big(\xi_t(x_0),\xi_t^*(x_0)\big)$. For the same reason, if $x_0=\gamma_{3,i}^-$, then $\lim_{j\to\infty}\xi_j(x_0)=\xi_t(x_0)$. On the other hand, $\lim_{j\to\infty}\xi_j^*(x_0)=\xi_t^*(x_0)$ because $\lim_{j\to\infty}\tau_{\xi_j,\xi_j^*}(\gamma_{1,i}^-,\gamma_{3,i}^-,\gamma_{2,i}^-)=\tau_{\xi_t,\xi_t^*}(\gamma_{1,i}^-,\gamma_{3,i}^-,\gamma_{2,i}^-)$.

Now, suppose that $x_0$ is not any of $\gamma_{1,i}^-,\gamma_{2,i}^-,\gamma_{3,i}^-$. Assume without loss of generality that $x_0\in(\gamma_{1,i}^-,\gamma_{2,i}^-)_{\gamma_{3,i}^-}$ (see Notation \ref{interval notation}), then $\overline{\Emc}_{x_0,\gamma_{3,i}^-}$ is finite and non-empty. ($\overline{\Emc}_{x_0,\gamma_{3,i}^-}$ was defined using $\widetilde{\Tmc}$ in Section \ref{ideal triangulation}.) By Lemma \ref{neighboring invariants} and the observation stated prior to Lemma \ref{technical lemma}, we know that for every triple of consecutive edges $\{z',y\}$, $\{x,y\}$, $\{z,x\}$ of $\overline{\Emc}_{x_0,\gamma_{3,i}^-}$ in this order,
\[\lim_{j\to\infty}\sigma_{\xi_j,\xi_j^*}(x,z,z',y)=\sigma_{\xi_t,\xi_t^*}(x,z,z',y),\,\,\,\,\,\,\lim_{j\to\infty}\sigma_{\xi_j,\xi_j^*}(y,z',z,x)=\sigma_{\xi_t,\xi_t^*}(y,z',z,x),\]
\[\lim_{j\to\infty}\tau_{\xi_j,\xi_j^*}(x,y,z)=\sigma_{\xi_t,\xi_t^*}(x,y,z),\,\,\,\,\,\,\lim_{j\to\infty}\tau_{\xi_j,\xi_j^*}(y,z',x)=\sigma_{\xi_t,\xi_t^*}(y,z',x).\]
We can then apply Proposition \ref{finite convergence} to conclude that $\lim_{j\to\infty}\big(\xi_j(x_0),\xi_j^*(x_0)\big)=\big(\xi_t(x_0),\xi_t^*(x_0)\big)$.
\end{proof}

\begin{proof}[Proof of Theorem \ref{main theorem}]
By a standard density argument, Lemma \ref{convergence of vertices} implies that for any $x\in\partial\pi_1(S)$, 
\[\lim_{j\to\infty}\big(\xi_j(x),\xi_j^*(x)\big)=\big(\xi_t(x),\xi_t^*(x)\big).\]
Hence, $(E_i)_t[f,\Sigma]=[f_t,\Sigma_t]$. Since $[f,\Sigma]\in\Cmc(S)$ was arbitrary, this proves that $(E_i)_t$ is well-defined. From the proof of Lemma \ref{technical lemma}, it is clear that 
\[\lim_{j\to\infty}\sigma_{\xi_j,\xi_j^*}(x,z_{x,y},z_{x,y}',y)\,\,\,\,\text{ and }\,\,\,\,\lim_{j\to\infty}\sigma_{\xi_j,\xi_j^*}(y,z'_{x,y},z_{x,y},x)\] 
vary smoothly with $t$. Thus, by the way we chose $[f_t,\Sigma_t]$, the Bonahon-Dreyer parameters for $[f_t,\Sigma_t]$ vary smoothly with $t$. The smoothness of $(E_i)_t$ then follows from the smoothness of the Bonahon-Dreyer parameterization of $\Cmc(S)$. Finally, to see the independence of $(E_i)_t$ from the choices made to obtain the enumeration of $\Theta_i$, simply observe that the Bonahon-Dreyer parameters for $[f_t,\Sigma_t]$ does not depend on any such choice. This finishes the proof of Theorem \ref{main theorem}.
\end{proof}

%%%%%%%%%%%%%%%%%%%%%%%%%%%%%%%%%%%%%%%%%%%%%%%%%%
\subsection{Internal bulging flows} 
%%%%%%%%%%%%%%%%%%%%%%%%%%%%%%%%%%%%%%%%%%%%%%%%%%
Now we define the internal bulging flows on $\Cmc(S)$, which arise from bulging flows along (internal) edges of triangulations $\Tmc$, which do not correspond to simple closed curves in the pants decomposition $\Pmc$. 

Orient the edges $e_{1,i}$, $e_{2,i}$, $e_{3,i}$ so that they have backward endpoints $\gamma_{1,i}^-$, $\gamma_{2,i}^-$, $\gamma_{3,i}^-$ and forward endpoints $\gamma_{2,i}^-$, $\gamma_{3,i}^-$, $\gamma_{1,i}^-$ respectively. ($e_{1,i}$, $e_{2,i}$, $e_{3,i}$ were defined at the beginning of Section~\ref{deform_convex}.) This induces an orientation on all the edges in $\widetilde{\Qmc}_i$. For each $e_j\in\widetilde{\Qmc}_i$, let $e_j^+$ and $e_j^-$ denote its forward and backward endpoints respectively. For any $t\in\Rbbb$ and any $j\in\Zbbb^+$, let 
\[(\beta_j)_t:=(\beta_{e_j^+,e_j^-})_t:\PGL(3,\Rbbb)\backslash\Dmc\to\PGL(3,\Rbbb)\backslash\Dmc.\]
Then define $(I_i)_t:=\prod_{j=1}^\infty\beta_j(t):\Cmc(S)\to\Cmc(S)$. 

\begin{definition}
The flow $(I_i)_t$ on $\Cmc(S)$ defined above is the \emph{internal bulging flow} associated to the pair of pants $P_i\subset S$
\end{definition}

An analogous argument as the one given in the proof of Theorem \ref{main theorem} gives the following statement. 

\begin{thm}\label{shear}
For all $i=1,\dots,2g-2$, $(I_i)_t$ is well-defined, smooth, and does not depend on any of the choices we made to obtain the enumeration of $\widetilde{\Qmc}_i$.  
\end{thm}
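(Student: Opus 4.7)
The plan is to mimic closely the proof of Theorem \ref{main theorem}, replacing elementary eruption flows by elementary bulging flows and the collection of triangles $\Theta_i$ by the collection of edges $\widetilde{\Qmc}_i$. Fix a representative $(\xi,\Omega)$ of $[f,\Sigma]\in\Cmc(S)$ and set $(\xi_j',\Omega_j'):=\prod_{k=1}^j(\beta_k)_t(\xi,\Omega)$. Normalize by an element $g_j\in\PGL(3,\Rbbb)$ that sends the two flags at $\xi_j'(\gamma_{1,i}^-),\xi_j'(\gamma_{2,i}^-)$ and the point $\xi_j'(\gamma_{3,i}^-)$ back to $\big(\xi(\gamma_{1,i}^-),\xi^*(\gamma_{1,i}^-)\big),\big(\xi(\gamma_{2,i}^-),\xi^*(\gamma_{2,i}^-)\big)$ and $\xi(\gamma_{3,i}^-)$ respectively, and write $(\xi_j,\Omega_j):=g_j\cdot(\xi_j',\Omega_j')$. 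As in the proof of Theorem \ref{main theorem}, the class $[\xi_j,\Omega_j]$ equals the partial composition in $\PGL(3,\Rbbb)\backslash\Dmc$, and showing $(I_i)_t[f,\Sigma]$ is well-defined amounts to showing the sequence $\{(\xi_j,\xi_j^*)\}$ converges pointwise on $\partial\pi_1(S)$.

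Next, I would track how the Bonahon--Dreyer shear and triangle parameters evolve under a single elementary bulging flow. By Proposition \ref{cross ratio facts}(2) (applied to the bulging case) together with the unnumbered proposition giving $\sigma_{\xi_2}(y_1,z_L,z_R,y_2)=\sigma_\xi(y_1,z_L,z_R,y_2)+t$ and its companion equation, bulging along an edge $e$ only alters the two shear parameters of $e$ itself, each by $\pm t$. By Proposition \ref{triple ratio and cross ratio facts}(2), triangle parameters of triangles not having $e$ as an edge are preserved. Consequently, for any edge or triangle of $\widetilde\Tmc$ not in the $P_i$-orbit, the associated Bonahon--Dreyer parameter stabilizes after finitely many steps and equals its initial value; for each edge in $\widetilde{\Qmc}_i$ the shear parameters stabilize at their initial values shifted by $\pm t$.

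The core technical task is the analog of Lemma \ref{technical lemma}: for every closed leaf $\{x,y\}\in\widetilde{\Pmc}$ corresponding to a boundary curve of $P_i$, and for every adjacent pair of vertices $z,z'$ as in that lemma, the four sequences
\[\sigma_{\xi_j,\xi_j^*}(x,z,z',y),\quad \sigma_{\xi_j,\xi_j^*}(y,z',z,x),\quad \tau_{\xi_j,\xi_j^*}(x,z,y),\quad \tau_{\xi_j,\xi_j^*}(y,z',x)\]
converge to positive reals. The argument proceeds exactly as in Lemma \ref{technical lemma}: enumerate the sequence of bulging edges in $\widetilde{\Qmc}_i$ that share the vertex $x$, normalize at each step so that three reference flags stay fixed, and isolate the effect of each new bulging as conjugation by $u_t:=g^{-1}b_{x,z}(-t)g\,b_{x,z}(t)$, where $g$ is the $\PGL(3,\Rbbb)$ transition map between two consecutive bulging sites in the $\gamma$-orbit. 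A direct matrix check in the basis $\{v_x,v_z,v_w\}$ shows that $u_t$ is unipotent and fixes the flag $(\xi(x),\xi^*(x))$. The holonomy $r$ of the boundary curve is, by Theorem \ref{properties of convex projective surfaces}, diagonalizable with three pairwise distinct positive eigenvalues and repelling flag equal to $(\xi(x),\xi^*(x))$, so the iterated product $\prod_{k=0}^\infty r^k u_t r^{-k}$ converges to a unipotent element fixing this flag, exactly as computed in Lemma \ref{technical lemma}. Applying this to the sequences for both endpoints $x$ and $y$ and invoking Proposition \ref{finite convergence} (together with the convergence/non-degeneracy arguments of Lemma \ref{technical lemma}) yields the claimed positive limits.

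Once the technical lemma is in hand, the finish is short. By Theorem \ref{Bonahon-Dreyer} there is a unique $[f_t,\Sigma_t]\in\Cmc(S)$ whose Bonahon--Dreyer parameters are the limits above for the shears on closed leaves bordering $P_i$, shifted by $\pm t$ for edges in $\widetilde{\Qmc}_i$, and unchanged elsewhere. An analog of Lemma \ref{convergence of vertices} plus a density argument gives pointwise convergence $(\xi_j,\xi_j^*)\to(\xi_t,\xi_t^*)$ on $\partial\pi_1(S)$, so $(I_i)_t[f,\Sigma]=[f_t,\Sigma_t]$ is well-defined. Smoothness in $t$ follows from the explicit formula for the limiting unipotent product $\prod_k r^k u_t r^{-k}$ (which is smooth in $t$) combined with the smoothness of the Bonahon--Dreyer parameterization. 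Independence from the enumeration of $\widetilde{\Qmc}_i$ is automatic, since $[f_t,\Sigma_t]$ is uniquely characterized by Bonahon--Dreyer coordinates that do not depend on the order of composition. The main obstacle is verifying the unipotence and appropriate fixed flag of $u_t$ (which is slightly more delicate than in the eruption case because $b_{x,z}(t)$ is semisimple rather than conjugation-symmetric like the eruption matrices $g_i(t)$); once this matrix computation is completed, the rest of the proof transfers verbatim from Theorem \ref{main theorem}.
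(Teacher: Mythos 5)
Your proposal follows exactly the route the paper takes: the paper's own proof of this theorem consists of the single remark that ``an analogous argument as the one given in the proof of Theorem \ref{main theorem}'' applies, and the comment immediately after the theorem singles out precisely the point you identify as the crux, namely that the analogue of $u_t$ built from the bulging matrices $b(\pm t)$ is unipotent and fixes the flag $\big(\xi(x),\xi^*(x)\big)$ (this holds because $b_{x,y}(t)$ has equal eigenvalues at the two endpoints of the edge and the two edges crossed per period of $\gamma$ are bulged with opposite signs as seen from $y$, which is exactly why bulging, but not shearing, along the edges of $\widetilde{\Qmc}_i$ yields a well-defined flow). Your adaptation of Lemma \ref{technical lemma}, the bookkeeping of the Bonahon--Dreyer parameters, and the concluding appeal to Theorem \ref{Bonahon-Dreyer} together with the analogue of Lemma \ref{convergence of vertices} all match the intended argument.
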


It is important to remark here that if we try to use the elementary shearing flow $\psi_{e_j^+,e_j^-}$ in place of $\beta_{e_j^+,e_j^-}$, then we do not get a well-defined flow on $\Cmc(S)$. The key point here is that if we do so, then the analogous projective transformation to $u_t$ defined in the proof of Lemma \ref{technical lemma} will not be unipotent.

\bibliographystyle{amsalpha}
\bibliography{ref}

\providecommand{\bysame}{\leavevmode\hbox to3em{\hrulefill}\thinspace}
\providecommand{\MR}{\relax\ifhmode\unskip\space\fi MR }
% \MRhref is called by the amsart/book/proc definition of \MR.
\providecommand{\MRhref}[2]{%
  \href{http://www.ams.org/mathscinet-getitem?mr=#1}{#2}
}
\providecommand{\href}[2]{#2}
\begin{thebibliography}{Zha15b}

\bibitem[BD]{BonahonDreyer2}
Francis Bonahon and Guillaume Dreyer, \emph{Hitchin characters and geodesic
  laminations}, Preprint https://arxiv.org/abs/1410.0729.

\bibitem[BD14]{BonahonDreyer1}
\bysame, \emph{Parameterizing {H}itchin components}, Duke Math. J. \textbf{163}
  (2014), no.~15, 2935--2975. \MR{3285861}

\bibitem[Ben60]{Benzecri}
Jean-Paul Benz\'ecri, \emph{Sur les vari\'et\'es localement affines et
  localement projectives}, Bull. Soc. Math. France \textbf{88} (1960),
  229--332. \MR{0124005}

\bibitem[BK]{BonahonKim}
Francis Bonahon and Inkang Kim, \emph{The goldman and fock-goncharov
  coordinates for convex projective structures on surfaces}, Preprint
  https://arxiv.org/abs/1607.03650.

\bibitem[CG93]{ChoiGoldman}
Suhyoung Choi and William~M. Goldman, \emph{Convex real projective structures
  on closed surfaces are closed}, Proc. Amer. Math. Soc. \textbf{118} (1993),
  no.~2, 657--661. \MR{1145415}

\bibitem[FG06]{FockGoncharov}
Vladimir Fock and Alexander Goncharov, \emph{Moduli spaces of local systems and
  higher {T}eichm\"uller theory}, Publ. Math. Inst. Hautes \'Etudes Sci.
  (2006), no.~103, 1--211. \MR{2233852}

\bibitem[FG07]{FockGoncharov_convex}
Vladimir~V. Fock and Alexander~B. Goncharov, \emph{Moduli spaces of convex
  projective structures on surfaces}, Adv. Math. \textbf{208} (2007), no.~1,
  249--273.

\bibitem[Gol]{Goldman_bulging}
William~M. Goldman, \emph{Bulging deformations of convex
  $\mathbb{R}\mathbb{P}^2$-manifolds}, Preprint
  https://arxiv.org/pdf/1302.0777v1.pdf.

\bibitem[Gol86]{Goldman_twist}
\bysame, \emph{Invariant functions on {L}ie groups and {H}amiltonian flows of
  surface group representations}, Invent. Math. \textbf{85} (1986), no.~2,
  263--302. \MR{846929}

\bibitem[Gol90]{Goldman_convex}
\bysame, \emph{Convex real projective structures on compact surfaces}, J.
  Differential Geom. \textbf{31} (1990), no.~3, 791--845. \MR{1053346}

\bibitem[Hit92]{Hitchin}
Nigel~J. Hitchin, \emph{Lie groups and {T}eichm\"uller space}, Topology
  \textbf{31} (1992), no.~3, 449--473.

\bibitem[Ker83]{Kerckhoff}
Steven~P. Kerckhoff, \emph{The {N}ielsen realization problem}, Ann. of Math.
  (2) \textbf{117} (1983), no.~2, 235--265. \MR{690845}

\bibitem[Kui54]{Kuiper}
N.~H. Kuiper, \emph{On convex locally-projective spaces}, Convegno
  {I}nternazionale di {G}eometria {D}ifferenziale, {I}talia, 1953, Edizioni
  Cremonese, Roma, 1954, pp.~200--213. \MR{0063115}

\bibitem[Lab07]{Labourie_convex}
Fran\c{c}ois Labourie, \emph{Flat projective structures on surfaces and cubic
  holomorphic differentials}, Pure Appl. Math. Q. \textbf{3} (2007), no.~4,
  Special Issue: In honor of Grigory Margulis. Part 1, 1057--1099. \MR{2402597}

\bibitem[Li16]{Li}
Qiongling Li, \emph{Teichm\"uller space is totally geodesic in {G}oldman
  space}, Asian J. Math. \textbf{20} (2016), no.~1, 21--46. \MR{3460757}

\bibitem[Lof01]{Loftin_convex}
John~C. Loftin, \emph{Affine spheres and convex {$\Bbb{RP}^n$}-manifolds},
  Amer. J. Math. \textbf{123} (2001), no.~2, 255--274. \MR{1828223}

\bibitem[Thu86]{Thurston_earthquake}
William~P. Thurston, \emph{Earthquakes in two-dimensional hyperbolic geometry},
  Low-dimensional topology and {K}leinian groups ({C}oventry/{D}urham, 1984),
  London Math. Soc. Lecture Note Ser., vol. 112, Cambridge Univ. Press,
  Cambridge, 1986, pp.~91--112. \MR{903860}

\bibitem[Zha15a]{Zhang_convex}
Tengren Zhang, \emph{The degeneration of convex {$\Bbb{RP}^2$} structures on
  surfaces}, Proceedings of the London Mathematical Society \textbf{111}
  (2015), no.~5, 967--1012.

\bibitem[Zha15b]{Zhang_internal}
\bysame, \emph{Degeneration of {H}itchin representations along internal
  sequences}, Geom. Funct. Anal. \textbf{25} (2015), no.~5, 1588--1645.
  \MR{3426063}

\end{thebibliography}

\end{document}